\newtheorem{thm}[subsection]{Theorem}
\newtheorem{thm/def}[subsection]{Theorem/Definition}
\newtheorem{lem}[subsection]{Lemma}
\newtheorem{prop}[subsection]{Proposition}
\theoremstyle{definition}
\theoremstyle{definition}
\theoremstyle{definition}
\newtheorem{rem}[subsection]{Remark}
\newtheorem{example}[subsection]{Example}
\numberwithin{equation}{subsection}
\newtheorem{pg}[subsection]{}
\crefname{pg}{}{}
\newcommand{\scr}[1]{\mathbf{\EuScript{#1}}}
\newcommand{\Sp}{\text{\rm Spec}}
\newcommand{\mls}{\mathscr}
\newcommand{\perf}{\mathrm{perf}}
\newcommand{\lotimes}{\otimes ^{\mathbf{L}}}
\newcommand{\Perf}{\mls D_\perf}
\newcommand{\uPerf}{\underline {\mls D}_{\mathrm{perf}}}
\newcommand{\mdet}{\mathrm{det}}
\newcommand{\zar}{\mathrm{zar}}
\newcommand{\Def}{\mathrm{Def}}
\newcommand{\SPcite}[1]{\cite[\href{https://stacks.math.columbia.edu/tag/#1}{Tag #1}]{stacks-project}}
\newcommand{\dg}{\mathrm{dg}}
\newcommand{\cof}{\mathrm{cof}}
\newcommand{\DK}{\mathrm{DK}}
\newcommand{\op}{\mathrm{op}}
\newcommand{\strict}{\mathrm{strict}}
\newcommand{\simto}{\stackrel{\sim}{\to}}
\newcommand{\BaseSite}{\mathbf{S}}
   \def\MR#1{}
\begin{document}

\title{Deformation theory of perfect complexes and traces}
\author{Max Lieblich and Martin Olsson}

\begin{abstract}
    We show that the deformation theory of a perfect complex   and that of its determinant are related by the trace map,  in a general setting of sheaves on a site.  The key technical step, in passing from the setting of modules over a ring where one has global resolutions to the general setting, is achieved using $K$-theory and higher category theory.
\end{abstract}

\maketitle

\section{Introduction}

\begin{pg}\label{P:slogan}
Our goal in this article is to prove the following piece of folklore.

\begin{thm}[Folk theorem, informally stated]\label{thm:folk}
  If $E$ is a perfect complex on an algebraic stack $X$ with a first-order thickening $X\hookrightarrow X'$, then the trace of the obstruction class of $E$, with respect to the thickening, is the obstruction class for the determinant $\det E$ of $E$. Moreover, the trace map from the torsor of deformations of $E$ to the torsor of deformations of $\det E$ coincides with the determinant map.
\end{thm}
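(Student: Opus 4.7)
The strategy is the two-stage approach foreshadowed by the abstract: first establish the theorem by explicit calculation in the setting where $E$ admits a global resolution by finite free modules, then upgrade to sheaves on an arbitrary site via $K$-theoretic and higher-categorical tools.

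For the local case, I would present $E$ on $X = \Sp(A)$ as a bounded complex $P^\bullet$ of finitely generated free $A$-modules; take $X' = \Sp(A')$ a square-zero thickening with ideal $I$; and lift each $P^n$ freely to $\tilde P^n$ over $A'$. Choosing arbitrary $A'$-linear lifts $\tilde d^n$ of the differentials, the failure $\tilde d\circ \tilde d$ lies in $\mathrm{Hom}(\tilde P^\bullet, \tilde P^{\bullet+2}) \otimes_{A'} I$, and its class in $\mathrm{Ext}^2_A(E, E\lotimes I)$ is the obstruction $\mathrm{ob}(E)$. The determinant $\det E = \bigotimes_n (\det P^n)^{\otimes (-1)^n}$ is a line bundle whose obstruction class is computed by the termwise determinants of the $\tilde P^n$; a direct alternating-sum calculation then identifies this class with $\mathrm{tr}(\mathrm{ob}(E))$. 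The analogous statement at the level of torsors of deformations — matching the determinant of a deformation of $E$ with its image under the trace map on deformation torsors — follows from the same explicit formulas applied to the lifted differentials.

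For the globalization, the essential difficulty is that on a general site there need not be any global resolution of $E$ by vector bundles, so no direct cocycle formula is available. The plan is to realize $\Def(E)$ and $\Def(\det E)$ as sections of sheaves of $\infty$-groupoids on the site; to realize $\det$ as a symmetric monoidal functor on $\Perf$ factoring through the Waldhausen $K$-theory of perfect complexes; and to realize $\mathrm{tr}$ similarly as a natural transformation of $K$-theoretic invariants, extending the classical trace on vector bundles. In this framework both the determinant map and the trace map on deformation spaces are defined on the nose as morphisms of sheaves, and the required agreement becomes a comparison of two natural transformations of $K$-theoretic functors on $\Perf$. Such a comparison can be checked locally on the site, where $E$ does admit a free resolution, at which point it reduces to the calculation of the first step.

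I expect the main obstacle to be precisely this globalization, and in particular the construction of the trace map on $\Def(E)$ as a well-defined natural transformation without recourse to an explicit cocycle. Once the $\infty$-categorical and $K$-theoretic package is in place, so that $\det$ and $\mathrm{tr}$ extend canonically from vector bundles to $\Perf$ in the generality of a site, the identification of the two morphisms $\Def(E) \to \Def(\det E)$ follows formally from Step 1 by descent along a hypercover trivializing $E$.
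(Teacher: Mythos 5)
Your high-level architecture matches the paper's abstract — a reduction to modules with global resolutions, packaged through $K$-theory and higher-categorical globalization — but the actual mechanism of Step~2 is underspecified at exactly the point where the real work lies, and you correctly flag it ("I expect the main obstacle to be precisely this globalization") without resolving it. The paper's route is genuinely different in how it overcomes this. The determinant is not glued from local resolution-based formulas: it is constructed \emph{a priori} as a map of sheaves of grouplike $\mathbf{E}_\infty$-monoids, $\det:K(\BaseSite,\mls O)\to\mls Pic^{\mathbf Z}_{(\BaseSite,\mls O)}$, built from the universal property of group completion applied to $\coprod_n BGL_n$, after sheafifying on an auxiliary site $\BaseSite^{\zar}$ that remembers the Zariski topologies of the $\Sp(\mls O(U))$. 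The obstruction class is likewise not assembled from termwise lifts of a global resolution: it is defined via a simplicial pushout $\widetilde{\mls O}=\mls O'\amalg_K I_\bullet$ where $K\hookrightarrow I_\bullet$ is a choice of two-term injective resolution, using the fact that $\Perf(\BaseSite,\widetilde{\mls O})\simeq\Perf(\BaseSite,\mls O)$. The key compatibility (the paper's \cref{P:4.6.1}) then holds because both the determinant-induced map $t$ and the trace-induced map $t'$ from $K(\BaseSite,\mls O[I_\bullet])$ to $R\Gamma^\Delta B(I_\bullet)$ are maps of grouplike $\mathbf E_\infty$-monoids, and by the universal property of group completion they are determined by their restrictions along $\coprod_n BGL_n$, where they visibly coincide.

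The concrete gap in your plan is the phrase "descent along a hypercover trivializing $E$." On a general ringed site, $E$ is not locally trivial, only locally strictly perfect, and more importantly there is no a priori reason that the trace map you want to compare against $\det$ is a morphism of sheaves until you have constructed it as one. You cannot "check locally" the agreement of two morphisms $\Def(E)\to\Def(\det E)$ before both sides of the comparison have been realized as global sections of morphisms of sheaves, which is precisely the part your proposal defers. Your Step~1 (the explicit termwise lift and alternating-sum calculation) is fine, and is closely related to the alternate argument in \cref{S:section10}, but it plays a smaller role in the paper's proof than you expect: it is used only to verify agreement on $BGL_n$, after the global framework has already reduced the problem to that case. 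So the proposal is a reasonable sketch, but the missing ideas — defining $\det$ globally via group completion rather than resolutions, defining $\omega(E)$ via the simplicial pushout $\widetilde{\mls O}$, and deducing the comparison from the universal property of group completion — are the substance of the theorem.
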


This is a geometric generalization of something familiar from a first multivariable analysis course: the derivative of the determinant map on the space of matrices is the trace function.

In our earlier work \cite{HLT, LO, LOstronger}, we needed this result in a level of generality not explicitly available in the literature. Indeed, as we discuss in \cref{P:history}, there are many incarnations of \cref{thm:folk}, in the context of both classical and derived algebraic geometry. However, no source of which we are aware treats the crucial case of a  scheme over a mixed characteristic base ring, or a gerbe over such a scheme. Some of the classical arguments written over the complex numbers generalize easily to our needs, while others do not. Moreover, the literature discussing this result in derived algebraic geometry  generally starts with a blanket characteristic $0$ assumption, and it is not apparent to us which results generalize as written. At the very least, the literature as written is inadequate for the applications that presently exist.

In this article we prove this folk theorem for deformations of perfect complexes  in a ringed topos, which suffices for all applications of which we know.
\end{pg}

\begin{pg}\label{P:1.1}
Let $\BaseSite$ be a site and let $\mls O'\rightarrow \mls O$ be a surjective morphism of rings on $\BaseSite$ with square-zero kernel $K$.

Let $E\in D^b(\mls O)$ be a perfect complex of $\mls O$-modules on $\BaseSite$  \cite[Tag 08G4]{stacks-project}.  A deformation of $E$ to $\mls O'$ is a pair $(E', \sigma )$, where $E'\in D^-(\mls O')$ is a complex and $\sigma :E'\lotimes _{\mls O'}\mls O\simto E$ is an isomorphism in $D(\mls O)$.  Such a complex $E'$ is automatically perfect (see for example \cite[3.2.4]{deformingkernels}).  It is well-known, and documented in various levels of generality in the literature, that then the following hold.
\begin{enumerate}
    \item [(i)] There is an obstruction $\omega (E)\in \mathrm{Ext}^2_{\mls O}(E, E\lotimes _{\mls O}K)$ which vanishes if and only if there exists a deformation of $E$ to $\mls O'$.
    \item [(ii)] If $\omega (E) = 0$ then the set of deformations of $E$ to $\mls O'$ form a torsor under $\mathrm{Ext}^1_{\mls O}(E, E\lotimes _{\mls O}K)$.  We denote this action by
    $$
    (E', \sigma )\mapsto \alpha *(E', \sigma )
    $$
    for a deformation $(E', \sigma )$ of $E$ and a class $\alpha \in \mathrm{Ext}^1_{\mls O}(E, E\lotimes _{\mls O}K)$.
    \item [(iii)] If furthermore $\mathrm{Ext}^{-1}_{\mls O}(E, E)=0$  then the set of automorphisms of any deformation of $E$ to ${\mls O}'$ is canonically isomorphic to $\mathrm{Ext}^0_{\mls O}(E, E\lotimes _{\mls O}K).$ 
\end{enumerate}

The purpose of this article is to elucidate the compatibility of these three facts with traces. In the course of the article we will also review the construction of the obstruction $\omega (E)$ in the required degree of generality, as well as points (ii) and (iii).  We will furthermore explain how to modify (iii) in the case when the vanishing of negative $\mathrm{Ext}$-groups does not hold. 
\end{pg}

\begin{rem}
One can generalize the definition of the obstruction to bounded above complexes, which are not necessarily perfect.  This can be done using the construction of Gabber, discussed in subsection \ref{S:section9}, or in the more general context of spectral algebraic geometry as discussed in \cite[\S 16.2]{LurieSAG}.
\end{rem}

\begin{pg} 
For a perfect complex $E$ as above we can consider its determinant $\det (E)$, which is an invertible ${\mls O}$-module.  This is again documented in various level of generality in the literature; for example in \cite{Knudsen}. We explain in this article how to define the determinant in our general setting.  On the other hand we can also consider the trace map
$$
\mathrm{tr}:\mathrm{Ext}^i_{\mls O}(E, E\lotimes _{\mls O}K)\rightarrow H^i(\BaseSite, K),
$$
defined in \cite[V, 3.7.3]{Illusie}.  The main result of this article is the compatibility of determinants and traces in the following sense:
\end{pg}

\begin{thm}\label{T:maintheorem}
{\rm (i)} $\mathrm{tr}(\omega (E)) = \omega (\det (E))$ in $H^2(\BaseSite, K).$

{\rm (ii)} If $(E', \sigma )$ is a deformation of $E$ and $\alpha \in \mathrm{Ext}^1_{\mls O}(E, E\lotimes _{\mls O}K)$ is a class, then 
$$
\det (\alpha *(E', \sigma )) = \mathrm{tr}(\alpha )*(\det (E'), \det (\sigma )).
$$

{\rm (iii)} If furthermore we have  $\mathrm{Ext}^{-1}(E, E) = 0$ then for a deformation $(E', \sigma )$ the map on automorphism groups 
$$
\mathrm{Ext}^0_{\mls O}(E, E\lotimes _{\mls O}K)\rightarrow \mathrm{Ext}^0_{\mls O}(\det (E), \det (E)\lotimes _{\mls O}K)\simeq H^0(\BaseSite, K)
$$
induced by the determinant agrees with the trace map.
\end{thm}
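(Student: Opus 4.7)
The plan is to reduce to a local situation where $E$ admits a global strict perfect resolution, verify all three parts by an explicit cocycle calculation that generalizes the classical identity $d\log\det = \mathrm{tr}$ for matrices, and then globalize using a sufficiently functorial presentation of the determinant and the obstruction theory, for which the tools of higher category theory and algebraic $K$-theory are well suited.

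Concretely, suppose first that $E$ is represented by a strict perfect complex $P^\bullet$, i.e.\ a bounded complex of finite rank locally free $\mls O$-modules. A deformation of $E$ then amounts to a choice of $\mls O'$-linear lifts $\widetilde{d}^i : \widetilde{P}^i \to \widetilde{P}^{i+1}$ of the differentials, where $\widetilde{P}^i$ is the canonical locally free lift of $P^i$ to $\mls O'$. Such lifts exist locally, and the failure of $\widetilde{d}^{i+1}\widetilde{d}^i$ to vanish is a map $P^i \to P^{i+2} \otimes K$ whose class in $\mathrm{Ext}^2(E, E \lotimes K)$ is $\omega(E)$. In this setting $\det(E) = \bigotimes_i (\det P^i)^{\otimes (-1)^i}$, and $\omega(\det E)$ is a \v{C}ech class in $K$ built from the same local data. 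A direct computation, carried out degree by degree and using the standard identity for the first-order variation of $\det$, identifies this \v{C}ech class with $\mathrm{tr}(\omega(E))$. This proves (i) in the strict case. Parts (ii) and (iii) follow by the same mechanism: the action of $\alpha \in \mathrm{Ext}^1(E, E \lotimes K)$ perturbs $\widetilde{d}^i$ by a cocycle representative of $\alpha$, whereupon $\det$ shifts by $\mathrm{tr}(\alpha)$ to first order in $K$; restricting to $\alpha$ that commute with the differential yields (iii).

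To upgrade from the strict case to a general site $\BaseSite$, where global strict resolutions need not exist, the constructions $E \mapsto \omega(E)$, $E \mapsto \det(E)$, and the trace must be packaged as morphisms of suitable presheaves of spaces or spectra on $\BaseSite$. Algebraic $K$-theory of perfect complexes provides such a framework: the determinant refines to a map $\det : \mathrm{K}(\mls O) \to \mathrm{Pic}(\mls O)$, and its first-order derivative on classes should be identified with the trace map of \cite[V,~3.7.3]{Illusie}. Because every perfect complex is locally quasi-isomorphic to a strict perfect complex, and because all the structure maps in sight are functorial with respect to localization on $\BaseSite$, the comparisons of \cref{T:maintheorem} should propagate from the strict verification to the general assertion by descent. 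The obstruction class and the torsor structure are then determined by this local incarnation together with compatibility with restriction.

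The main obstacle is the coherent construction of the determinant as a sufficiently structured functor on perfect complexes in the topos setting, and the identification of its linearization with Illusie's trace, so that the elementary cocycle computation in the strict case truly controls the answer globally. Reconciling the classical cocycle-theoretic definitions of $\omega(E)$ and $\mathrm{tr}$ with the $K$-theoretic or $\infty$-categorical incarnations used to globalize is the genuine technical content; once that dictionary is in place, the three parts of \cref{T:maintheorem} follow in parallel from the single calculation $d\log\det = \mathrm{tr}$ performed on a complex of locally free modules.
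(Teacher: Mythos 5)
You have correctly located the two essential ingredients the paper uses: the determinant refines to a morphism $\det : K(\mls O) \to \mls Pic^{\mathbf{Z}}(\mls O)$ of sheaves of spectra, and its ``derivative'' along a square-zero extension must be identified with Illusie's trace map (this is \cref{P:4.6.1}). You also correctly identify that, once that identification is in hand, parts (i)--(iii) follow essentially in parallel from the torsor/obstruction formalism. The local cocycle picture you describe --- lift the differentials, measure the failure of $\widetilde d^{i+1}\widetilde d^i = 0$, and compute $d\log\det = \mathrm{tr}$ --- is exactly what underlies \cite[V, 3.7.7]{Illusie} and is the content of the paper's alternate proof in \cref{S:section10} for complexes with global resolutions.

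However, there is a genuine gap in the globalization step, and it is the one you yourself flag as ``the main obstacle'' without resolving. You write that because every perfect complex is locally strict and all maps in sight are functorial under restriction, the identity ``should propagate from the strict verification to the general assertion by descent.'' This does not work as stated: $\mathrm{tr}(\omega(E))$ and $\omega(\det E)$ are elements of $H^2(\BaseSite,K)$, and verifying their equality after restriction to each member of a cover $\{U_i\}$ does not imply their equality globally (the kernel of $H^2(\BaseSite,K)\to\prod_i H^2(U_i,K)$ is in general nonzero). What is needed is not a local verification that descends, but a \emph{coherent} comparison of two morphisms of sheaves of spectra before any cohomology is taken. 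This is precisely what the paper supplies: the determinant is built by sheafification of the $K$-theory presheaf on the auxiliary site $\BaseSite^\zar$, where by \cref{L:2.3} it is controlled by $\coprod_n BGL_n$; the trace is likewise built as a map of presheaves of spectra via the universal determinant functor $K(A[I]) \to BI$; and then \cref{P:4.6.1} reduces the comparison of the two to a comparison on $\coprod_n BGL_n$, where the matrix identity you invoke finally appears. In other words, the cocycle calculation you describe is not the engine of the global proof but the last, local step after the $\infty$-categorical scaffolding has reduced the problem to it.

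A smaller issue: a deformation of a perfect complex is not the same as a collection of lifts $\widetilde d^i$ of the differentials on ``the canonical locally free lift of $P^i$'' --- such canonical lifts need not exist, and even locally one must allow the underlying graded module to deform. This is harmless in a first-order local computation but signals that the strict-complex model cannot be taken too literally even locally; the paper instead works with the $\infty$-categorical fiber of the reduction functor (\cref{P:2.12}--\ref{P:2.15}), which is what makes the torsor structure and the actions in (ii), (iii) well-defined without choices. Your outline is a correct description of the intuition and of the special case treated in \cref{S:section10}, but the identification of the linearization of $\det$ with $\mathrm{tr}$ as a morphism of sheaves of spectra --- not merely a local equality of cohomology classes --- is the actual content of the theorem, and your proposal leaves it open.
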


\begin{pg}\label{P:history}
 This compatibility seems to be well-known to experts and appears in the literature in the case of complexes of coherent sheaves admitting a global resolution in \cite{HuybrechtsThomas, Langholf, Thomas}. The  case of perfect complexes on quasi-projective schemes over a field of characteristic $0$ (which themselves always admit global resolutions) also appears in \cite[Proposition 3.2]{STV}.  As we explain in \cref{S:section10}, these cases of complexes admitting global resolutions also follow from the additivity of traces for morphisms in the filtered derived category \cite[V, 3.7.7]{Illusie}. The results in \cite[Chapter 7, \S 3.3]{GRII}, which concern the cotangent complex of the stack of perfect complexes for derived schemes over fields of characteristic $0$, are also closely related to the work in this article.  Our approach in this article is close in spirit to \cite{GRII, STV}.

\end{pg}
\begin{pg}
Fundamentally, \cref{T:maintheorem} is a reflection of a more basic statement in the context of ``formal moduli problems'' in the sense of Lurie \cite[Chapter IV]{LurieSAG}.  Both perfect complexes and line bundles form such moduli problems and the determinant map defines a morphism between them for which the induced map on tangent complexes is the trace map.  While we do not use the language of formal moduli problems, this framework captures the approach taken here.

There are two main issues to be dealt with in proving \cref{T:maintheorem}.  The first is the definition of the determinant of a perfect complex. In classical treatments, such as \cite{Knudsen}, one presents a complex locally using a resolution, takes the alternating tensor product of the determinants of the sheaves in the complex, and then has to argue that this globalizes and enjoys various good properties.  This approach to defining the determinant is difficult to work with in the general context of this article.   The second issue is that the deformation problem we are concerned with is fundamentally higher-categorical in nature.  We should consider not only complexes and isomorphisms between them, but homotopies and higher homotopies.
Both these issues are addressed by considering the problem from an $\infty $-categorical perspective.
\end{pg}

\begin{pg} 
In order to understand the obstruction class $\omega (E)$ we will employ the following basic idea, discussed at length in \cite[0.1.3.5 and surrounding text]{LurieSAG}.  For a map of ${\mls O}$-modules $\rho :K\rightarrow J$ one can pushout ${\mls O}'$ along $\rho $ to get a new surjection
$$
{\mls O}'_\rho \rightarrow {\mls O}
$$
with kernel $J$, and equipped with a morphism ${\mls O}'\rightarrow {\mls O}'_\rho $.  If we could find an inclusion $\rho :K\hookrightarrow J$ of $K$ into an injective $J$ such that $E$ lifts to a perfect complex $E'_\rho $ over ${\mls O}'_\rho $, then the obstruction class can be understood as follows.  The pushout of ${\mls O}'$ along the composition
$$
K\rightarrow J\rightarrow J/K
$$
is isomorphic to ${\mls O}[J/K]$ (the ring of dual numbers on $J/K$), and we get by pushing out $E'_\rho $ along ${\mls O}'_\rho \rightarrow {\mls O}[J/K]$ a class in 
$$
\mathrm{Ext}^1_{\mls O}(E, E\lotimes J/K).
$$
The image of this class under the boundary map
$$
\mathrm{Ext}^1_{\mls O}(E, E\lotimes J/K)\rightarrow  \mathrm{Ext}^2_{\mls O}(E, E\lotimes K)
$$
arising from the short exact sequence
$$
0\rightarrow K\rightarrow J\rightarrow J/K\rightarrow 0
$$
is then the obstruction $\omega (E)$.
Unfortunately it is not always possible to find such an inclusion $\rho $.  However, we can always choose an inclusion $K\hookrightarrow J$ into an injective ${\mls O}$-module and consider the induced inclusion
$$
K\hookrightarrow I:= (\xymatrix{J\ar[r]^-{\mathrm{id}}& J}),
$$
where the complex $I$ on the right is concentrated in degrees $-1$ and $0$.  

Applying the Dold-Kan correspondence to $I$ we obtain an inclusion of simplicial $\mls O$-modules $K\hookrightarrow I_\bullet $, and we can form the pushout of $\mls O'$ along $K\rightarrow I_\bullet $ in the category of simplicial rings.  
 This leads us to consider perfect complexes over simplicial rings and their determinants, which is the context for our discussion of the determinant.
\end{pg}

\begin{pg} The article is organized as follows. 

In \cref{S:section2new} we review the basic definitions pertaining to the $\infty $-category  of modules over a sheaf of simplicial rings.  We then explain how to understand the fiber, in the sense of $\infty $-categories, of the reduction functor obtained from a surjection of simplicial algebras with square-zero kernel, such as that which arises in our deformation problem for complexes.

\cref{S:Ktheory} contains a brief review of the various approaches to $K$-theory and comparisons between them that will play a role in this article. 

\cref{S:section3,S:section4} are devoted to a discussion of the determinant functor from perfect complexes to a suitable Picard category of line bundles.  While classically one defines the determinant using resolutions and gluing, the $\infty $-categorical approach to the determinant is easier to work with in our context (in fact, we do not know how to define the determinant in the necessary generality without it).  The key point is that the (connective) $K$-theory of the category of perfect complexes is realized as the universal map to a grouplike $\mathbf{E}_\infty $-monoid from the $\mathbf{E}_\infty $-monoid of projective modules.  We can then use variants of Quillen's plus construction to describe the $K$-theory of perfect complexes in more explicit ways that allows us to define the determinant directly.  We then globalize the discussion by taking global sections in the $\infty $-categorical sense (homotopy limits).

As pointed out to us by Bhargav Bhatt, the construction of the determinant used  in this article also enables us to prove a compatibility with ring structure on $K$-theory.   We explain this in \cref{S:section5}.  The reader may wish to omit this section as it is not used in the rest of the article.

\cref{S:trace} gives a description of the trace map of \cite[V, 3.7.3]{Illusie} from an $\infty $-categorical perspective which will play a role in comparing it with the determinant map.

In \cref{S:section6} we prove the fundamental compatibility of the determinant and trace maps.  The main result is \cref{P:4.6.1}.  In the following two Sections \ref{S:section7} and \ref{S:section8} we then apply the theory to the deformation theory of perfect complexes and prove the theorems discussed in this introduction.
In  \cref{S:section7} we also verify the equivalence 
 of the definition of the obstruction to deforming a perfect complex  to one due to Gabber.

Finally in \cref{S:section10} we give an alternate proof of \cref{T:maintheorem} (i) in the case when one has global resolutions.  The approach in this section does not use $\infty $-categories but rather the filtered derived category and the compatibility of the trace map with passing to the associated graded proven in \cite[V, 3.7.7]{Illusie}.

We have also included an appendix wherein we establish the basic relationship between sheaves of dg-modules over the normalization of a simplicial ring and the $\infty $-category of modules in the sense of \cref{S:section2new}.  This result seems well-known to experts but we include it here for lack of a suitable reference.
\end{pg}

\subsection{Acknowledgments} The authors are grateful to Luc Illusie who stressed the importance of providing a proof of \cref{T:maintheorem}, and to Benjamin Antieau, Bhargav Bhatt, Akhil Mathew, and Joseph Stahl for helpful conversations and comments on preliminary versions of this article.  We note, in particular, that Bhatt suggested the contents of \cref{S:section5}.  The authors also thank Ofer Gabber for sharing his construction of the obstruction in \cite{Gabber}.
The authors thank the referee who suggested a number of corrections and clarifications which greatly improved the article.

During the work on this article, Lieblich was partially supported by NSF grant 
DMS-1600813 and a Simons Foundation Fellowship, and Olsson was partially
supported by NSF grants DMS-1601940 and DMS-1902251. Part of this work was done in Spring 2019 when the authors visited MSRI in Berkeley, whose support is gratefully acknowledged.

\subsection{Conventions}
We will use the language of $\infty $-categories as developed by Lurie in \cite{LurieHT, LurieSAG, LurieHA}, and differential graded (dg) categories as in \cite{toen}.

We will often pass from a stable $\infty $-category $\mls C$ to its underlying $\infty $-groupoid, which we will denote by $\mls C^\simeq $.  This is the $\infty $-category obtained from $\mls C$ by considering only morphisms which induce isomorphisms in the homotopy category (see \cite[1.2.5.3]{LurieHT}).

Throughout this article all simplicial rings considered will be commutative, and we usually omit the adjective ``commutative''.

For a ring $A$ and simplicial $A$-module $I_\bullet $ we will often consider the
simplicial ring $A[I_\bullet ]$ of dual numbers given by $A\oplus I_\bullet $ and multiplication 
$$
(a, i)\cdot (b, j) = (ab, aj+bi).
$$
There is a surjection $\pi :A[I_\bullet ]\rightarrow A$ sending $I_\bullet $ to $0$, and a retraction $A\rightarrow A[I_\bullet ]$ given by $a\mapsto (a, 0)$.

We write $\mathrm{Sp}$ for the $\infty $-category of spectra  and $\mathrm{Sp} ^{\geq 0}$ for the $\infty $-category of connective spectra (see \cite[0.2.3.10 and 0.2.3.12]{LurieSAG}).

\section{The $\infty $-category of perfect complexes}\label{S:section2new}

\subsection{Animated rings}

For a commutative ring $R$ we follow \cite[25.1.1.1]{LurieSAG} and consider an $\infty $-category $\text{CAlg}_R^\Delta $.  In \cite{LurieSAG} this is referred to as the $\infty $-category of simplicial commutative rings, but we prefer to reserve this term for simplicial commutative rings in the classical sense, and use the terminology of \cite[5.1.6 (3)]{CS}
and refer to $\text{CAlg}_R^\Delta $ as the $\infty $-category of \emph{animated rings}.  As noted in loc. cit. the $\infty $-category $\text{CAlg}_R^\Delta $ can be viewed as the $\infty $-category obtained from simplicial commutative $R$-algebras by inverting weak equivalences.  The category of animated rings can also be viewed as the $\infty $-category obtained by starting with the category of simplicial commutative $R$-algebras, endowing this category with the simplicial model category structure described in \cite[5.5.9.1]{LurieHT}, and applying the nerve to the subcategory of cofibrant-fibrant objects \cite[25.1.1.3]{LurieSAG}.  Because of these descriptions of $\text{CAlg}_R^\Delta $ we will use simplicial notation in describing objects of $\text{CAlg}_R^\Delta $ (e.g. $A_\bullet \in \text{CAlg}_R^\Delta $).

\subsection{Modules over animated rings}

For an animated $R$-algebra $A_\bullet \in \text{CAlg}_R^\Delta $ we have the associated stable infinity category of $R$-modules \cite[25.2.1.1]{LurieSAG}, which we will denote by $\mls D(A_\bullet )$ (in loc. cit. this category is denoted $\text{Mod}_{A_\bullet }$).

By \cite[I, 3.1.3]{Illusie}, for a simplicial $R$-algebra $A_\bullet $ the normalization $N(A_\bullet )$ is a strictly commutative differential graded algebra, and normalization defines a functor from $A_\bullet $-modules to differential graded $N(A_\bullet )$-modules.  
As noted in \cite[2.5]{KST}, by an argument similar to \cite[Proof of 7.1.2.13]{LurieHA}, this defines an equivalence between $\mls D(A_\bullet )$ and the $\infty $-category obtained from the category of dg-modules over $N(A_\bullet )$ by inverting quasi-isomorphisms (this is the approach taken for example in \cite[3.1]{ToenDAG}; see also \cite[1.1]{Shipley}).

\subsection{Topology}

Let $\Lambda $ be a ring and let $\mathbf{S}$ be a site.

\begin{pg}
As in \cite[\S 1.3.5]{LurieSAG} we can consider sheaves of animated $\Lambda $-algebras, defined as sheaves on $\mathbf{S}$ taking values in the $\infty $-category $\text{CAlg}_\Lambda ^\Delta $.  For a sheaf of animated $\Lambda $-algebras $A_\bullet $ we can consider, as in \cite[2.1.0.1]{LurieSAG}, the associated sheaf of $\mathbf{E}_\infty $-algebras and the associated module category, which we will denote $\text{Mod}_{(\mathbf{S}, A_\bullet )}$.  If $A_\bullet $ is a simplicial object in the category of sheaves of $\Lambda $-algebras then we will also denote by $\text{Mod}_{(\mathbf{S}, A_\bullet )}$ the module category of the associated sheaf of $\mathbf{E}_\infty $-algebras.

As discussed in appendix \ref{A:appendixA}, for a simplicial sheaf of $\Lambda $-algebras $A_\bullet $ we can also consider its normalized complex $N(A_\bullet )$, which is a sheaf of strictly commutative differential graded algebras, and its associated category of sheaves of differential graded modules $\text{Mod}^\dg _{(\mathbf{S}, N(A_\bullet ))},$ viewed as a model category with the flat model category structure.  This is a differential graded category and by the general construction of \cite[1.3.1.6]{LurieHA} we get an $\infty $-category 
$$
\mls D (\mathbf{S}, A_\bullet ):= N_\dg (\text{Mod}^{\dg , \circ }_{(\mathbf{S}, N(A_\bullet ))}), 
$$
where $\text{Mod}^{\dg , \circ }_{(\mathbf{S}, N(A_\bullet ))}\subset \text{Mod}^{\dg }_{(\mathbf{S}, N(A_\bullet ))}$ denotes the subcategory of fibrant-cofibrant objects.
We write $D(\BaseSite, A_\bullet )$ for the associated homotopy category.
As noted in \ref{A:thm1} (in the case of a discrete ring $A$ this is \cite[2.1.2.3]{LurieSAG}) the $\infty $-category $\mls D (\mathbf{S}, A_\bullet )$ is naturally identified with the hypercomplete objects in $\text{Mod}_{(\mathbf{S}, A_\bullet )}$.  For our purposes studying perfect complexes, this distinction between $\mls D (\mathbf{S}, A_\bullet )$ and $\text{Mod}_{(\mathbf{S}, A_\bullet )}$ is not important, and it is a matter of preference as to which $\infty $-categorical version of the derived category one works with.    
\end{pg}

\begin{rem}
For two objects $M, N \in \text{Mod}^{\dg , \circ }_{(\mathbf{S}, N(A_\bullet ))}$ defining objects of $\mls D(\BaseSite, A_\bullet )$, a description of the mapping space
$$
\mathrm{Map}_{\mls D(\BaseSite, A_\bullet )}(M, N)
$$
is provided by \cite[1.3.1.12]{LurieHA}, which shows that 
$$
\mathrm{Map}_{\mls D(\BaseSite, A_\bullet )}(M, N)\simeq \DK(\tau _{\leq 0}\mathrm{Hom}^\bullet _{N(A_\bullet )}(M, N)),
$$
where on the right we consider truncation of the mapping complex followed by the Dold-Kan functor (see for example \cite[1.2.3.5]{LurieHA}).
Note furthermore that since any object of $\text{Mod}^{\dg , \circ }_{(\mathbf{S}, N(A_\bullet ))}$ is fibrant the complex $\mathrm{Hom}^\bullet _{N(A_\bullet )}(M, N)$ is calculating the internal Hom-complex in the homotopy category $\mathrm{Ho}(\text{Mod}^{\dg }_{(\mathbf{S}, N(A_\bullet ))}).$
\end{rem}

\subsection{Deformations}

\begin{pg}
For a simplicial sheaf of $\Lambda $-algebras $A_\bullet $ we can also (by \ref{L:A.6}) describe $\mls D(\BaseSite, A_\bullet )$ as the $\infty $-category
$$
N_\dg (\text{Mod}_{(\BaseSite, N(A_\bullet ))}^{\dg , \cof})[W^{-1}],
$$
obtained by localizing the dg-nerve of cofibrant objects in $\text{Mod}_{(\BaseSite, N(A_\bullet ))}^{\dg}$ along weak equivalences.
\end{pg}

\begin{pg}
If $A_\bullet \rightarrow B_\bullet $ is a map of sheaves of simplicial $\Lambda $-algebras then there is an induced functor
$$
B_\bullet \lotimes _{A_\bullet }(-):\mls D(\BaseSite, A_\bullet )\rightarrow \mls D(\BaseSite, B_\bullet ).
$$
  This functor is induced by the tensor product
$$
N(B_\bullet )\otimes _{N(A_\bullet )}(-):\mathrm{Mod}_{(\BaseSite, N(A_\bullet ))}^\cof \rightarrow \mathrm{Mod}_{(\BaseSite , N(B_\bullet ))}^\cof 
$$
To prove that this gives a well-defined functor on localizations, we must show that if $a:M \rightarrow N $ is an equivalence in $\mathrm{Mod}_{(\BaseSite , N(B_\bullet ))}^\cof $ then 
$$
N(B_\bullet )\otimes _{N(A_\bullet ) }M \rightarrow N(B_\bullet ) \otimes _{N(A_\bullet ) }N
$$
is an equivalence in $\mathrm{Mod}_{(\BaseSite , N(B_\bullet ))}^\cof $.  For this it suffices to show that the adjoint forgetful functor
$$
\mathrm{Mod}_{(\BaseSite, N(B_\bullet ))}\rightarrow \mathrm{Mod}_{(\BaseSite ,N(A_\bullet ))}
$$
preserves fibrations and trivial fibrations -- this is immediate from the definitions.  Note also that this functor induces the usual derived tensor product on the homotopy categories.
\end{pg}

\begin{pg} Let $A'_\bullet \rightarrow A_\bullet $ be a surjective map of simplicial $\Lambda $-algebras with kernel $I_\bullet $ satisfying $I_\bullet ^2 = 0$.  Let $E \in \mls D(\BaseSite, A_\bullet )$ be an object.  We denote by $\text{\rm Def}_\infty (E)$ the homotopy fiber product of the diagram
\begin{equation}\label{E:diag0}
\xymatrix{
& \mls D(\BaseSite, A_\bullet ')\ar[d]^-{A_\bullet \lotimes _{A'_\bullet }(-)}\\
\star \ar[r]^-{E }& \mls D(\BaseSite, A_\bullet ).}
\end{equation}
\end{pg}

\begin{pg}\label{P:2.12}
Let $\Def (E )$ denote the category whose objects are pairs $(E^{\prime }, \sigma )$, where $E^{\prime }\in D(\BaseSite, A'_\bullet )$ is an object and 
$$
\sigma :E^{\prime }\lotimes _{N(A'_\bullet )}N(A_\bullet )\rightarrow E
$$
is an isomorphism in $D(\BaseSite, A_\bullet )$.  A morphism
\begin{equation}\label{E:defmorphism}
q :(E_1^{\prime }, \sigma _1)\rightarrow (E_2^{\prime }, \sigma _2)
\end{equation}
in $\Def (E )$ is given by a morphism $\rho :E_1^{\prime }\rightarrow E_2^{\prime }$ in $D(\BaseSite, A'_\bullet )$ such that the diagram in $D(\BaseSite, A_\bullet )$
$$
\xymatrix{
E_1^{\prime }\lotimes _{N(A'_\bullet )}N(A_\bullet )\ar[rr]^-\rho \ar[rd]_-{\sigma _1}&& E_2^{\prime }\ar[ld]^-{\sigma _2}\lotimes _{N(A'_\bullet )}N(A_\bullet )\\
& E& }
$$
commutes.

There is a natural map
\begin{equation}\label{E:homotopymap}
\text{Ho}(\text{\rm Def}_\infty (E))\rightarrow \Def (E ).
\end{equation}
Indeed the category $\Def (E )$ is the categorical fiber product of the diagram
\begin{equation}\label{E:diag1}
\xymatrix{
& \text{Ho}(\mls D(\BaseSite, A_\bullet '))\ar[d]^-{A_\bullet \lotimes _{A'_\bullet }(-)}\\
\star \ar[r]^-{E }& \text{Ho}(\mls D(\BaseSite, A_\bullet )).}
\end{equation}
Now by general adjunction properties of passing to the homotopy category \cite[1.2.3.1]{LurieHT}, the diagram \eqref{E:diag0} maps to the diagram obtained by applying the nerve to \eqref{E:diag1}.  By passing to the homotopy categories of the associated homotopy fibers we get the map \eqref{E:homotopymap}.

We can understand the $\infty $-category $\text{\rm Def}_\infty (E)$ and its relationship with $\Def (E )$ as follows.
\end{pg}

\begin{pg}
Note first of all that $\text{\rm Def}_\infty (E)$ is a groupoid in the sense of \cite[1.2.5]{LurieHT} (that is, its homotopy category is a groupoid).  This follows from observing that if $E^{\prime }\in \mls D(\BaseSite, A'_\bullet )$ is an object with an equivalence $\sigma :E^{\prime }\lotimes _{N(A'_\bullet )}N(A_\bullet )\rightarrow E$ then tensoring the sequence of $N(A_\bullet ')$-modules
$$
0\rightarrow N(I_\bullet )\rightarrow N(A'_\bullet )\rightarrow N(A_\bullet )\rightarrow 0
$$
with $E^{\prime }$ we get a distinguished triangle 
$$
\xymatrix{
E \lotimes _{N(A_\bullet )}N(I_\bullet )\ar[r]& E^{\prime }\ar[r]& E \ar[r]& E \lotimes _{N(A_\bullet )}N(I_\bullet )[1]}
$$
in the triangulated category $D(\BaseSite, A'_\bullet ):= \text{Ho}(\mls D(\BaseSite, A'_\bullet )).$

It follows that for a morphism of pairs (that is, a morphism in $\mls D(\BaseSite, A'_\bullet )$ compatible with the identifications with $E$ in $\mls D(\BaseSite, A_\bullet )$)
$$
\rho :(E_1^{\prime }, \sigma _1)\rightarrow (E_2^{\prime }, \sigma _2)
$$
we get an induced morphism of distinguished triangles in $D(\BaseSite, A'_\bullet )$
$$
\xymatrix{
E \lotimes _{N(A_\bullet )}N(I_\bullet )\ar[r]\ar@{=}[d]& E^{\prime }_1\ar[r]\ar[d]^-\rho & E \ar[r]\ar@{=}[d]& E \lotimes _{N(A_\bullet )}N(I_\bullet )[1]\ar@{=}[d]\\
E \lotimes _{N(A_\bullet )}N(I_\bullet )\ar[r]& E^{\prime }_2\ar[r]& E \ar[r]& E \lotimes _{N(A_\bullet )}N(I_\bullet )[1],}
$$
and therefore $\rho $ is an equivalence.

It follows that $\text{\rm Def}_\infty (E)$ can also be described as the fiber product of the underlying $\infty $-groupoids
\begin{equation}\label{E:diag2}
\xymatrix{& \mls D(\BaseSite, A'_\bullet )^\simeq \ar[d]^-{A_\bullet \lotimes _{A'_\bullet }(-)}\\
\star \ar[r]^-{E }& \mls D(\BaseSite, A_\bullet )^\simeq .}
\end{equation}

From this, and looking at the associated long exact sequences of homotopy groups associated to \eqref{E:diag1} and \eqref{E:diag2} one also gets that \eqref{E:defmorphism} induces a bijection on isomorphism classes of objects, so we can think of objects of $\text{\rm Def}_\infty (E)$ as pairs $(E^{\prime }, \rho )$ as above.
\end{pg}

\begin{rem} The above argument shows, in fact, that a morphism in $\mls D(\BaseSite, A'_\bullet )$ is an equivalence if and only if its image in $\mls D(\BaseSite, A_\bullet )$ is an equivalence.
\end{rem}

\begin{pg}
This remark implies that for 
an object $(E^{\prime }, \rho )\in \text{\rm Def}_\infty (E)$ the derived automorphism group
$$
\text{\rm Aut}^\infty (E^{\prime }, \rho ) = \Omega _{(E^{\prime }, \rho )}(\text{\rm Def}_\infty (E))
$$
can be described as the homotopy fiber over $0$ of the map
$$
\text{DK}(\tau _{\leq 0}\mathrm{RHom}_{N(A_\bullet ')}(E^{\prime }, E^{\prime }))\rightarrow \text{DK}(\tau _{\leq 0}\mathrm{RHom}_{N(A_\bullet )}(E^{\bullet }, E^{\bullet })).
$$
Applying $\mathrm{RHom}_{N(A'_\bullet )}(E^{\prime }, -)$ to the distinguished triangle
$$
E \lotimes _{N(A_\bullet )}N(I_\bullet )\rightarrow E^{\prime }\rightarrow E \rightarrow E \lotimes _{N(A_\bullet )}N(I_\bullet )[1]
$$
we see that 
$$
\text{\rm Aut}^\infty (E^{\prime }, \rho )\simeq \text{DK}(\tau _{\leq 0}\mathrm{RHom}_{N(A_\bullet )}(E , E \lotimes N(I_\bullet ))).
$$
From this we conclude that the $\infty $-groupoid $\text{\rm Def}_\infty (E)$ is equivalent to
\begin{equation}\label{E:step1}
\text{DK}(\tau _{\leq 0}\mathrm{RHom}_{N(A_\bullet )}(E , E \lotimes N(I_\bullet ))[1])\times \pi _0(\Def (E )).
\end{equation}
\end{pg}

\begin{pg}
In the case when $\text{\rm Def}_\infty (E)$ is nonempty we can describe the set of isomorphism classes as follows.  Fix one lifting $(E_0^{\prime }, \rho _0)\in \Def (E )$, and define a map
\begin{equation}\label{E:2.14.1}
\pi _0(\Def (E ))\rightarrow \mathrm{Ext}^1_{N(A_\bullet )}(E, E \lotimes N(I_\bullet ))
\end{equation}
by sending an object $(E^{\prime }, \rho )$ to the image of $\rho ^{-1}\circ \rho _0 $ under the map
$$
\mathrm{Hom}_{N(A_\bullet )}(E_0'\lotimes N(A_\bullet ), E^{\prime }\lotimes N(A_\bullet ) )\simeq \mathrm{Hom}_{N(A'_\bullet )}(E_0^{\prime }, E^{\prime }\lotimes N(A_\bullet ))\rightarrow \mathrm{Ext}^1_{N(A_\bullet )}(E, E \lotimes N(I_\bullet ))
$$
obtained from the distinguished triangle
$$
E \lotimes N(I_\bullet )\rightarrow E^{\prime }\rightarrow E \rightarrow E \lotimes N(I_\bullet )[1]
$$
by applying 
$$
\mathrm{Hom}_{N(A'_\bullet )}(E_0^{\prime }, -).
$$
The image of the class of $(E^{\prime }, \rho )$ under \eqref{E:2.14.1} is by construction zero if and only if the morphism $\rho  _0^{ -1}\circ \iota $ lifts to $N(A'_\bullet )$, which implies that \eqref{E:2.14.1} is injective.  

The map is also surjective.  This can be seen as follows.    For a class
$$
\alpha \in \mathrm{Ext}^1_{N(A_\bullet )}(E, E \lotimes N(I_\bullet ))
$$
we can represent $\alpha $ by a map of $N(A_\bullet )$-modules
$$
\tilde \alpha :E^{\prime \bullet}_0\otimes _{N(A'_\bullet ) }N(A_\bullet )\rightarrow E \otimes N(I_\bullet )[1].
$$
Viewing this as a map of $N(A'_\bullet ) $-modules and taking cones we get 
 a short exact sequence of $N(A'_\bullet ) $-modules
$$
0\rightarrow E \otimes N(I_\bullet )\rightarrow T_\alpha  \rightarrow E^{\prime }_0\otimes _{N(A'_\bullet ) }N(A_\bullet )\rightarrow 0.
$$
Taking direct sum with $E^{\prime }_0$ we get a short exact sequence
$$
0\rightarrow (E \otimes N(I_\bullet ))^{\oplus 2}\rightarrow T_\alpha  \oplus E^{\prime }_0 \rightarrow E^{\prime }_0 \otimes N(A_\bullet ) \oplus E \rightarrow 0.
$$
Pulling this back along the graph 
$$
E^{\prime }_0 \otimes N(A_\bullet )\rightarrow E^{\prime }_0 \otimes N(A_\bullet )\oplus E 
$$
of $\rho $ and pushing out along the summation map
$$
(E \otimes N(I_\bullet ))^{\oplus 2}\rightarrow E \otimes N(I_\bullet )
$$
we get an extension of $N(A'_\bullet ) $-modules
$$
0\rightarrow E \otimes N(I_\bullet )\rightarrow E_\alpha ^{\prime }\rightarrow E^{\prime }_0 \otimes N(A_\bullet )\rightarrow 0.
$$
We leave it to the reader to check that this defines an object of $\Def (E )$ with class $\alpha $.
\end{pg}

\begin{pg}\label{P:2.15}
Combining this with \eqref{E:step1} we find that in the case when $\text{\rm Def}_\infty (E)$ is nonempty the pullback of \eqref{E:diag0} can be described as
$$
\DK (\tau _{\leq 0}(\mathrm{Hom}_{N(A_\bullet )}(E, E \lotimes N(I_\bullet ))[1])).
$$
\end{pg}
\begin{pg}\label{P:2.16}
In what follows we will also consider the subcategory 
$$
\mls D_{\perf}(\BaseSite, A_\bullet )\subset \mls D (\BaseSite, A_\bullet )
$$
of perfect $A_\bullet $-modules \cite[7.2.4.1]{LurieHA}.  This is again a stable $\infty $-category.

We will consider this as a symmetric monoidal stable $\infty $-category with the monoidal structure given by direct sums.

In the case when the site $\BaseSite$ is trivial (e.g. one object and one morphism) we write simply $\mls D(A_\bullet )$ (resp. $\mls D_\perf (A_\bullet )$) for $\mls D(\BaseSite, A_\bullet )$ (resp. $\mls D_\perf (\BaseSite, A_\bullet )$).
\end{pg}

\begin{rem} 
In many cases the notion of perfect complex coincides with the notion of dualizable object but we do not know the relationship between the two notions in general.\footnote{This question was earlier asked by Daniel Bergh \url{https://mathoverflow.net/questions/313318/are-dualizable-objects-in-the-derived-category-of-a-ringed-topos-perfect}}
\end{rem}




\section{Various descriptions of $K$-theory}\label{S:Ktheory}

In this section we summarize for the convenience of the reader a few basic approaches to  and results about algebraic $K$-theory that we will need in what follows.

\subsection{$K$-theory as group completion}

In this approach to $K$-theory one starts with the $\infty $-category of $\mathbf{E}_\infty $-monoids and its subcategory of grouplike  $\mathbf{E}_\infty $-monoids \cite[5.2.6.6]{LurieHA}.  By \cite[5.2.6.26]{LurieHA}, this  $\infty $-subcategory is equivalent to the $\infty $-category of connective spectra $\mathrm{Sp}^{\geq 0}$.  We can then consider the group completion functor
$$
\xymatrix{
(\text{$\mathbf{E}_\infty $-monoids})\rightarrow (\text{grouplike $\mathbf{E}_\infty $-monoids})}\simeq \mathrm{Sp}^{\geq 0}.
$$
If $\mls P$ is a symmetric monoidal category the nerve of the underlying groupoid $\mls P^{\simeq }$ is an $\mathbf{E}_\infty $-monoid and the $K$-theory of $\mls P$, denoted $K(\mls P)$, is defined as the associated group completion.

\begin{example}\label{E:aring} For a ring $R$ the category $\mathrm{Proj}(R)$ of projective $R$-modules is symmetric monoidal under $\oplus $ and $K(R)$ is defined to be the group completion of the nerve of the underlying groupoid of $\mathrm{Proj}(R)$.
\end{example}

\subsection{$K$-theory as universal additive invariant}

The main reference for this approach is \cite{BGT}. 
Let $\mathrm{Cat}_\infty ^{\mathrm{ex}}$ denote the $\infty $-category of  small, idempotent complete, stable $\infty $-categories, with morphisms given by exact functors (see \cite[2.12]{BGT}).    The main result of \cite{BGT} is then that there is a universal ``additive'' invariant 
$$
U:\mathrm{Cat}_\infty ^{\mathrm{ex}}\rightarrow \mls M_{\mathrm{add}},
$$
where the target is again a presentable stable $\infty $-category.  In fact, $\mls M_{\mathrm{add}}$ is monoidal with unit object $\mathbf{1}_{\mls M_{\mathrm{add}}}$ given by applying $U$ to the compact objects in the stable $\infty $-category of spectra.  Given an object $\mls D\in \mathrm{Cat}_\infty ^{\mathrm{ex}}$ we can form the mapping spectrum (see \cite[2.15]{BGT})
$$
\mathrm{Map}(\mathbf{1}_{\mls M_{\mathrm{add}}}, U(\mls D)).
$$
By \cite[7.13]{BGT}  this defines connective algebraic $K$-theory
$$
K(\mls D):= \mathrm{Map}(\mathbf{1}_{\mls M_{\mathrm{add}}},U(\mls  D)).
$$
Because the functor $U$ is monoidal we have an induced map of spectra
$$
\mathrm{Map}_{\mathrm{Cat}_\infty ^{\mathrm{ex}}}(\mathbf{1}_{\mathrm{Cat}_\infty ^{\mathrm{ex}}}, \mls D)\rightarrow \mathrm{Map}(\mathbf{1}_{\mls M_{\mathrm{add}}},U(\mls  D)).
$$
This induces  a map
$$
\mls D^{\simeq }\rightarrow K(\mls D),
$$
from the underlying $\infty $-groupoid $\mls D^{\simeq }$ of $\mls D$.

One can also describe the algebraic $K$-theory of a small, idempotent complete, stable $\infty $-category using an appropriate version of the Waldhausen construction.  This is discussed in \cite[\S 7.1 and \S 7.2]{BGT}.  See also \cite{Barwick}.  This explicit construction makes the functoriality of $K$-theory clear, and in particular enables us to consider presheaves of small, idempotent complete, stable $\infty $-categories and their associated $K$-theory.

\begin{example} If $A_\bullet $ is a simplicial ring then $K(A_\bullet )$ is defined to be the $K$-theory of the stable $\infty $-category $\mls D_\perf (A_\bullet )$ defined in \cref{P:2.16}.

If $A_\bullet = R$ is a ring, then this recovers the $K$-theory defined in \cref{E:aring}.  Namely, the inclusion $\mathrm{Proj}(R)^{\simeq }\hookrightarrow \mls D_\perf (R)^{\simeq }$ induces a monoidal map (where the target is defined using the definition in \cite{BGT})
$$
\mathrm{Proj}(R)^{\simeq }\rightarrow K(\mls D_\perf (R)).
$$
By the universal property of group completion this induces a map (where the left side is defined using group completion)
$$
K(R)\rightarrow K(\mls D_\perf (R)).
$$
That this map is an equivalence follows, for example, by comparison with Waldhausen $K$-theory.
\end{example}

\subsection{$K$-theory and Picard groupoids}\label{P:3.5}

The relationship between $K$-theory and  determinants in the setting of functors to Picard categories as developed in \cite{Knudsen} is discussed in \cite{MTW}.

The setting here is that of a Waldhausen category $\mls W$ \cite[1.2]{Waldhausen}.  Let $w(\mls W)$ denote the category with the same objects as $\mls W$ but morphisms the weak equivalences of $\mls W$ and let $\mathrm{cof}(\mls W)$ denote the category whose objects are cofiber sequences
$$
\xymatrix{
A\ar@{^{(}->}[r]& B\ar@{->>}[r]& C}
$$
and whose morphisms are commutative diagrams
$$
\xymatrix{
A_1\ar@{^{(}->}[r]\ar[d]& B_1\ar[d]\ar@{->>}[r]& C_1\ar[d]\\
A_2\ar@{^{(}->}[r]& B_2\ar@{->>}[r]& C_2,
}
$$
where the vertical morphisms are weak equivalences.  For a commutative Picard category $\mls P$ (see for example \cite[\S 4]{Delignedeterminant}) a notion of determinant functor from $\mls W$ to $\mls P$ is defined in \cite[ \S 1.2]{MTW}.  This is a functor
$$
\delta :w(\mls W)\rightarrow \mls P
$$
together with an isomorphism $\sigma $ between the two induced functors
$$
\mathrm{cof}(\mls W)\rightarrow \mls P
$$
given by
$$
\xymatrix{
(A\ar@{^{(}->}[r]& B\ar@{->>}[r]& C)}\mapsto \delta (B)
$$
and
$$
\xymatrix{
(A\ar@{^{(}->}[r]& B\ar@{->>}[r]& C)}\mapsto \delta (A)+_{\mls P}\delta (C).
$$
The data $(\delta, \sigma )$ is required to satisfy various natural compatibilities detailed in  \cite[1.2.3]{MTW}.

A Picard category $\mls P$ defines a grouplike $\mathbf{E}_\infty $-monoid.  This is explained in \cite[12.5 and 12.15]{BS}.  By \cite[5.3]{Patel} this defines an equivalence between homotopy categories of Picard categories and $1$-truncated connective spectra.  

On the other hand, we can consider the $1$-truncation $\tau _{\leq 1}K(\mls W)$ of the Waldhausen $K$-theory of $\mls W$, which comes with a map
$$
\delta :w(\mls W)\rightarrow \tau _{\leq 1}K(\mls W).
$$
It is shown in \cite[1.6.3]{MTW}    that this functor has the structure of a universal determinant functor.  

The relationship between this approach and the perspective on $K$-theory as a universal additive invariant is discussed in \cite[\S 7.2]{BGT}.  

We do not need to develop the full theory here.  In order to have the appropriate functoriality, however, it is important to note that the comparison map is induced by an explicit map of $\infty $-categories in our context.  Namely, let $A$ be a ring and let $\mathrm{Perf}(A)$ denote the Waldhausen category of perfect complexes of $A$-modules, which is also a dg-category.  Then there is an induced functor of $\infty $-categories
$$
N(\mathrm{Perf}(A))\rightarrow N_{\mathrm{dg}}(\mathrm{Perf}(A))
$$
and the comparison between the two approaches is obtained by then applying the $\infty $-categorical version of the Waldhausen $S$-construction.  In particular, the comparison map is functorial in $A$.

\begin{rem} 
  We  have an inclusion $\mathrm{Proj}(A)\subset \mathrm{Perf}(A)$ of the category of projective modules, which induces an isomorphism on $K$-theory.  It follows from this that restriction defines an equivalence of categories between the category of determinant functors on $\mathrm{Perf}(A)$ and determinant functors on $\mathrm{Proj}(A)$.
\end{rem}

\subsection{$K$-theory and left Kan extension}\label{P:3.7}

The $K$-theory of animated rings can be described using the $K$-theory of ordinary rings as a Kan extension from smooth $\mathbf{Z}$-algebras. This result is due to Bhatt and Lurie (see \cite[A.0.6]{EHKSY}).

Let $\mathrm{Alg}_{\mathbf{Z}}$ denote the category of commutative rings.  Algebraic $K$-theory defines a functor
$$
K:\mathrm{Alg}_{\mathbf{Z}}\rightarrow \mathrm{Sp}^{\geq 0}.
$$
We can then consider the left Kan extension of this functor to get a functor
$$
\mathrm{CAlg}_{\mathbf{Z}}^\Delta \rightarrow \mathrm{Sp}^{\geq 0}
$$
from animated rings to spaces.
The result of Bhatt and Lurie states that this gives $K$-theory of animated rings, defined as a universal additive invariant.  

\section{Determinants: Punctual Case}\label{S:section3}

We are ultimately interested in studying determinants and traces for complexes on a general site, but as a first step in that direction we develop some preliminary material in this section in the case of the punctual site.

\begin{pg}
Let $A_\bullet $ be a simplicial ring with associated $K$-theory $K(A_\bullet )$.

Define $GL_n(A_\bullet )$ to be the fiber product of simplicial monoids of the diagram
$$
\xymatrix{
& M_n(A_\bullet )\ar[d]\\
GL_n(\pi _0(A_\bullet ))\ar[r]& M_n(\pi _0(A_\bullet ))}
$$
and let $\widehat {GL}(A_\bullet )$ denote the colimit of the $GL_n(A_\bullet )$ along the standard inclusions
$$
GL_n(A_\bullet )\hookrightarrow GL_{n+1}(A_\bullet ).
$$

The free modules $A_\bullet ^{\oplus n}$ define a map of $\mathbf{E}_\infty $-monoids
$$
\coprod _{n\geq 0}BGL_n(A_\bullet )\rightarrow \Perf ({A_\bullet })^\simeq .
$$
Here the monoidal structure on the left is induced by the natural isomorphisms
$$
A_\bullet ^{\oplus n}\oplus A_\bullet ^{\oplus m}\simeq A^{\oplus (n+m)}.
$$
\end{pg}
\begin{lem}\label{L:2.3}
The induced map on $K$-theory
$$
K(\coprod _{n\geq 0}BGL_n(A_\bullet ))\rightarrow K(A_\bullet )
$$
induces an isomorphism after Zariski sheafification on $\Sp (\pi _0(A_\bullet ))$.
\end{lem}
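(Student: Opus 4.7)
The plan is to reduce to the case where every finite projective $\pi_0(A_\bullet)$-module is free, and then to appeal to the classical identification of $K$-theory as the group completion of the monoid of free modules, extended to animated rings via the left Kan extension result of \cref{P:3.7}.

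First, since the conclusion concerns Zariski sheafification on $\Sp(\pi_0(A_\bullet))$, we may localize and assume every finitely generated projective $\pi_0(A_\bullet)$-module is free. Base change along $A_\bullet \to \pi_0(A_\bullet)$ gives an equivalence between finite projective $A_\bullet$-modules and finite projective $\pi_0(A_\bullet)$-modules, so in this local situation every finite projective $A_\bullet$-module is equivalent to some $A_\bullet^{\oplus n}$. Writing $\mathrm{Proj}(A_\bullet)^\simeq$ for the $\infty$-subgroupoid of $\Perf(A_\bullet)^\simeq$ spanned by finite projective modules, the map in the statement factors as
$$
\coprod_{n\ge 0} BGL_n(A_\bullet) \longrightarrow \mathrm{Proj}(A_\bullet)^\simeq \longrightarrow \Perf(A_\bullet)^\simeq,
$$
and the first arrow is an equivalence of $\mathbf{E}_\infty$-monoids in our local setting: it is essentially surjective by the previous sentence, and on automorphism monoids it identifies $GL_n(A_\bullet)$ with $\mathrm{Aut}(A_\bullet^{\oplus n})$ by the defining pullback square for $GL_n$. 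Applying group completion therefore yields an equivalence $K(\coprod_{n\ge 0} BGL_n(A_\bullet)) \simeq K(\mathrm{Proj}(A_\bullet)^\simeq)$.

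It remains to show that the second arrow induces an equivalence $K(\mathrm{Proj}(A_\bullet)^\simeq) \simeq K(A_\bullet) := K(\Perf(A_\bullet))$. For a discrete ring this is the classical comparison between Quillen $K$-theory of projective modules and Waldhausen $K$-theory of perfect complexes, proved via bounded projective resolutions and additivity. For a general animated ring this is the main technical point: both functors $A_\bullet \mapsto K(\mathrm{Proj}(A_\bullet)^\simeq)$ and $A_\bullet \mapsto K(\Perf(A_\bullet))$ commute with sifted colimits of animated rings and agree on smooth $\mathbf{Z}$-algebras, so both compute the left Kan extension of $K$-theory from smooth $\mathbf{Z}$-algebras described in \cref{P:3.7}, and therefore coincide. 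This last identification is the hardest step; the remainder is a formal Zariski-local trivialization.
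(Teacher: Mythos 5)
Your proposal follows a different decomposition than the paper, and while the first two steps are sound, the final and admittedly "hardest" step has a genuine gap.

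The paper's proof is more direct: it cites \cite[9.39]{BGT} to identify the infinite loop space of $K(\coprod_n BGL_n(A_\bullet))$ with $\mathbf{Z}\times B\widehat{GL}(A_\bullet)^+$, then cites \cite[9.40]{BGT} which shows the canonical map $\mathbf{Z}\times B\widehat{GL}(A_\bullet)^+ \to \mathbf{Z}\times_{K_0(A_\bullet)} K(A_\bullet)$ is an equivalence for simplicial rings. This reduces the lemma to the observation that $K_0(A_\bullet) = K_0(\pi_0 A_\bullet)$ and that every element of $K_0$ is Zariski-locally in the image of $\mathbf{Z}$. So the comparison between group-completion $K$-theory and Waldhausen $K$-theory of perfect complexes for animated rings---which is exactly what your final step is trying to supply---is taken off the shelf from \cite{BGT}, and the localization argument only has to handle the $K_0$ discrepancy.

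Your route instead factors through $\mathrm{Proj}(A_\bullet)^\simeq$ and then appeals to a Kan-extension characterization of both sides. The gap is in the claim that $A_\bullet \mapsto K(\mathrm{Proj}(A_\bullet)^\simeq)$ commutes with sifted colimits of animated rings and hence equals the left Kan extension from smooth $\mathbf{Z}$-algebras. While group completion is a left adjoint and preserves colimits, it is not clear that $A_\bullet \mapsto \mathrm{Proj}(A_\bullet)^\simeq$ commutes with geometric realizations; the paper's related remark about $GL_n$ and $\widehat{GL}$ being Kan extensions rests on \cite[A.0.4]{EHKSY}, whose hypotheses (reflecting equivalences) would need to be checked for $\mathrm{Proj}(-)^\simeq$ and don't obviously transfer. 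Likewise, the claim that $K(\Perf(-))$ commutes with sifted colimits is essentially equivalent to the Bhatt--Lurie result itself, so invoking "both sides commute with sifted colimits" is circular unless you first prove the commutation for $K(\mathrm{Proj}(-)^\simeq)$ independently. There is also a structural mismatch: your reduction to free modules is a stalk-local move, but the Kan-extension comparison you then invoke is a global statement about the functor on all animated rings, so you should either run the Kan-extension argument for $K(\coprod_n BGL_n(-))$ (for which the paper's remark supplies the needed input) or carry out the $K(\mathrm{Proj}) \simeq K(\Perf)$ comparison directly for the animated ring at hand. In short, you have correctly isolated where the work lies, but the sketch of that step relies on facts that are either unproved or are the very theorems being cited in the paper's proof.
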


\begin{rem} The notion of a sheaf taking values in the $\infty $-category of spaces is introduced in \cite[6.2.2.6]{LurieHT}.  In \cite[6.2.2.7]{LurieHT} it is shown that the $\infty $-category of sheaves can be viewed as a localization  of the $\infty $-category of presheaves, which implies that the inclusion of sheaves into presheaves has a left adjoint -- this is what we refer to as \emph{sheafification}. 
\end{rem}

\begin{proof}
As in \cite[9.39]{BGT} the infinite loop space associated to 
 $K(\coprod _nBGL_n(A_\bullet ))$ is isomorphic to
$$
\mathbf{Z}\times B\widehat {GL}(A_\bullet )^+.
$$
where $B\widehat {GL}(A_\bullet )^+$ denotes Quillen's plus construction.  In particular, we get a map
$$
\mathbf{Z}\times B\widehat {GL}(A_\bullet )^+\rightarrow \mathbf{Z}\times _{K_0(A_\bullet )}K(A_\bullet ).
$$
By \cite[9.40]{BGT} this map is an equivalence.  To prove the lemma it therefore suffices to observe that every object of $K_0(A_\bullet ) = K_0(\pi _0(A_\bullet ))$ (isomorphism given as in \cite[2.16]{KST}) is locally in the image of $\mathbf{Z}$, which is immediate.
\end{proof}

\begin{rem} Note that the functors $GL_n(-)$ and $\widehat {GL}(-)$ reflect weak equivalence and therefore induce functors
$$
\text{CAlg}_{\mathbf{Z}}^\Delta \rightarrow (\mathbf{E}_\infty -\text{monoids}).
$$
By \cite[A.0.4]{EHKSY} these functors are equal to the Kan extensions of their restrictions to smooth algebras.
\end{rem}

 

\begin{pg} 
Let $\mls Pic^{\mathbf{Z}}(A_\bullet )$ denote the Zariski sheafification of the grouplike $\mathbf{E}_\infty $-monoid given by 
$$
\mathbf{Z}\times BGL_1(A_\bullet )
$$
with monoidal structure given by addition on $\mathbf{Z}$ and multiplication on $BGL_1(A_\bullet )$ but with signed commutativity constraint as in \cite[12.2 (iii)]{BS}.

There is a projection map
$$
\mls Pic^{\mathbf{Z}}(A_\bullet )\rightarrow \mathbf{Z}
$$
with fiber $BGL_1(A_\bullet )$.

Here $\mathbf{Z}$ should be understood as the global sections of the Zariski sheaf associated to the constant sheaf on $\Sp (\pi _0(A_\bullet ))$.

Note that with this definition $\mls Pic^{\mathbf{Z}}(A_\bullet )$ forms a Zariski sheaf on $\Sp (\pi _0(A_\bullet ))$.  One can also describe $\mls Pic ^{\mathbf{Z}}(A_\bullet )$ as the Picard spectrum associated to the symmetric monoidal $\infty $-category of finitely generated projective $A_\bullet $-modules (see \cref{R:6.4} below). Note also that $\mls Pic ^{\mathbf{Z}}(-)$ extends to a functor on $\text{CAlg}_{\mathbf{Z}}^\Delta$.
\end{pg}

\begin{pg}
The determinant map on $GL_n(A_\bullet )$ defines a map of symmetric monoidal $\infty $-groupoids
$$
\det :\coprod _{n\geq 0}BGL_n(A_\bullet )\rightarrow \mls Pic^{\mathbf{Z}}(A_\bullet ).
$$
For $A_\bullet $ an ordinary ring this is immediate, and since both sides are left Kan extensions of their restrictions to smooth algebras we get the map also for simplicial rings.
Since the target is grouplike this induces a map
$$
K(\coprod _{n\geq 0}BGL_n(A_\bullet ))\rightarrow \mls Pic^{\mathbf{Z}}(A_\bullet ).
$$
Passing to the associated sheafifications we get a map
$$
\det :K(A_\bullet )\rightarrow \mls Pic^{\mathbf{Z}}(A_\bullet ).
$$
We call this map, as well as the corresponding map
\begin{equation}\label{E:4.6.1}
\det :\mls D_\perf (A_\bullet )^\simeq \rightarrow \mls Pic^{\mathbf{Z}}(A_\bullet )
\end{equation}
the \emph{determinant map}.
\end{pg}

\section{Perfect complexes on ringed sites}\label{S:section4}

\begin{pg} Let $(\BaseSite , \mls O)$ be a ringed site.  For a simplicial $\mls O$-algebra $A_\bullet $ we have the corresponding stable $\infty $-category $\mls D(\BaseSite , A_\bullet )$.  Let 
$$
\mls D_{\perf}^{\text{\rm strict}}(\BaseSite, A_\bullet )\subset \mls D(\BaseSite, A_\bullet )
$$
denote the smallest stable $\infty $-subcategory containing $A_\bullet $ and which is closed under retracts, and define
$$
\mls D_\perf (\BaseSite, A_\bullet )\subset \mls D(\BaseSite, A_\bullet )
$$
to be the $\infty $-subcategory of objects $M$ for which there exists a collection of objects $\{U_i\}_{i\in I}$ covering the final object of the topos associated to $\BaseSite $ such that the restriction of $M$ to each $U_i$ lies in $\mls D^{\text{\rm strict}}_{\perf }(\BaseSite |_{U_i}, A_\bullet |_{U_i})\subset \mls D (\BaseSite |_{U_i}, A_\bullet |_{U_i}).$  So we have
$$
\mls D^{\text{strict}}_{\perf}(\BaseSite, A_\bullet )\subset \mls D_{\perf }(\BaseSite, A_\bullet )\subset \mls D(\BaseSite, A_\bullet ).
$$
\end{pg}

\begin{pg}
Let 
\begin{equation}\label{E:thecats} 
\underline {\mls D}_\perf ^{\mathrm{strict}} \ \ (\text{resp.} \ \uPerf, \uPerf ^\prime )
\end{equation}
be the presheaf of symmetric monoidal $\infty $-categories which to any $U\in \BaseSite $ associates 
$$
\mls D^{\text{strict}}_\perf (\BaseSite |_U, A_{\bullet }|_U), \ \ (\text{resp.} \ \mls D_\perf (\BaseSite |_U, A_{\bullet }|_U), \mls D_\perf (A_\bullet (U)).
$$
There are natural maps
\begin{equation}\label{E:5.2.1}
\uPerf ^\prime \rightarrow \underline {\mls D}_\perf ^{\text{strict}}\rightarrow \uPerf .
\end{equation}
\end{pg}

\begin{rem} 
In the case when $A_\bullet = \mls O$ is a sheaf of (ordinary) rings,  the sheaf $\uPerf $ is equivalent to the sheaf that associates to any $U$ the $\infty $-category of perfect complexes of $\mls O$-modules on $U$ in the sense of \cite[Tag 08FL]{stacks-project}.    
\end{rem}

\begin{lem}\label{L:5.4b}  The presheaf $\uPerf $ is a sheaf, and both the maps
$$
\uPerf ^\prime \rightarrow \uPerf, \ \ \underline {\mls D}_\perf ^{\text{\rm strict}}\rightarrow \uPerf 
$$
induce equivalences upon sheafification.
\end{lem}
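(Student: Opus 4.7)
The plan is to verify the three assertions in order, reducing each to a local statement about sections and morphism spaces.

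For the sheaf property of $\uPerf$, we use that the ambient presheaf $\underline{\mls D}(\BaseSite, A_\bullet)$ satisfies descent (this is essentially the definition of the hypercomplete module category as in \ref{A:thm1}). Since $\uPerf$ is a full $\infty$-subcategory of $\underline{\mls D}(\BaseSite, A_\bullet)$, the descent of mapping spaces is inherited. For descent of objects, suppose $\{U_i \to U\}$ is a cover and $E \in \underline{\mls D}(\BaseSite|_U, A_\bullet|_U)$ has each restriction $E|_{U_i} \in \uPerf(U_i)$. By definition of $\uPerf$, each $E|_{U_i}$ admits a further cover $\{V_{ij} \to U_i\}$ on which it lands in $\underline{\mls D}_\perf^{\mathrm{strict}}$. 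Composing covers exhibits $E$ itself as locally strict, hence $E \in \uPerf(U)$.

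The equivalence $\underline{\mls D}_\perf^{\mathrm{strict}} \to \uPerf$ after sheafification is tautological. Full faithfulness holds already at the presheaf level, as both are full subcategories of $\underline{\mls D}(\BaseSite, A_\bullet)$. Essential surjectivity after sheafification is exactly the defining property of $\uPerf$: every section of $\uPerf(U)$ is locally in $\underline{\mls D}_\perf^{\mathrm{strict}}$.

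The main work lies in $\uPerf^\prime \to \uPerf$. By the preceding step it suffices to show that the composite $\uPerf^\prime \to \underline{\mls D}_\perf^{\mathrm{strict}}$ becomes an equivalence after sheafification. For essential surjectivity, we argue by induction on the construction of $\underline{\mls D}_\perf^{\mathrm{strict}}$ as the smallest stable idempotent-complete subcategory containing $A_\bullet$. The generator is obviously realized by $A_\bullet(U) \in \mls D_\perf(A_\bullet(U))$. For an object built as a (co)fiber of $f : M \to N$ between objects already locally in the image of $\uPerf^\prime$, pass to a refined cover $\{V \to U\}$ trivializing $M$ and $N$ to algebraic objects $M_V, N_V \in \mls D_\perf(A_\bullet(V))$; the morphism $f|_V$ is then a section of the mapping sheaf $\pi_0 \underline{\mathrm{RHom}}(M_V, N_V)$, and since this is a sheaf, on a further refinement the class lifts to an actual morphism in $\mls D_\perf(A_\bullet(W))$ whose algebraic cofiber realizes $\mathrm{cofib}(f)|_W$. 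Retracts are handled similarly, using that $\mls D_\perf(A_\bullet(W))$ is idempotent complete. For full faithfulness, the mapping space in $\underline{\mls D}_\perf^{\mathrm{strict}}$ is computed by the global sections of the internal $\underline{\mathrm{RHom}}$-sheaf (using the dg-model of \ref{A:appendixA}), and the natural map from the algebraic $\mathrm{RHom}$ in $\mls D_\perf(A_\bullet(U))$ is the unit of sheafification applied to this $\underline{\mathrm{RHom}}$, which becomes an equivalence after sheafifying.

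The main obstacle is the bookkeeping in the inductive step: ensuring that the covers produced at each stage can be compatibly refined, and that idempotent splittings in $\uPerf$ can be matched with algebraic splittings on a common refinement. Both reduce to the fact that the relevant data (morphisms and idempotents) live in mapping spectra that are themselves sheaves, so any coherence datum specified locally becomes truly algebraic after passing to a small enough cover.
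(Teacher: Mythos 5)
Your handling of the sheaf property of $\uPerf$ and of the map $\underline{\mls D}_\perf^{\mathrm{strict}} \to \uPerf$ is fine and matches the paper's (brief) treatment. The problem is in the argument for $\uPerf^\prime \to \uPerf$.

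The full-faithfulness step, where you write that ``the natural map from the algebraic $\mathrm{RHom}$ in $\mls D_\perf(A_\bullet(U))$ is the unit of sheafification applied to $\underline{\mathrm{RHom}}$, which becomes an equivalence after sheafifying,'' is circular. Saying a map is the unit of sheafification \emph{is} saying that the target is the sheafification of the source. So ``it becomes an equivalence after sheafification'' is a tautology once one has the first claim --- but the first claim is precisely what needs proof, namely that the sheafification of the presheaf $V \mapsto \mathrm{Map}_{\mls D_\perf(A_\bullet(V))}(x_V, y_V)$ is the mapping sheaf $\underline{\mathrm{Map}}_{\uPerf}(F_x, F_y)$. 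Nothing in the proposal establishes this.

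The paper fills this gap with a thick-subcategory swindle. Fix $y$ and let $\mls S \subset \uPerf^\prime(U)$ be the full subcategory of those $x$ for which the sheafified comparison map on mapping spaces is an equivalence (for all $y$). Because sheafification commutes with finite limits, $\mls S$ is a stable subcategory closed under retracts, so one only has to check $A_\bullet(U) \in \mls S$. For $x = A_\bullet(U)$ the presheaf side is $V \mapsto \tau_{\leq 0}(y \otimes_{A_\bullet(U)} A_\bullet(V))$ and the sheaf side is $\tau_{\leq 0}$ of the sheafification of $V \mapsto y \otimes_{A_\bullet(U)} A_\bullet(V)$; the comparison then reduces to the fact that sheafification commutes with $\tau_{\leq 0}$. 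This concrete reduction is the actual content of the lemma and is missing from your write-up.

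A secondary issue of ordering: your essential-surjectivity induction (passing to refinements to lift the morphism $f$, then forming the algebraic cofiber) implicitly uses that the sheafified mapping spaces agree --- that is what lets you promote a section of $\pi_0\underline{\mathrm{RHom}}$ to an honest morphism in $\mls D_\perf(A_\bullet(W))$ on a refinement. So full faithfulness must be established first; your write-up presents the steps in the wrong logical order. Once the $\mls S$ argument is in place, the inductive surjectivity argument works as you describe (and is in fact a useful expansion of the paper's terse remark that ``locally every object is evidently in the image'').
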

\begin{proof}
The statements that $\uPerf $ is a sheaf and that 
$$
\underline {\mls D}_\perf ^{\text{strict}}\rightarrow \uPerf
$$
induces an equivalence upon sheafification follows immediately from the definition of $\uPerf $.

To prove the lemma we therefore show that the map
$$
\uPerf ^\prime \rightarrow \uPerf
$$
induces an equivalence upon sheafification.  Let $\uPerf ^{\prime a}$ denote the sheaf associated to $\uPerf ^{\prime }$.

Since sheafification commutes with finite limits \cite[6.2.2.7]{LurieHT}, for two objects $x, y\in \uPerf ^\prime (U)$ with associated objects $x^a, y^a\in \uPerf ^{\prime a}$ and  $F_x, F_y\in \uPerf (U)$, the sheaf 
$$
\underline {\text{Map}}_{\uPerf ^{\prime a}}(x^a, y^a)
$$
is equal to the sheaf associated to the presheaf 
\begin{equation}\label{E:thepresheaf}
V\mapsto \text{Map}_{\mls D_\perf (A_\bullet (V))}(x\otimes _{A_\bullet (U)}A_\bullet (V), y\otimes _{A_\bullet (U)}A_\bullet (V)).
\end{equation}
Let $\mls S$ denote the subcategory of $\uPerf ^{\prime }(U)$ of those objects $x\in \uPerf ^{\prime }(U)$ for which the map
$$
\underline {\text{Map}}_{\uPerf ^{\prime a}}(x^a, y^a)\rightarrow \underline {\text{Map}}_{\uPerf }(F_x, F_y)
$$
is an equivalence for all $y\in \uPerf ^{\prime }(U)$.  Then $\mls S$ is a stable subcategory closed under retracts, so to show that $\mls S$ is equal to all of $\uPerf ^{\prime }(U)$ it suffices to show that $A_\bullet (U)$ is in $\mls S$.  

For $y\in \uPerf ^{\prime }(U)$ with associated object $F_y\in \uPerf (U)$ and $x = A_\bullet (U)$ the  presheaf \eqref{E:thepresheaf} is given by sending $V$ to 
$$
\tau _{\leq 0}(y\otimes _{A_\bullet (U)}A_\bullet (V)),
$$
where $y\otimes _{A_\bullet (U)}A_\bullet (V)$ is viewed as an object of $\mls D(\text{Ab})$ (the derived $\infty $-category of abelian groups).  On the other hand, the sheaf
$$
\underline {\text{Map}}_{\uPerf }(A_\bullet ^a, F_y)
$$
is given by applying $\tau _{\leq 0}$ to the sheafification of 
$$
V\mapsto y\otimes _{A_\bullet (U)}A_\bullet (V).
$$
Thus the statement that $A_\bullet (U) \in \mls S$ amounts to the observation that sheafification commutes with the functor $\tau _{\leq 0}$.    We conclude that $\mls S = \uPerf ^{\prime }(U).$

It follows that 
$$
\uPerf ^{\prime a}\rightarrow \uPerf 
$$
induces an equivalence on mapping spaces, and since locally every object is evidently in the image we conclude that this map is an equivalence.
\end{proof}

\begin{pg}\label{P:4.6}
The $\infty $-category $\mls D_\perf (\BaseSite, A_\bullet )$ is given by the global sections
$$
\Gamma (\BaseSite, \uPerf ).
$$
Let $\mls Pic ^{\mathbf{Z}}_{(\BaseSite, A_\bullet )}$ denote the global sections of the sheaf associated to the presheaf $\underline {\mls Pic}^{\mathbf{Z}}_{A_\bullet }$ given by
$$
U\mapsto \mls Pic ^{\mathbf{Z}}(A_\bullet (U)).
$$
Using \eqref{E:4.6.1} we then obtain a diagram 
$$
\xymatrix{
\uPerf ^{\prime, \simeq }\ar[r]\ar[d]^-{\det }& \uPerf ^\simeq \\
\underline {\mls Pic}_{\mls O}^{\mathbf{Z}}.&}
$$
Passing to the associated sheaves and using \cref{L:5.4b} we get an induced map 
\begin{equation}\label{E:3.6.1}
\det :\Perf {(\BaseSite, A_\bullet )}^\simeq \rightarrow \mls Pic^{\mathbf{Z}}_{(\BaseSite, A_\bullet )}.
\end{equation}
By functoriality of $K$-theory this map factors through the $K$-theory of $\Perf {(\BaseSite, A_\bullet )}$, which we denote by $K(\BaseSite, A_\bullet )$.
\end{pg}
\begin{pg} In the case when $A_\bullet = \mls O$ we can describe $\mls Pic ^{\mathbf{Z}}_{(\BaseSite, \mls O)}$ more explicitly as follows.  Let $\mathbf{Z}_{(\BaseSite, \mls O)}$ be the sheaf associated to the presheaf which to any $U\in \BaseSite$ associated the set of locally constant $\mathbf{Z}$-valued functions on $\Sp (\mls O(U))$.   Note that for any section $r\in \mathbf{Z}_{(\BaseSite, \mls O)}(U)$ the expression $$
(-1)^r\in \mls O^*(U)
$$
makes sense.  Then the sheaf $\underline {\mls Pic}_{(\BaseSite, \mls O)}^{\mathbf{Z}}$ associated to the presheaf $\underline {\mls Pic}^{\mathbf{Z}}_{\mls O}$ can be described as the stack in groupoids which to any $U$ associates the groupoid of pairs $(r, \mls L)$, where $r\in \mathbf{Z}_{(\BaseSite, \mls O)}(U)$ and $\mls L$ is an invertible module on $(\BaseSite|_U, \mls O)$, in the sense of \cite[Tag 0408]{stacks-project}.  The monoidal structure is given by
$$
(r, \mls L)* (r', \mls L'):= (r+r', \mls L\otimes \mls L')
$$
and the commutativity constraint
$$
(r, \mls L)*(r', \mls L')\simeq (r', \mls L')*(r, \mls L)
$$
is given by the isomorphism
$$
\mls L\otimes \mls L'\simeq \mls L'\otimes \mls L
$$
obtained by multiplying the isomorphism switching the factors with $(-1)^{rr'}.$

In particular, for any perfect complex $E$ on $(\BaseSite, \mls O)$ we can speak about its determinant $\det (E)$, an invertible $\mls O$-module.  
\end{pg}

\begin{pg}\label{P:3.8}
It will be useful to have a variant description of $\uPerf '$.

Let $\BaseSite^\zar $ denote the category whose objects are pairs $(U, V)$, where $U\in \BaseSite$ and $V\subset \Sp (\mls O(U))$ is an affine open set.  A morphism 
$$
(U', V')\rightarrow (U, V)
$$
is defined to be a morphism $f:U'\rightarrow U$ in $\BaseSite$ such that the induced morphism
$$
\Sp (\mls O(U'))\rightarrow \Sp (\mls O(U))
$$
sends $V'$ to $V$.  The \emph{Zariski topology} on $\BaseSite^\zar $ is defined by declaring a collection of morphisms
$$
\{f_i:(U_i, V_i)\rightarrow (U, V)\}
$$
a covering if each $f_i:U_i\rightarrow U$ is an isomorphism, and the collection of maps
$$
\{V_i\rightarrow V\}
$$
is an open covering of $V$.  With this definition the category of sheaves on $\BaseSite^\zar $ is equivalent to the category of collections of sheaves $\{F_U\}_{U\in \BaseSite}$, where $F_U$ is a sheaf on $\Sp (\mls O(U))$, together with transition morphisms $\theta _f:f^{-1}F_{U}\rightarrow F_{U'}$ for each morphism $f:U'\rightarrow U$ in $\BaseSite$, satisfying the natural cocycle condition.

A presheaf $F$ (of sets, $\infty $-categories etc.) on $\BaseSite^\zar $ induces a presheaf on $\BaseSite$ by composing $F$ with the functor
$$
\BaseSite\rightarrow \BaseSite^\zar , \ \ U\mapsto (U, \Sp (\mls O(U))).
$$
We denote this presheaf on $\BaseSite$ by $\gamma (F)$.

The presheaves of symmetric monoidal $\infty $-categories on $\BaseSite$
$$
\uPerf ', \ \ \underline {\mls Pic}^{\mathbf{Z}}_{\mls O}
$$
then extend to presheaves
$$
\uPerf ^{\prime \zar }, \ \ \underline {\mls Pic}^{\mathbf{Z}, \zar }_{\mls O}
$$
on $\BaseSite^\zar $, where $\uPerf ^{\prime \zar }$ sends $(U, V)$ to the category of strictly perfect complexes of $\mls O_{\Sp (\mls O(U))}(V)$-modules and $\underline {\mls Pic}^{\mathbf{Z}, \zar }_{\mls O}$ sends $(U, V)$ to the groupoid of $\mathbf{Z}$-graded line bundles on the scheme $V$.   Observe that we have
$$
\uPerf ' = \gamma (\uPerf ^{\prime , \zar }), \  \ \underline {\mls Pic}^{\mathbf{Z}}_{\mls O} = \gamma (\underline {\mls Pic}^{\mathbf{Z}, \zar }_{\mls O}).
$$

Note also that $\mls O$ extends to a presheaf of rings on $\BaseSite^\zar $, which we will denote by $\mls O^\zar $, given by
$$
\mls O^\zar (U, V) = \Gamma (V, \mls O_{\Sp (\mls O(U))}(V)).
$$

Similarly, any complex $I^\bullet $ of presheaves of $\mls O$-modules extends to a complex of $\mls O^\zar $-modules, which we will denote by $I^{\zar , \bullet }$, given by
$$
I^{\zar, \bullet }(U, V) = \widetilde {I (U)}(V),
$$
where $\widetilde {I (U)}$ denotes the complex of quasi-coherent sheaves on $\Sp (\mls O(U))$ associated to the complex of $\mls O(U)$-modules $I (U)$.

Define $BGL_n(\mls O^\zar )$ to be the presheaf on $\BaseSite^\zar $ which to any $(U, V)$ associates $BGL_n(\mls O^\zar (U, V))$.  We then have a natural map
$$
\coprod _nBGL_n(\mls O^\zar )\rightarrow \uPerf ^{\prime , \zar , \simeq }
$$
of presheaves of symmetric monoidal $\infty $-categories.  The determinant map also extends to a map
$$
\mdet ^\zar :\uPerf ^{\prime ,\zar , \simeq }\rightarrow \underline {\mls Pic}^{\mathbf{Z}, \zar }_{\mls O}
$$
inducing the previously defined determinant map after applying $\gamma $.  The advantage of working with presheaves on $\BaseSite^\zar $ is that by \cref{L:2.3} the map $\mdet ^\zar $, and hence also the determinant map \eqref{E:3.6.1}, is determined by the induced map
$$
\coprod _nBGL_n(\mls O^\zar )\rightarrow \underline {\mls Pic}^{\mathbf{Z}, \zar }_{\mls O}.
$$

More generally for a complex $I^\bullet $ of presheaves of $\mls O$-modules we can consider $\mls O^\zar [I^{\zar , \bullet }]$ on $\mathbf{S}^\zar $ and the determinant defines a map
$$
\mdet ^\zar :\underline {\mls D}_{\text{\rm perf}, \mls O[I^\bullet ]}^{\prime ,\zar , \simeq }\rightarrow \underline {\mls Pic}^{\mathbf{Z}, \zar }_{\mls O[I^\bullet ]}.
$$
\end{pg}

\section{Ring structure}\label{S:section5}

As pointed out to us by Bhargav Bhatt, the results of the previous section can profitably be upgraded to include statements about the ring structure on algebraic $K$-theory.  The following is a modification of an argument communicated to us by Bhatt, which answers, in particular, a question of R\"ossler\footnote{\url{https://mathoverflow.net/questions/354214/determinantal-identities-for-perfect-complexes}} which was also discussed in the Stacks Project\footnote{\url{https://www.math.columbia.edu/~dejong/wordpress/?p=4474}}.  The results of this section will not be used in what follows.

\begin{pg}
The ring structure on algebraic $K$-theory can be described in a few ways.  Most convenient for us is the description in \cite[8.6]{GGN} (see also \cite{BGT}).  This result is obtained from \cite[5.1]{GGN} which gives that the group completion functor
$$
\xymatrix{
(\text{$\mathbf{E}_\infty $-monoids})\rightarrow (\text{grouplike $\mathbf{E}_\infty $-monoids})}\simeq \mathrm{Sp}^{\geq 0}
$$
extends to a  symmetric monoidal functor.  Here the monoidal structure on $\mathrm{Sp}^{\geq 0}$ is given by the smashproduct of spectra (see \cite[5.3 (ii)]{GGN}). 

For a ringed site $(\BaseSite, \mls O)$ the underlying groupoids of  the objects \eqref{E:thecats} have the structure of presheaves of $\mathbf{E}_\infty $-semirings (see \cite[page 2, (iii)]{GGN}), and the diagram \eqref{E:5.2.1}
is compatible with this structure.

This implies, in particular, that the $K$-theory $K(\BaseSite, \mls O)$ has the structure of an $\mathbf{E}_\infty $-ring spectrum \cite[8.12 (i) and 8.13]{GGN}.
\end{pg}

\begin{pg}
The Picard category $\mls Pic ^{\mathbf{Z}}_{(\BaseSite, \mls O)}$ also has a multiplicative structure given by
\begin{equation}\label{E:5.1.1}
(r, \mls L)\otimes (r', \mls L'):= (rr', \mls L^{\otimes r'}\otimes \mls L^{\prime \otimes r}).
\end{equation}
Note here that $\mls L^{\otimes r'}$ and $\mls L^{\prime \otimes r}$ are defined by first defining them on the level of modules over rings and then globalizing.

This multiplicative structure can be upgraded to a structure of an $\mathbf{E}_\infty $-ring spectrum as follows.
\end{pg}

\begin{pg} 
Let $\BaseSite^\zar $ be as in \cref{P:3.8}, and consider again the  functors
$$
\coprod _nBGL_n:\BaseSite^{\zar , \op} \rightarrow (\text{$\mathbf{E}_\infty $-monoids}), \ \ (U, V)\mapsto \coprod _nBGL_n(\mls O^\zar (U, V)),
$$
and 
$$
\mathbf{Z}\times BGL_1:\BaseSite^{\zar , \op} \rightarrow (\text{$\mathbf{E}_\infty $-monoids}), \ \ (U, V)\mapsto \mathbf{Z}\times BGL_1(\mls O^\zar (U, V)).
$$

Let
$$
\mls K^\zar :\BaseSite^{\zar , \op} \rightarrow (\text{$\mathbf{E}_\infty $-monoids})
$$
be the sheafification of the group completion of $\coprod _nBGL_n$ and note that $\underline {\mls Pic}_{\mls O}^{\mathbf{Z}, \zar}$ is the sheafification of $\mathbf{Z}\times BGL_1$. Then the determinant defines a map of sheaves of $\mathbf{E}_\infty $-monoids
$$
\det :\mls K^\zar \rightarrow \underline {\mls Pic}_{\mls O}^{\mathbf{Z}, \zar}.
$$
\end{pg}

\begin{lem}\label{R:6.4} The determinant map induces an equivalence
$$
\tau _{\leq 1}\mls K^\zar \simeq \underline {\mls Pic}_{\mls O}^{\mathbf{Z}, \zar}.
$$
\end{lem}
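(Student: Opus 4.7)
The plan is to verify the equivalence by comparing sheaves of homotopy groups. Both sides take values in $1$-truncated connective spectra: the sheaf $\underline{\mls Pic}_{\mls O}^{\mathbf{Z}, \zar}$ is the sheafification of the Picard groupoid $\mathbf{Z} \times BGL_1$, hence has vanishing $\pi_i$ for $i \geq 2$, and the left-hand side is $1$-truncated by construction. It therefore suffices to show that $\mdet$ induces an isomorphism on the sheaves $\pi_0$ and $\pi_1$. Since sheafification, group completion, and $\tau_{\leq 1}$ are all left adjoints, they commute with the formation of $\pi_0$ and $\pi_1$ in the required sense, so these sheaves can be computed presheafwise and then sheafified.

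First, I would identify the presheaf-level maps on $\pi_0$ and $\pi_1$. For any ring $R = \mls O^\zar(U,V)$, the group completion of $\coprod_{n \geq 0} BGL_n(R)$ is the classical connective $K$-theory spectrum, with $\pi_0 = K_0(R)$ and $\pi_1 = K_1(R)$. The symmetric monoidal functor $\coprod_n BGL_n \to \mathbf{Z} \times BGL_1$ sends $A \in GL_n(R)$ to the pair $(n, \det A)$, so after group completion it induces the rank map $K_0(R) \to \mathbf{Z}$ on $\pi_0$ and the classical determinant $K_1(R) \to R^\times$ on $\pi_1$.

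Next, I would verify that these maps become isomorphisms after Zariski sheafification on $\BaseSite^\zar$. It suffices to work at stalks at points of the affine schemes $\Sp(\mls O(U))$, where the stalk of $\mls O^\zar$ is a commutative local ring $R$. Two classical facts then apply. First, every finitely generated projective module over a local ring is free, which gives $K_0(R) \simeq \mathbf{Z}$ via the rank. Second, Whitehead's lemma for local rings (equivalently, that $SL_n(R) = E_n(R)$ for $n \geq 2$) yields $SK_1(R) = 0$, so the composition $R^\times \hookrightarrow K_1(R) \to R^\times$, with the second map the determinant, is the identity and hence $\det$ is an isomorphism. Matching these with the $\pi_i$ of $\underline{\mls Pic}_{\mls O}^{\mathbf{Z}, \zar}$, namely the constant sheaf $\mathbf{Z}$ and the sheaf of units in $\mls O^\zar$, completes the check.

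The only real subtlety is the commutation of $\pi_i$ with sheafification and group completion, which follows formally from the fact that each of these operations is a left adjoint. Once this is in hand, the identification of the induced map on $\pi_1$ with the classical determinant is immediate from the definition of $\det$ on the monoidal level, and the problem reduces to the classical $K$-theory computations recalled above.
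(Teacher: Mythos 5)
Your proof is correct and follows essentially the same path as the paper's: reduce to Zariski sheaves on an affine scheme, pass to stalks (local rings), and invoke $K_0(R)=\mathbf{Z}$ and $K_1(R)=R^\times$. One small imprecision: group completion does \emph{not} commute with homotopy groups (that would trivialize $K$-theory); what you actually use, and what is correct, is that sheafification of sheaves of spectra commutes with formation of homotopy sheaves, together with the fact that the group completion of $\coprod_n BGL_n(R)$ has $\pi_0 = K_0(R)$ and $\pi_1 = K_1(R)$ by definition of $K$-theory.
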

\begin{proof}
It suffices to show that for a given $U\in \BaseSite $ the induced map of sheaves on $\Sp (\mls O(U))$ is an equivalence.  This reduces the proof of the lemma to the case of the Zariski topology of an affine scheme.  The verification in this case reduces immediately to the calculation of $K_0$ and $K_1$ for a local ring, and for such a ring $R$ we have $K_0(R) = \mathbf{Z}$ and $K_1(R) = R^*$.
\end{proof}

\begin{pg} Now observe that $\mls K^\zar $ has the structure of a sheaf of $\mathbf{E}_\infty $-rings, and therefore so does $\tau _{\leq 1}\mls K^\zar $.  In this way, $\underline {\mls Pic}_{\mls O}^{\mathbf{Z}, \zar }$, and therefore also $\underline {\mls Pic}_{\mls O}^{\mathbf{Z}}$ and $\mls Pic ^{\mathbf{Z}}_{(\BaseSite, \mls O)}$ are given $\mathbf{E}_\infty $-ring structures.  Note also that the underlying multiplication map is induced by the natural maps on $\coprod _nBGL_n$ and $\coprod _n BGL_1$, and therefore the underlying multiplicative structure on $\mls Pic^{\mathbf{Z}}_{(\BaseSite, \mls O)}$ is given by \eqref{E:5.1.1}.

Furthermore, if  $\mls K$ denote the sheaf on $\BaseSite$ associated to the presheaf $\gamma (\mls K^\zar )$.  We then get an equivalence
$$
\tau _{\leq 1}\mls K\simeq \underline {\mls Pic}_{(\BaseSite, \mls O)}^{\mathbf{Z}}.
$$
\end{pg}

This discussion implies the following:

\begin{thm}\label{T:multiplicative} The Picard category $\mls Pic _{(\BaseSite, \mls O)}^{\mathbf{Z}}$ has the structure of an $\mathbf{E}_\infty $-ring spectrum with multiplicative structure given by \eqref{E:5.1.1} and such that the map
\begin{equation}\label{E:5.3.1}
K(\BaseSite, \mls O)\rightarrow \mls Pic ^{\mathbf{Z}}_{(\BaseSite, \mls O)}
\end{equation}
is a map of $\mathbf{E}_\infty $-rings.
\end{thm}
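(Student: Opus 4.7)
The plan is to trace the $\mathbf{E}_\infty$-ring structure from its source on the presheaf of $\mathbf{E}_\infty$-semirings $\coprod_n BGL_n(\mls O^\zar)$ through group completion, truncation, and sheafification, and observe that the determinant is compatible with this structure at every stage.

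First I would make precise that on $\BaseSite^\zar$ the presheaf $\coprod_n BGL_n(\mls O^\zar)$ has a natural $\mathbf{E}_\infty$-semiring structure, the addition coming from direct sum of free modules and the multiplication from tensor product (which distributes over direct sum up to coherent isomorphism since we are dealing with free modules). The three presheaves in \eqref{E:thecats} inherit such semiring structures in a compatible way, as recorded in the first paragraph of \cref{S:section5}. By \cite[5.1]{GGN}, the group completion functor from $\mathbf{E}_\infty$-monoids to grouplike $\mathbf{E}_\infty$-monoids is symmetric monoidal, hence carries $\mathbf{E}_\infty$-semirings to $\mathbf{E}_\infty$-ring spectra. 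Applied pointwise, this gives $\mls K^\zar$ the structure of a presheaf of $\mathbf{E}_\infty$-ring spectra on $\BaseSite^\zar$, and sheafification preserves this structure since it is a left adjoint compatible with finite products.

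Next I would invoke \cref{R:6.4} to transfer the ring structure. The truncation functor $\tau_{\leq 1}$ on connective spectra is symmetric monoidal (it is a smashing localization), so $\tau_{\leq 1}\mls K^\zar$ is a sheaf of $\mathbf{E}_\infty$-rings, and the equivalence
\[
\tau_{\leq 1}\mls K^\zar \simeq \underline{\mls Pic}^{\mathbf{Z},\zar}_{\mls O}
\]
of \cref{R:6.4} then endows the right-hand side with an $\mathbf{E}_\infty$-ring structure. Pulling back by $\gamma$ and sheafifying on $\BaseSite$ yields the structure on $\underline{\mls Pic}^{\mathbf{Z}}_{(\BaseSite,\mls O)}$; taking global sections produces the desired $\mathbf{E}_\infty$-ring structure on $\mls Pic^{\mathbf{Z}}_{(\BaseSite,\mls O)}$. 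The map \eqref{E:5.3.1} factors as the composition of the canonical $\mathbf{E}_\infty$-ring map $\mls K \to \tau_{\leq 1}\mls K$ followed by the determinant equivalence, taken in global sections, so it is automatically a map of $\mathbf{E}_\infty$-rings.

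The main obstacle, and the only part that is not purely formal, will be to identify the resulting multiplication with the explicit formula \eqref{E:5.1.1}. For this I would argue at the level of homotopy groups of the presheaf on $\BaseSite^\zar$ before sheafification. The ring structure on $\mls K^\zar$ is induced by the tensor product pairing
\[
\coprod_n BGL_n(\mls O^\zar) \wedge \coprod_m BGL_m(\mls O^\zar) \to \coprod_k BGL_k(\mls O^\zar),
\]
which on $\pi_0$ is ordinary multiplication of integers and on $\pi_1$ restricted to rank-one components corresponds to the pairing $(u,v) \mapsto uv$ in $\mls O^*$ coming from $GL_1 \otimes GL_1 \to GL_1$. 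After truncation and identification with $\underline{\mls Pic}^{\mathbf{Z},\zar}_{\mls O}$, the ranks multiply as in \eqref{E:5.1.1}, while the line bundle factor of $(r,\mls L)\otimes (r',\mls L')$ is determined by bilinearity and the pairing on $BGL_1$, giving $\mls L^{\otimes r'}\otimes\mls L'^{\otimes r}$; the signs in the commutativity constraint come from the Koszul signs already built into $\underline{\mls Pic}^{\mathbf{Z}}_{\mls O}$ by construction. Once this identification is made locally, it globalizes by \cref{L:5.4b}, completing the proof.
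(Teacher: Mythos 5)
Your proposal follows the same route the paper takes: build the $\mathbf{E}_\infty$-ring structure on the presheaf level over $\BaseSite^\zar$ starting from $\coprod_n BGL_n$, use symmetric monoidality of group completion \`a la \cite[5.1]{GGN}, truncate, and transport the structure along the determinant equivalence of \cref{R:6.4}. Two points in your write-up deserve correction.

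First, $\tau_{\leq 1}$ on connective spectra is \emph{not} a smashing localization (Postnikov truncations are not smashing; for instance $\tau_{\leq 0}(X)\not\simeq X\wedge \tau_{\leq 0}(S)$ in general). What is true, and what you actually need, is that it is a symmetric monoidal localization: a $\tau_{\leq 1}$-equivalence remains one after smashing with a connective spectrum, so the localization functor is lax symmetric monoidal and in particular carries $\mathbf{E}_\infty$-rings to $\mathbf{E}_\infty$-rings. The conclusion is the same but the justification as stated is wrong.

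Second, and more substantively, your claim that \eqref{E:5.3.1} ``factors as the composition of the canonical $\mathbf{E}_\infty$-ring map $\mls K \to \tau_{\leq 1}\mls K$ followed by the determinant equivalence, taken in global sections'' elides the step that the paper spends its proof on. The source $K(\BaseSite, \mls O)$ is the $K$-theory of the global sections $\mls D_\perf(\BaseSite, \mls O) = \Gamma(\BaseSite, \uPerf)$, which is \emph{not} the same object as $R\Gamma(\BaseSite, \mls K)$ (the global sections of the sheafified $K$-theory): $K$-theory does not commute with taking global sections. One must produce a map of $\mathbf{E}_\infty$-rings $K(\BaseSite, \mls O) \to R\Gamma(\BaseSite, \mls K)$, and then compose with $R\Gamma(\BaseSite, \mls K)\to R\Gamma(\BaseSite, \tau_{\leq 1}\mls K)$. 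The paper obtains this first map from the universal property of group completion applied to the identification $\mls D_\perf(\BaseSite, \mls O)^\simeq \simeq R\Gamma(\BaseSite, \underline{\mls D}_\perf^\simeq)$ as $\mathbf{E}_\infty$-semirings, together with the semiring map $R\Gamma(\BaseSite, \underline{\mls D}_\perf^\simeq)\to R\Gamma(\BaseSite, \mls K)$. Adding that step would make your proof complete and essentially identical to the paper's.
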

\begin{proof}
This follows from the preceding discussion,
 and the observation that the map
$$
K(\BaseSite, \mls O)\rightarrow R\Gamma (\BaseSite , \mls K)
$$
is a map of $\mathbf{E}_\infty $-rings, by the universal property of group completion and the fact that the isomorphism
$$
\mls D_\perf (\BaseSite, \mls O)^\simeq  = R\Gamma (\BaseSite, \underline {\mls D}_\perf ^\simeq )
$$
is an isomorphism of $\mathbf{E}_\infty $-semirings.
\end{proof}


\section{The trace map}\label{S:trace}

\begin{pg}
Let $(\BaseSite, \mls O)$ be a ringed site and let $E$ be a perfect complex of $\mls O$-modules.  In this section we record some observations about the trace map, defined in \cite[V, 3.7.3]{Illusie},
$$
\mathrm{tr}_{I_\bullet }:\mls RHom (E, E\lotimes N(I_\bullet) )\rightarrow N(I_\bullet )
$$
for a simplicial $\mls O$-module $I_\bullet $.  This map is obtained as the composition of the inverse of the isomorphism (using the perfection of $E$)
$$
E^\vee \lotimes (E\lotimes N(I_\bullet ))\rightarrow \mls RHom (E, E\lotimes N(I_\bullet ))
$$
and the evaluation map
$$
E^\vee \lotimes (E\lotimes N(I_\bullet ))\rightarrow N(I_\bullet ).
$$
Observe that under the natural identification
$$
\mls RHom (E, E)\lotimes N(I_\bullet )\simeq \mls RHom (E, E\lotimes N(I_\bullet ))
$$
the map $\mathrm{tr}_{I_\bullet }$ is identified with the map $\mathrm{tr}_{\mls O}\otimes N(I_\bullet )$.  We often drop the subscript and write simply $\mathrm{tr}$ for $\mathrm{tr}_{I_\bullet }$ if no confusion seems likely to arise.
\end{pg}

\begin{pg}\label{P:7.2}
Fix a perfect complex $E$. Denote by $\mathrm{Mod}_{\mls O}$ (resp. $\mathrm{Mod}_{\mls O}^{\Delta ^\op }$) the category of sheaves of $\mls O$-modules (resp. sheaves of simplicial $\mls O$-modules).   We then have two functors
$$
F_1, F_2:\mathrm{Mod}_{\mls O}^{\Delta ^\op }\rightarrow (\text{$\mathrm{Sp}$-valued sheaves}) 
$$
given by
$$
F_1 (I_\bullet ) := \mathrm{DK}(\mls RHom (E, E\lotimes N(I_\bullet ))[1])
$$
and 
$$
F_2(I_\bullet ):= BI_\bullet .
$$
The trace map defines a morphism of $\infty$-functors
\begin{equation}\label{E:tracemap}
\mathrm{tr}:F_1\rightarrow F_2.
\end{equation}
Now observe that $F_1$ is the left Kan extension of its restriction to $\mathrm{Mod}_{\mls O}$ (this follows from the observation that the normalization of a simplicial abelian group is quasi-isomorphic to the homotopy colimit), and therefore $\mathrm{tr}$ is determined by the restrictions of these functors to $\mathrm{Mod}_{\mls O}$.

In fact, from the perfection of $E$ we get slightly more.  Namely, note that since $E$ is perfect we have locally
$$
\mls RHom (E, E\lotimes N(I_\bullet )[n+1])\simeq \tau _{\leq 0}\mls RHom(E, E\lotimes N(I_\bullet )[n+1])
$$
for $n$ sufficiently large.   Therefore, if we write $F_1^{\leq 0}$ for the functor
$$
I_\bullet \mapsto \mathrm{DK}(\tau _{\leq 0}\mls RHom (E, E\lotimes N(I_\bullet ))[1])
$$
then $F_1$ is isomorphic to the functor
$$
I_\bullet \mapsto \text{colim}_n\Omega ^nF_1^{\leq 0}(B^nI_\bullet ),
$$
where $B^nI_\bullet $ is the $n$-fold delooping of $I_\bullet $, corresponding under the Dold-Kan correspondence to $N(I_\bullet )[n]$. It follows that a morphism of $\infty $-functors $F_1^{\leq 0}\rightarrow F_2$ can be extended to a morphism $F_1\rightarrow F_2$, and therefore is determined by its restriction to $\text{Mod}_{\mls O}$.
\end{pg}

\begin{pg}
To understand the map $\mathrm{tr}$ on modules, consider first the case of  the punctual topos, a ring $A$, and an $A$-module $I$.   Let $\mathrm{Perf }^\strict _{A[I]}$ denote the category of strictly perfect $A[I]$-modules, viewed as a Waldhausen category as in \cite[3.1]{TT}, and let $\mathrm{EXT}(A, I)$ be the Picard category of short exact sequences of $A$-modules
$$
0\rightarrow I\rightarrow T\rightarrow A\rightarrow 0,
$$
as in \cite[Expos\'e XVIII, 1.4.22]{SGA4}.  Note that by \cite[Expos\'e XVIII 1.4.23]{SGA4} the object of $\mathrm{Sp}^{\geq 0}$ assocated to $\mathrm{EXT}(A, I)$ is $BI$.

There is a determinant map (in the sense of \cref{P:3.5})
$$
\delta :\mathrm{Perf}^\strict _{A[I]}\rightarrow \mathrm{EXT}(A, I).
$$

For an object $E'\in \mathrm{Perf}_{A[I]}^\strict $ with reduction $E\in \mathrm{Perf}^\strict _A$ we get an exact sequence of complexes of $A$-modules
$$
0\rightarrow E\otimes I\rightarrow E'\rightarrow E\rightarrow 0.
$$
Tensoring with $E^\vee $, pulling back along $\mathrm{id}:A\rightarrow E\otimes E^\vee $ and pushing out along the trace map we get an object of $\mathrm{EXT}(A, I)$; in a diagram:
$$
\xymatrix{
0\ar[r]& I\ar[r]& T\ar[r]& A\ar[r]& 0\\
0\ar[r]&E\otimes E^\vee \otimes I\ar@{=}[d]\ar[u]_{\mathrm{tr}}\ar[r]& \mls E'\ar[u]\ar[r]\ar[d]& A\ar@{=}[u]\ar[r]\ar[d]^-{\mathrm{id}}& 0\\
0\ar[r]& E\otimes E^\vee \ar[r]& E'\otimes E^\vee \ar[r]& E\otimes E^\vee \ar[r]& 0.}
$$

To extend this construction to a determinant functor,  note that by \cite[2.3]{Knudsen}  it suffices to define a determinant functor on the category of projective $A[I]$-modules $\mathrm{Proj}_{A[I]}$ with appropriate properties.    

The preceding construction defines a functor
$$
\mathrm{iso}(\mathrm{Proj}_{A[I]})\rightarrow \mathrm{EXT}(A, I).
$$

Next consider a short exact sequence of projective $A[I]$-modules
$$
0\rightarrow E_1'\rightarrow E'\rightarrow E_2'\rightarrow 0
$$
with reduction 
$$
0\rightarrow E_1\rightarrow E\rightarrow E_2\rightarrow 0.
$$
Set
$$
\Sigma := \mathrm{Ker}(E\otimes E^\vee \rightarrow E_2\otimes E_1^\vee ).
$$
Then the two maps
$$
E\otimes E^\vee \rightarrow E_2\otimes E^\vee , \ \ E\otimes E^\vee \rightarrow E\otimes E_1^\vee 
$$
induce a map
$$
\rho :\Sigma \rightarrow E_2\otimes E_2^\vee \oplus E_1\oplus E_1^\vee .
$$
Furthermore, if 
$$
0\rightarrow E\otimes E^\vee \otimes I\rightarrow \mls E\rightarrow A\rightarrow 0
$$
is the extension obtained from $E'$ then the pushout of this extension along the map
$$
E\otimes E^\vee \rightarrow E_2\otimes E_1^\vee 
$$
is canonically trivialized, which implies that $\mls E$ is obtained from an extension
$$
0\rightarrow \Sigma \otimes I\rightarrow \mls E_\Sigma \rightarrow A\rightarrow 0.
$$
Furthermore, this identifies the pushout of $\mls E_\Sigma $ along $\rho $ with the sum of the extensions obtained from $E_1'$ and $E_2'$.  In this way we obtain a predeterminant functor in the sense of \cite[1.2]{Knudsen}.   We leave it to the reader to verify that this in fact defines a determinant.
\end{pg}

\begin{pg}
Combining this with the discussion in \cref{P:3.5} we obtain a map
$$
\bar t:K(A[I])\rightarrow BI
$$
from which one can recover  the trace map
$$
\mathrm{tr}_{I[1]}:\mathrm{DK}(\tau _{\leq 0}\mls RHom (E, E\lotimes I)[1])\rightarrow BI
$$
as the composition 
$$
\xymatrix{
\mathrm{DK}(\tau _{\leq 0}\mls RHom (E, E\lotimes I)[1])\ar[r]^-{\cref{P:2.15}}& \mls D_\perf ^{\strict }(A[I])^\simeq \ar[r]& K(A[I])\ar[r]^-{\bar t}& BI.}
$$

Combining \cref{P:7.2} and \cref{P:3.7} this also defines, by passing to left Kan extensions, a map for every simplicial $A$-module $I_\bullet $ 
$$
\bar t:K(A[I_\bullet ])\rightarrow BI_\bullet 
$$
inducing the trace map $\mathrm{tr}_{I_\bullet [1]}$.
\end{pg}

\begin{pg}\label{P:7.5b}
In the case of a general ringed topos $(\BaseSite, \mls O)$ and an $\mls O$-module $I$ we get by functoriality of the preceding constructions a morphism of presheaves taking values in $\mathrm{Sp}^{\geq 0}$
$$
\bar t:\mls K_{\mls O[I]}\rightarrow BI,
$$
where $\mls K_{\mls O[I]}$ is the presheaf sending $U\in \BaseSite$ to the $K$-theory of strictly perfect complexes of $\mls O(U)[I(U)]$-modules, such that the composition
$$
\xymatrix{
\mathrm{DK}(\tau _{\leq 0}\mls RHom (E, E\lotimes I[1]))\ar[r]& \underline {\mls D}_{\perf, \mls O[I]}^{\strict , \simeq }\ar[r]&\mls K_{\mls O[I]}\ar[r]^-{\bar t}& BI}
$$
is the trace map.   Using the method of \cref{P:7.2} we then also get a map for a simplicial $\mls O$-module $I_\bullet $
$$
\bar t:\mls K_{\mls O[I_\bullet ]}\rightarrow BI_\bullet 
$$
inducing the trace map on $\tau _{\leq 0}\mls RHom (E, E\lotimes N(I_\bullet )[1]).$
\end{pg}

\section{Determinants and traces}\label{S:section6}

In this section we elucidate the relationship between the determinant map and the trace map constructed in \cite[V, 3.7.3]{Illusie}.  

\begin{pg} We begin the discussion in the punctual case.  Let $A$ be a ring and let $I_\bullet $ be a simplicial $A$-module with associated simplicial ring of dual numbers $A[I_\bullet ]$.

Note that $\pi _0(A[I_\bullet ])\simeq A[\pi _0(I_\bullet )]$ and we have a short exact sequence of simplicial monoids
$$
1\rightarrow 1+I_\bullet \rightarrow GL_1(A[I_\bullet ])\rightarrow A^*\rightarrow 1.
$$
In particular, $GL_1(A[I_\bullet ])$ is a simplicial group.  Furthermore, the retraction $r$ defines a splitting of this sequence giving a homomorphism
\begin{equation}\label{E:3.8.1}
GL_1(A[I_\bullet ])\rightarrow I_\bullet .
\end{equation}
Concretely this is given by writing an element $\alpha \in (A[I_n])^*$ as $\bar \alpha (1+x)$, where $\bar \alpha \in A^*$ is the image of $\alpha $ in $A^*$, and then sending $\alpha $ to $x$.
\end{pg}

\begin{pg}
Note that because $-1\in A^*\subset GL_1(A[I_\bullet ])$, the projection map \eqref{E:3.8.1} induces a map
$$
\mls Pic^{\mathbf{Z}}(A[I_\bullet ])\rightarrow BI_\bullet ,
$$
compatible with the symmetric monoidal structure.  Furthermore, the induced map
\begin{equation}\label{E:4.3.1}
\mls Pic ^{\mathbf{Z}}(A[I_\bullet ])\rightarrow \mls Pic ^{\mathbf{Z}}(A)\times BI_\bullet 
\end{equation}
is an equivalence.  The determinant map
$$
\mdet _{A[I_\bullet ]}:\Perf ({A[I_\bullet ]})^\simeq \rightarrow \mls Pic ^{\mathbf{Z}}(A[I_\bullet ])
$$
can therefore be written as 
$$
(\mdet _A, t):\Perf ({A[I_\bullet ]})^\simeq \rightarrow \mls Pic ^{\mathbf{Z}}(A)\times BI_\bullet ,
$$
where the first component is given by the projection
$$
\Perf ({A[I_\bullet ]})^\simeq \rightarrow \Perf (A)^\simeq 
$$
followed by the determinant map for $A$-modules, and $t$ is a symmetric monoidal map
$$
t:\Perf ({A[I_\bullet ]})^\simeq \rightarrow BI_\bullet .
$$
\end{pg}

\begin{pg}
Now consider a ringed site $(\BaseSite, \mls O)$, and a simplicial $\mls O$-module $I_\bullet $.  Write $R\Gamma ^\Delta (I )$ for the simplicial object of the derived $\infty $-category  obtained by taking derived functors of the global section functor.  In terms of the normalization functor from simplicial modules to complexes we have
$$
N(R\Gamma ^\Delta (I_\bullet ))\simeq \tau _{\leq 0}R\Gamma (\BaseSite, N(I_\bullet )),
$$
where the right side denotes the usual derived functor cohomology \cite[I, 3.2.1.11]{Illusie}.

  Proceeding object by object and taking limits we get an equivalence 
$$
\mls Pic _{(\BaseSite, \mls O[I_\bullet ])}^{\mathbf{Z}}\simeq \mls Pic _{(\BaseSite, \mls O)}^{\mathbf{Z}}\times R\Gamma ^\Delta B (I_\bullet) 
$$ 
which gives a description of the determinant map 
$$
\mdet _{(\BaseSite, \mls O[I_\bullet ])} = (\mdet _{(\BaseSite, \mls O)}, t):\Perf {(\BaseSite, \mls O[I_\bullet ])}^\simeq \rightarrow \mls Pic _{(\BaseSite, \mls O)}^{\mathbf{Z}}\times R\Gamma ^\Delta B (I_\bullet ),
$$
where 
$$
t:\Perf {(\BaseSite, \mls O[I_\bullet ])}^\simeq \rightarrow R\Gamma ^\Delta B (I_\bullet )
$$
is a map of symmetric monoidal $\mathbf{E}_\infty $-categories.
\end{pg}

\begin{pg}
Let 
$$
q:\Perf {(\BaseSite, \mls O[I_\bullet ])}\rightarrow \Perf {(\BaseSite, \mls O)}
$$
be the projection.  By \cref{P:2.15} the fiber of $q$ over the point given by an object $E\in \Perf {(\BaseSite, \mls O)}$ is given by
$$
\mathrm{DK}( \tau _{\leq 0}\mathrm{RHom} (E, E\lotimes N(I_\bullet )[1]).
$$
 Restriction $t$ to the fiber of $q$ and using the Dold-Kan correspondence we get a map
\begin{equation}\label{E:4.5.1}
\tau _{\leq 0}\mathrm{RHom} (E, E\lotimes N(I_\bullet ))\rightarrow \tau _{\leq 0}R\Gamma (N(I_\bullet )),
\end{equation}
in the derived category.
\end{pg}

\begin{rem} For $n\geq 0$ write $I_\bullet [n]$ for $\mathrm{DK}(N(I_\bullet )[n])$.  Then 
$$
\tau _{\leq 0}\mathrm{RHom} (E, E\lotimes N(I_\bullet [n]))\simeq (\tau _{\leq n}\mathrm{RHom}(E, E\lotimes N(I_\bullet )))[n],
$$
and therefore by shifting we get a map
$$
\tau _{\leq n} \mathrm{RHom} (E, E\lotimes N(I_\bullet ))\rightarrow R\Gamma (N(I_\bullet ))
$$
for all $n$.  One can show directly that these maps are compatible and therefore by taking colimits define a map
$$
\mathrm{ RHom }(E, E\lotimes N(I_\bullet ))\rightarrow R\Gamma (N(I_\bullet )).
$$
This compatibility follows, however, from \cref{P:4.6.1} below so we do not elaborate further on this point here.
\end{rem}

\begin{prop}\label{P:4.6.1} The map \eqref{E:4.5.1} agrees with the trace map defined in \cite[V, 3.7.3]{Illusie}.
\end{prop}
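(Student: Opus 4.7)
The plan is to identify the map \eqref{E:4.5.1} with the map $\bar t$ constructed in \cref{P:7.5b}, which by construction induces the trace map. Once this identification is made, the proposition follows from \cref{P:7.5b}.

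\textbf{Step 1: Reduction to the Zariski/presheaf picture.} Using the description in \cref{P:3.8}, the determinant map $\mdet_{(\BaseSite,\mls O[I_\bullet])}$ is obtained as $\gamma$ applied to $\mdet^\zar$ on $\uPerf^{\prime,\zar,\simeq}$, and by \cref{L:2.3} is determined by the induced map on $\coprod_n BGL_n(\mls O^\zar[I_\bullet^{\zar,\bullet}])$. The same applies to $\bar t$, which in \cref{P:7.5b} was built as a morphism of presheaves of $\mathbf{E}_\infty$-monoids using the Picard-categorical determinant functor on strictly perfect complexes. Sheafification and passage to global sections are compatible with the splitting \eqref{E:4.3.1}, so it suffices to match $t$ and $\bar t$ as morphisms of presheaves of $\mathbf{E}_\infty$-monoids on $\BaseSite^\zar$.

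\textbf{Step 2: Reduction to discrete coefficients.} The functor $t$ factors through $\mls Pic^{\mathbf{Z}}(\mls O^\zar[I_\bullet^{\zar,\bullet}])$, which by \cref{P:3.7} (and the analogous Kan-extension property for $\mls Pic^{\mathbf{Z}}$) is the left Kan extension of its restriction to smooth $\mathbf{Z}$-algebras equipped with discrete modules; the corresponding reduction for $\bar t$ is built into its construction via \cref{P:7.2}. Therefore it suffices to prove agreement in the punctual case, for a smooth $\mathbf{Z}$-algebra $A$ and a discrete $A$-module $I$.

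\textbf{Step 3: Agreement in the discrete case.} In this setting both $t$ and $\bar t$ arise as the projection onto the $BI$-factor of the same morphism of Picard groupoids
\[
\delta:\mathrm{Perf}^\strict_{A[I]} \longrightarrow \mls Pic^{\mathbf{Z}}(A[I]) \simeq \mls Pic^{\mathbf{Z}}(A)\times BI,
\]
once we identify $\mathrm{EXT}(A,I)\simeq BI$ via \cite[Exp.\ XVIII, 1.4.23]{SGA4} and use the splitting induced by the retraction $A[I]\to A$. The explicit extension
\[
0\to E\otimes E^\vee\otimes I\to \mls E'\to A\to 0
\]
of Section 6, pushed out along the trace, is by a direct unwinding the image of $E'$ under $\delta$: both agree on the universal case $E'=A[I]^{\oplus n}\xrightarrow{\alpha} A[I]^{\oplus n}$ with $\alpha\in 1+M_n(I)$, where Illusie's formula gives $\mathrm{tr}$ of the corresponding element of $M_n(I)$, while the determinant yields $\log\det(1+\alpha)\equiv \mathrm{tr}(\alpha) \pmod{I^2}$, which coincide since $I^2=0$ is automatic in $A[I]$.

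\textbf{Main obstacle.} The main technical difficulty is the bookkeeping in Step 1: one must verify that the splitting used to define $t$ (coming from the retraction $\mls O[I_\bullet]\to\mls O$ and the resulting equivalence \eqref{E:4.3.1}) matches, on the level of presheaves on $\BaseSite^\zar$, the identification $\mathrm{EXT}(A,I)\simeq BI$ used in the definition of $\bar t$, compatibly with the signs introduced by the $\mathbf{Z}$-grading on $\mls Pic^{\mathbf{Z}}$ and the Dold--Kan shifts relating $\tau_{\leq 0}\mls RHom(E,E\lotimes N(I_\bullet)[1])$ to the fiber of $q$ over $E$. Once these identifications are made coherently, the agreement at the level of free modules (Step 3) propagates by the universal property of $K$-theory as group completion and by \cref{L:5.4b}, giving the proposition.
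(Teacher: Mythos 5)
Your proposal follows essentially the same route as the paper's proof: pass to presheaves on $\BaseSite^\zar$, split $\underline{\mls Pic}^{\mathbf{Z},\zar}_{\mls O[I_\bullet]}$ as $\underline{\mls Pic}^{\mathbf{Z},\zar}_{\mls O}\times BI_\bullet^\zar$ via the retraction, and then reduce the comparison of the two candidate maps $t^\zar$ and the trace-induced $\bar t$ to their restrictions along $\coprod_n BGL_n(\mls O^\zar[I^\zar_\bullet])\to BI_\bullet^\zar$, where agreement is the elementary fact that $\det(1+x)=1+\mathrm{tr}(x)$ for $x\in M_n(I)$ with $I^2=0$. Your Steps 2 and 3 make explicit the Kan-extension reduction and the $GL_n$ computation that the paper compresses into ``immediate from the constructions,'' but there is no difference in the underlying argument.
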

\begin{proof}
  The basic idea is to construct a second map
$$
t':K(\BaseSite, {\mls O[I_\bullet ]}) \rightarrow R\Gamma ^\Delta B (I_\bullet) 
$$
 which induces the trace map on the fibers of $q$, and then show that $t= t'$ using the universal property of group completion.

For this  we extend the preceding constructions to presheaves on $\BaseSite^\zar $, defined as in \cref{P:3.8}.

First of all, repeating the construction giving \eqref{E:4.3.1} we get an equivalence
$$
\underline {\mls Pic}^{\mathbf{Z}, \zar }_{\mls O[I _\bullet ]}\rightarrow \mls Pic^{\mathbf{Z}, \zar }_{\mls O}\times BI_\bullet ^\zar ,
$$
and the determinant map on $\underline {\mls D}^{\prime , \zar , \simeq  }_{\mathrm{perf}, \mls O[I_\bullet ]}$ breaks into two parts
$$
(\mdet _{\mls O}^\zar , t^\zar ):\underline {\mls D} _{\mathrm{perf}, \mls O[I_\bullet ]}^{\prime , \zar , \simeq }\rightarrow \mls Pic ^{\mathbf{Z}, \zar }_{\mls O}\times BI_\bullet ^\zar .
$$
Similarly, running through the construction of \cref{P:7.5b} we get a map
$$
t^{\prime , \zar }:\underline {\mls D} _{\mathrm{perf}, \mls O[I_\bullet ]}^{\prime , \zar , \simeq }\rightarrow BI_\bullet ^\zar 
$$
inducing the trace map after applying $\gamma $, sheafifying, and taking global sections.

It therefore suffices to show that $t^\zar $ and $t^{\prime \zar }$ agree.  For this it suffices, in turn, to show that the restrictions along
$$
\coprod _nBGL_n(\mls O^\zar [I^\zar _\bullet ])\rightarrow BI_\bullet ^\zar 
$$
agree. This is immediate from the constructions, and we get \cref{P:4.6.1}. 
\end{proof}

\section{Deformations of complexes}\label{S:section7}

\begin{pg}\label{P:9.1} Let $\BaseSite$ be a site and let $\mls O'\rightarrow \mls O$ be a  surjection of sheaves of rings on $\BaseSite$ with kernel $K$, a square zero ideal.

Let $E\in D(\BaseSite, \mls O)$ be a perfect complex of $\mls O$-modules.  As in \cref{P:2.12} we can then consider the category $\Def (E)$ of deformations of $E$ to $\mls O'$. 

The following is well-known in many cases (e.g. \cite[IV, 3.1.5]{Illusie}, \cite[3.1.1]{deformingkernels}).
\end{pg}

\begin{thm}\label{T:5.2} Let $E$ be a perfect complex of $\mls O$-modules on $\BaseSite$.

(i) There is a class $\omega (E)\in \mathrm{Ext}^2(E, E\lotimes K)$ which vanishes if and only if $E$ lifts to a perfect complex of $\mls O'$-modules.

(ii) If $\omega (E) = 0$ then the set of isomorphism classes of liftings form a torsor under $\mathrm{Ext}^1(E, E\lotimes K).$

(iii) If $\mathrm{Ext}^{-1}(E, E) = 0$  then the set of automorphisms of any lifting is canonically identified with $\mathrm{Ext}^0(E, E\lotimes K)$.
\end{thm}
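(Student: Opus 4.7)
The plan is to reduce all three parts to statements about the $\infty$-category $\text{\rm Def}_\infty(E)$ constructed in \cref{S:section2new}, using \cref{P:2.12} and \cref{P:2.15}.

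Parts (ii) and (iii) follow quickly from the machinery already in place. When $\text{\rm Def}_\infty(E)$ is nonempty, \cref{P:2.15} yields an equivalence
$$
\text{\rm Def}_\infty(E) \;\simeq\; \DK\bigl(\tau_{\leq 0}\mathrm{RHom}(E, E\lotimes K)[1]\bigr),
$$
whose $\pi_0$ is $\mathrm{Ext}^1(E, E\lotimes K)$ acting simply transitively on itself. By \cref{P:2.12} this torsor structure transports to the set of isomorphism classes of $\Def(E)$, proving (ii). For (iii), the derived automorphism space at any basepoint is $\DK(\tau_{\leq 0}\mathrm{RHom}(E, E\lotimes K))$ with $\pi_0 = \mathrm{Ext}^0(E, E\lotimes K)$; the hypothesis $\mathrm{Ext}^{-1}(E, E) = 0$ is what is needed to identify this $\pi_0$ with the honest automorphism group in the classical category $\Def(E)$.

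The substance lies in part (i), where the plan is to implement the construction sketched in the introduction. Choose an embedding $K \hookrightarrow J$ into an injective $\mls O$-module, and let $I := (J \xrightarrow{\mathrm{id}} J)$ be the associated acyclic complex in degrees $-1, 0$. Via Dold--Kan I obtain an inclusion $K \hookrightarrow I_\bullet$ of simplicial $\mls O$-modules with $I_\bullet$ contractible, and form the pushout of animated rings $\tilde{\mls O}' := \mls O' \lotimes_{\mls O[K]} \mls O[I_\bullet]$, a square-zero extension of $\mls O$ by $I_\bullet$. Contractibility of $I_\bullet$ forces the augmentation $\tilde{\mls O}' \to \mls O$ to be a weak equivalence, so $E$ lifts (essentially uniquely) to an object $\tilde E \in \mls D_\perf(\BaseSite, \tilde{\mls O}')$. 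Using the quasi-isomorphism $I/K \simeq K[1]$ one then pushes $\tilde E$ further along the induced map $\tilde{\mls O}' \to \mls O[K[1]_\bullet]$ to produce a deformation of $E$ over the simplicial ring of dual numbers on a delooping of $K$; by the shifted form of \cref{P:2.15} this deformation is classified by a class
$$
\omega(E) \in \mathrm{Ext}^1(E, E\lotimes K[1]) \simeq \mathrm{Ext}^2(E, E\lotimes K).
$$

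To conclude, one must prove $\omega(E) = 0$ if and only if $E$ lifts to $\mls O'$. The plan is to exhibit the square of animated rings
$$
\xymatrix@R=6mm@C=10mm{
\mls O' \ar[r] \ar[d] & \tilde{\mls O}' \ar[d] \\
\mls O \ar[r] & \mls O[K[1]_\bullet]
}
$$
as a pullback, arising from the cofiber sequence $K \to I_\bullet \to I_\bullet/K$. Once this is in hand, deformations of $E$ over $\mls O'$ correspond functorially to pairs consisting of a deformation over $\tilde{\mls O}'$ together with a trivialization of its further pushout to $\mls O[K[1]_\bullet]$; the existence of $\tilde E$ and the vanishing $\omega(E) = 0$ supply such a pair and hence a lift. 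Independence of $\omega(E)$ from the choice of $J$ then follows formally by comparing two embeddings through a common enlargement. I expect the main technical obstacle to be verifying the pullback property in the required sheaf-theoretic generality, together with a careful analysis of the various pushout squares of simplicial and animated rings involved.
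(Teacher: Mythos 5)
Your treatment of parts (ii) and (iii) and your construction of $\omega(E)$ track the paper's argument closely: the identification of $\text{\rm Def}_\infty(E)$ via \cref{P:2.15}, the passage to $\Def(E)$ via \cref{P:2.12}, and the pushout of $\mls O'$ along $K\hookrightarrow I_\bullet$ with $I_\bullet$ the Dold--Kan image of $(J\xrightarrow{\mathrm{id}}J)$, followed by pushforward to $\mls O[\overline{I}_\bullet]$, are all as in the paper (you do gloss over the explicit construction of the $\mathrm{Ext}^1$-action in (ii), which the paper builds in \cref{P:6.5} from the isomorphism $\mls O'\times_{\mls O}\mls O'\simeq \mls O'\times_{\mls O}\mls O[K]$). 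One small notational issue: the formula $\mls O' \lotimes_{\mls O[K]}\mls O[I_\bullet]$ does not typecheck, since there is no natural ring map $\mls O[K]\to\mls O'$; the correct operation is the pushout of the square-zero extension $\mls O'$ along the module inclusion $K\hookrightarrow I_\bullet$, as the paper writes it.

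The substantive divergence, and the gap, is in your proposed proof that $\omega(E)=0$ if and only if $E$ lifts. You want to exhibit the square of animated rings
\begin{equation*}
\xymatrix{
\mls O' \ar[r]\ar[d] & \widetilde{\mls O} \ar[d]\\
\mls O \ar[r] & \mls O[\overline{I}_\bullet]
}
\end{equation*}
as a pullback and then read off the correspondence between deformations over $\mls O'$ and (deformation over $\widetilde{\mls O}$, trivialization over $\mls O[\overline{I}_\bullet]$)-pairs. That the square of \emph{rings} is a pullback is straightforward and the paper notes it. But what you actually need is that the induced square of $\Perf$-categories is Cartesian, and a pullback of animated rings does not automatically induce a pullback of module categories --- this is a genuine descent theorem (the affine case is essentially \cite[\S 16.2]{LurieSAG}), not a formal consequence. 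You do not prove this descent statement. More to the point, in the paper's own logical order the Cartesian-ness of the $\Perf$-square is a \emph{corollary} of the ``$\omega(E)=0$ iff lifts'' result combined with \cref{P:2.15}, so invoking it to prove the ``iff'' is circular as things stand. The paper avoids this entirely by proving the ``iff'' through Gabber's explicit Yoneda-extension description of the obstruction: the identity $o(E)=\omega(E)$ is established by a direct diagram chase with resolutions, and the vanishing criterion is then an elementary argument with short exact sequences. To make your route work you would need an independent proof of the $\infty$-categorical gluing statement for $\Perf$ along square-zero extensions, in the sheaf-theoretic generality of the paper; you flag this as the ``main technical obstacle,'' but it is not a small finishing step --- it is doing roughly the same amount of work as the ``iff'' itself, and the paper's choice of the Gabber argument is precisely designed to sidestep it.
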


\begin{rem} 
If $E'$ is a deformation of $E$ to $\mls O'$, then by applying $R\mathrm{Hom}_{\mls O'}(E', -)$
to the distinguished triangle
$$
K\lotimes _{\mls O}E\rightarrow E'\rightarrow E\rightarrow K\lotimes _{\mls O}E[1]
$$
we get a boundary map
$$
\partial _{E'}:\mathrm{Ext}^{-1}_{\mls O}(E, E)\rightarrow \mathrm{Ext}^0(E, E\lotimes _{\mls O}K).
$$  
If we don't assume that $\mathrm{Ext}^{-1}(E, E) = 0$ then the group of automorphisms of $(E', \sigma )$ is canonically isomorphic to the cokernel of $\partial _{E'}$.  Note that this group may depend on $E'$.
\end{rem} 

The proof of \cref{T:5.2} occupies the remainder of this section.

\begin{pg}
Statements (ii) and (iii) follow from the discussion in \cref{P:2.12}.
Indeed, if $\Def _\infty (E)$ denotes the $\infty $-categorical fiber product of the diagram
$$
\xymatrix{
& \Perf {(\BaseSite, \mls O')}\ar[d]\\
\star \ar[r]^-E& \Perf {(\BaseSite, \mls O)}}
$$
then we constructed in \cref{P:2.12} a functor
$$
[\Def _\infty (E)]\rightarrow \Def (E),
$$
where $[\Def _\infty (E)]$ denotes the underlying $1$-category of $\Def _\infty (E)$, which induces a bijection on isomorphism classes of objects, and if $\mathrm{Ext}^{-1}(E, E) = 0$ is an equivalence of categories.  On the other hand, if there exists a lifting of $E$ then by \cref{P:2.15} we have
\begin{equation}\label{E:6.4.1}
\Def _\infty (E)\simeq \mathrm{DK}(\tau _{\leq 0}\mathrm{RHom}(E, E\lotimes K)[1]).
\end{equation}
This identification depends on the choice of a lifting of $E$ to $\mls O'$, but immediately implies (iii).
\end{pg}

\begin{pg}\label{P:6.5}
To understand the dependence of \eqref{E:6.4.1} on the choice of a lifting we can use classical techniques to get an action (suitably defined) of the right side of \eqref{E:6.4.1} on $\Def _\infty (E)$.

For two surjections $\mls O'_i\rightarrow \mls O$  ($i=1,2$) with square zero kernels $K_i$, the natural functor
\begin{equation}\label{E:6.5.0}
\Def _{\infty , \mls O'\times _{\mls O}\mls O^{\prime \prime }}(E)\rightarrow \Def _{\infty , \mls O'}(E)\times \Def _{\infty ,  \mls O^{\prime \prime }}(E)
\end{equation}
is an equivalence if both sides are nonempty, where in the subscripts we indicate which square-zero surjection to $\mls O$ we are considering.  Now observe that
$$
\mls O'\times _{\mls O}\mls O'\simeq \mls O'[K]\simeq \mls O'\times _{\mls O}\mls O[K].
$$
We therefore get a map
\begin{equation}\label{E:5.5.1}
\Def _{\infty , \mls O'}(E)\times  \Def _{\infty , \mls O[K]}(E)\rightarrow \Def _{\infty , \mls O'}(E).
\end{equation}
The retraction $\mls O\rightarrow \mls O[K]$ induces a canonical lifting of $E$ and therefore using \eqref{E:6.4.1} we have a canonical isomorphism
$$
\Def _{\infty , \mls O[K]}(E)\simeq \mathrm{DK}(\tau _{\leq 0}\mathrm{RHom}(E, E\lotimes K)[1]),
$$
and \eqref{E:5.5.1} can be written as a map
$$
\Def _{\infty , \mls O'}(E)\times \mathrm{DK}(\tau _{\leq 0}\mathrm{RHom}(E, E\lotimes K)[1])\rightarrow \Def _{\infty , \mls O'}(E).
$$
Passing to isomorphism classes we get an action of $\mathrm{Ext}^1(E, E\lotimes K)$ on the set of isomorphism classes in $[\Def _\infty (E)]$, and this action is simply transitive when there exists a lifting in light of the isomorphism \eqref{E:6.4.1}.  This gives (ii).
\end{pg}

\begin{pg}\label{P:5.6}
To define the obstruction we use simplicial techniques.    As noted in the introduction, the work in this article is naturally viewed in the context of formal moduli problems in the sense of \cite{LurieSAG}. The interested reader may wish to consult the introduction to Chapter IV in \cite{LurieSAG} for more on this perspective.

Choose an inclusion 
$$
K\hookrightarrow J
$$
with $J$ an injective $\mls O$-module and let $I_\bullet $ denote the simplicial $\mls O$-module corresponding to the two-term complex 
\begin{equation}\label{E:7.6.1}
\xymatrix{
J\ar[r]^-{\mathrm{id}_J}& J}
\end{equation}
concentrated in degrees $-1$ and $0$.  So we have an inclusion of simplicial $\mls O$-modules $K\hookrightarrow I_\bullet $.  Let $\overline I_\bullet $ denote the cokernel.  The simplicial module $\overline I_\bullet $ is the simplicial module associated to the two term complex
$$
J\rightarrow J/K,
$$
which is quasi-isomorphic to $K[1]$.

Let $\widetilde {\mls O}$ denote the simplicial ring obtained by pushout from the diagram
$$
\xymatrix{
K\ar@{^{(}->}[r]\ar@{^{(}->}[d]& \mls O'\\
I_\bullet .&}
$$
So $\widetilde {\mls O}$ comes equipped with a surjective map to $\mls O$ with kernel $I_\bullet $.
Note also that the further pushout of $\widetilde {\mls O}$ along $I_\bullet \rightarrow \overline {I}_\bullet $ is canonically isomorphic to $\mls O[\overline I_\bullet ]$.
\end{pg}

\begin{rem} Note that here we are using the Dold-Kan correspondence applied to the complex \eqref{E:7.6.1} and then forming the pushout in the category of simplicial rings to obtain $\widetilde {\mls O}$. In characteristic $0$ one could also first consider the pushout in the category of commutative differential graded algebras, but in general it is preferable to work in the category of commutative simplicial rings.
\end{rem}

\begin{lem}\label{L:5.7}
The natural map $\Perf (\BaseSite, {\widetilde {\mls O}})\rightarrow \Perf (\BaseSite, {\mls O})$ is an equivalence.
\end{lem}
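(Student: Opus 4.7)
The plan is to deduce the claim from the observation that $\widetilde{\mls O}\to \mls O$ is already a weak equivalence of sheaves of simplicial rings, after which the statement becomes a standard consequence of base change along weak equivalences.

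First I would verify that the simplicial $\mls O$-module $I_\bullet$ is contractible. By construction its normalization $N(I_\bullet)$ is the two-term complex $J \xrightarrow{\mathrm{id}} J$, which is acyclic, so $\pi_i(I_\bullet) = 0$ for all $i$. Next I would analyze the kernel of the surjection $\widetilde{\mls O}\to \mls O$. Because $K$ is a square-zero ideal in $\mls O'$, the pushout defining $\widetilde{\mls O}$ is computed degreewise as a square-zero extension: as a sheaf of simplicial $\mls O$-modules, $\widetilde{\mls O}$ sits in a short exact sequence
\[
0 \to I_\bullet \to \widetilde{\mls O} \to \mls O \to 0.
\]
Taking the associated long exact sequence of homotopy sheaves and using $\pi_i(I_\bullet) = 0$ for all $i$, we conclude that $\widetilde{\mls O} \to \mls O$ induces an isomorphism on all $\pi_i$, i.e., it is a weak equivalence of sheaves of simplicial rings.

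Having established this, the lemma follows from the general principle that a weak equivalence of (sheaves of) simplicial rings $A_\bullet \to B_\bullet$ induces an equivalence of associated $\infty$-categories of modules (and hence of their subcategories of perfect objects). Concretely, in the model of \cref{S:section2new}, the normalization $N(\widetilde{\mls O})\to N(\mls O)$ is a quasi-isomorphism of sheaves of strictly commutative differential graded algebras, and base change along a quasi-isomorphism of dg-algebras is an equivalence on the corresponding derived $\infty$-categories, restricting to an equivalence on perfect objects by the characterization of the latter as the smallest stable idempotent complete subcategory generated by the unit (sheafified as in \cref{L:5.4b}).

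The only real substantive point is the analysis of the pushout, i.e., seeing that the kernel of $\widetilde{\mls O}\to \mls O$ is precisely $I_\bullet$ as a simplicial $\mls O$-module; this is the step that uses in an essential way that $K$ is square-zero in $\mls O'$ (so that $\mls O'$ splits as $\mls O \oplus K$ as an $\mls O$-module, making the degreewise pushout computation transparent). Once that is in hand the rest is formal.
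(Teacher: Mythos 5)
Your overall plan is correct and matches the paper's: $I_\bullet$ has acyclic normalization, so $\widetilde{\mls O}\to \mls O$ is a weak equivalence of sheaves of simplicial rings, and a weak equivalence of simplicial rings induces a Quillen equivalence on module categories (this is precisely the reference \cite[4.4]{SS} invoked in the paper), which then restricts to an equivalence on perfect subcategories. The paper's proof is exactly this, additionally pointing to the fiber description of \cref{P:2.15} to upgrade the statement to one about the $\infty$-categories $\mls D_\perf$.

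However, your parenthetical justification of the pushout step contains a genuine error worth correcting. You assert that the square-zero condition on $K$ makes $\mls O'$ split as $\mls O\oplus K$ as an $\mls O$-module. That is false — and does not quite parse, since $\mls O'$ carries no natural $\mls O$-module (or $\mls O$-algebra) structure unless the surjection $\mls O'\to\mls O$ admits a ring-theoretic section, which is exactly the nontrivial condition whose obstruction the entire theory is built to measure. (Already $\mathbf{Z}/p^2\to\mathbf{Z}/p$ with kernel $p\mathbf{Z}/p^2$ refutes it.) What the square-zero hypothesis does give is that $K$ is an $\mls O$-module, which is what makes the $\mls O$-linear map $K\to I_\bullet$, and hence the pushout, meaningful in the first place. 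The correct reason that the kernel of $\widetilde{\mls O}\to\mls O$ is $I_\bullet$ is the general Yoneda/Baer pushout formula for square-zero extensions: degreewise, $\widetilde{\mls O}\cong(\mls O'\oplus I_\bullet)/\{(k,-\rho(k)):k\in K\}$, whose kernel over $\mls O$ is visibly $I_\bullet$, with no splitting assumption needed or available. Since this fact is also asserted by the paper where $\widetilde{\mls O}$ is constructed, the error does not sink the proof, but the parenthetical should be deleted or replaced.
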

\begin{proof}
This follows from \cite[4.4]{SS}, which implies that the functor induces an equivalence of homotopy categories, and the description of the fibers given in \cref{P:2.15}.
\end{proof}

\begin{pg}
Given a perfect complex $E$ of $\mls O$-modules, we therefore get a perfect complex $\widetilde E$ of $\widetilde {\mls O}$-modules.  Pushing out this complex $\widetilde E$ along the natural map
$$
\widetilde {\mls O}\rightarrow \mls O[\overline I_\bullet ]
$$
we get a class in 
$$
\mathrm{Ext}^1_{\mls O}(E, E\lotimes \overline I_\bullet )\simeq \mathrm{Ext}^1_{\mls O}(E, E\lotimes K[1])\simeq \mathrm{Ext}^2_{\mls O}(E, E\lotimes K).
$$
We define  
$$
\omega (E)\in \mathrm{Ext}^2_{\mls O}(E, E\lotimes K)
$$
to be this class.  We will see in \cref{SS:9.15} below that the class $\omega (E)$ vanishes if and only if $E$ lifts to $\mls O'$.
\end{pg}

\subsection{Gabber's construction of the obstruction}\label{S:section9}

Gabber has given a construction of an obstruction to deforming a perfect complex which is more direct \cite{Gabber}.  The equivalence with the definition of the class $\omega (E)$ can be see as follows.

 Let $\mls O'\rightarrow \mls O$ and $E$ be as in \cref{P:9.1}.
Gabber defines a class
$$
o(E)\in \mathrm{Ext}^2_{\mls O}(E, E\otimes _{\mls O}^{\mathbf{L}}K),
$$
which vanishes if and only if there exists a deformation of $E$ to $\mls O'$. In fact, Gabber's construction is more general and can also be considered for non-perfect complexes.

\begin{pg}\label{P:7.5} The class $o(E)$ is constructed as follows.  Assume that $E$ is represented by a bounded above complex of flat modules.  We will work directly with complexes.  

Choose a bounded above complex $G$ of flat $\mls O'$-modules which is acyclic and a surjective map
$$
G\rightarrow E.
$$

Let $S$ be the kernel so we have
$$
0\rightarrow S\rightarrow G\rightarrow E\rightarrow 0.
$$
From the snake lemma applied to the diagram
$$
\xymatrix{
& S\otimes K\ar[d]\ar[r]& G\otimes K\ar[r]\ar[d]& E\otimes K\ar[d]^0\ar[r]& 0\\
0\ar[r]& S\ar[r]& G\ar[r]& E\ar[r]& 0}
$$
we get an exact sequence of complexes
\begin{equation}\label{E:7.5.1}
0\rightarrow E\otimes _{\mls O}K\rightarrow S\otimes _{\mls O'}\mls O\rightarrow G\otimes _{\mls O'}\mls O\rightarrow E\rightarrow 0.
\end{equation}
Let $o(E)\in \text{Ext}^2_{\mls O}(E, E\otimes _{\mls O}K)$ denote the class of this Yoneda extension.  A straightforward exercise shows that this class is independent of the choice of $G\rightarrow E$.
\end{pg}

\begin{prop} $o(E) = \omega (E)$.
\end{prop}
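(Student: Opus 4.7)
The plan is to use Gabber's flat resolution $G \to E$ to exhibit an explicit lift of $E$ to $\widetilde{\mls O}$, and then identify the resulting $\omega(E)$ with the Yoneda class $o(E)$ by tracing through the pushforward to $\mls O[\bar I_\bullet]$.

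First, I would set $\widetilde{E} := G \otimes_{\mls O'} \widetilde{\mls O}$, formed via the structural map $\mls O' \to \widetilde{\mls O}$ into the simplicial ring pushout. Flatness of $G$ over $\mls O'$ yields $\widetilde{E} \lotimes_{\widetilde{\mls O}} \mls O \simeq G \otimes_{\mls O'} \mls O \simeq E$, so $\widetilde{E}$ represents a lift of $E$ to $\widetilde{\mls O}$; by \cref{L:5.7} it agrees with the canonical lift up to contractible choice, so its pushforward along $\widetilde{\mls O} \to \mls O[\bar I_\bullet]$ extracts $\omega(E)$.

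Next, the pushforward $\widetilde{E} \lotimes_{\widetilde{\mls O}} \mls O[\bar I_\bullet] \simeq G \otimes_{\mls O'} \mls O[\bar I_\bullet]$ fits into a distinguished triangle
$$
G \otimes_{\mls O'} \bar I_\bullet \to G \otimes_{\mls O'} \mls O[\bar I_\bullet] \to G \otimes_{\mls O'} \mls O \to G \otimes_{\mls O'} \bar I_\bullet [1]
$$
arising from the split short exact sequence $0 \to \bar I_\bullet \to \mls O[\bar I_\bullet] \to \mls O \to 0$ tensored over $\mls O'$ with $G$. The connecting morphism $E \simeq G \otimes_{\mls O'} \mls O \to G \otimes_{\mls O'} \bar I_\bullet[1] \simeq E \otimes_{\mls O} \bar I_\bullet[1]$, combined with the Dold-Kan identification $\bar I_\bullet \simeq K[1]$, produces $\omega(E) \in \mathrm{Ext}^2_{\mls O}(E, E \lotimes K)$ by construction.

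Finally, to match $\omega(E)$ with $o(E)$, I would decompose this degree-$2$ boundary using the short exact sequence of simplicial $\mls O$-modules $0 \to K \to I_\bullet \to \bar I_\bullet \to 0$ into a Yoneda composition of two degree-$1$ boundaries. Applied to $G \otimes_{\mls O'} \mls O[\bar I_\bullet]$, the outer boundary (from the quotient $I_\bullet \to \bar I_\bullet$) is represented by a short exact sequence whose middle term involves $S \otimes_{\mls O'} \mls O$, with $S = \ker(G \to E)$ entering exactly as in the snake lemma, while the inner boundary (from $K \hookrightarrow I_\bullet$) is represented by $0 \to E \lotimes K \to S \otimes_{\mls O'} \mls O \to C \to 0$ with $C = \mathrm{im}(S \otimes_{\mls O'} \mls O \to G \otimes_{\mls O'} \mls O)$. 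Their Yoneda splice is Gabber's 4-term sequence
$$
0 \to E \lotimes K \to S \otimes_{\mls O'} \mls O \to G \otimes_{\mls O'} \mls O \to E \to 0,
$$
which is the definition of $o(E)$. The main obstacle is this last cocycle-level identification, matching an $\infty$-categorical connecting morphism with a classical Yoneda 2-extension; I would handle it by choosing a cofibrant replacement of $\widetilde{\mls O}$, realizing $\widetilde{E}$ as an explicit dg-module, and computing the boundary on the total complex of the bicomplex formed from $G$ and the two-term complex $(J \xrightarrow{\mathrm{id}} J)$ underlying $I_\bullet$, in which the snake lemma reappears naturally as the totalization.
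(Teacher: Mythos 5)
Your plan falls apart at the very first step. You set $\widetilde{E} := G \otimes_{\mls O'} \widetilde{\mls O}$ and claim that $\widetilde{E} \lotimes_{\widetilde{\mls O}} \mls O \simeq G \otimes_{\mls O'} \mls O \simeq E$. But in Gabber's construction, as set up in \cref{P:7.5}, $G$ is an \emph{acyclic} bounded above complex of flat $\mls O'$-modules equipped with a surjection $G \to E$; it is \emph{not} a flat resolution of $E$. Since $G$ is bounded above with flat terms and acyclic, it is K-flat, so $G \otimes_{\mls O'} \mls O$ remains acyclic. In particular $G \otimes_{\mls O'} \mls O \not\simeq E$, your $\widetilde{E}$ is not a deformation of $E$ to $\widetilde{\mls O}$, and the class you then extract from the pushforward along $\widetilde{\mls O} \to \mls O[\overline{I}_\bullet]$ has nothing to do with $\omega(E)$. (Had $G$ been a flat resolution of $E$, the $4$-term sequence defining $o(E)$ would always split and the obstruction would be identically zero, so that reading of the construction cannot be right either.)

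The paper runs the comparison in the opposite order, and the order is essential. It first produces the lift $\widetilde{E}$ abstractly using \cref{L:5.7}, which gives the equivalence $\Perf(\BaseSite, \widetilde{\mls O}) \simeq \Perf(\BaseSite, \mls O)$, regards $\widetilde{E}$ as a complex of $N(\widetilde{\mls O})$-modules, and only \emph{then} chooses a surjection $G \to \widetilde{E}$ from an acyclic bounded above complex of flat $\mls O'$-modules. The composite $G \to \widetilde{E} \to E$ recovers Gabber's surjection with kernel $S$, together with a vertical map between the short exact sequences $0 \to S \to G \to E \to 0$ and $0 \to E \otimes N(I_\bullet) \to \widetilde{E} \to E \to 0$. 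Tensoring down to $\mls O$ gives a comparison diagram that identifies Gabber's $4$-term Yoneda extension for $o(E)$ with the image, under the boundary map for $0 \to K \to I_\bullet \to \overline{I}_\bullet \to 0$, of the $\mathrm{Ext}^1$-class of $\widetilde{E} \otimes_{N(\widetilde{\mls O})} N(\mls O[\overline{I}_\bullet])$; but the latter is exactly $\omega(E)$ by definition. Your closing ``Yoneda splice'' instinct aims at the same diagram, but you cannot reach it by manufacturing $\widetilde{E}$ out of $G$.
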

\begin{proof}
We proceed with notation as in \cref{P:5.6}.  Let $\widetilde E\rightarrow E$ be the complex of $\widetilde {\mls O}$-modules over $E$ provided by the equivalence in \cref{L:5.7}.   Abusing notation, we consider this as a complex of $N(\widetilde {\mls  O})$-modules. Choose $G\rightarrow \widetilde E$ a surjection of complexes of $\mls O'$-modules with $G$ an acyclic bounded above complex of flat $\mls O'$-modules.  We then get a commutative diagram
$$
\xymatrix{
0\ar[r]& S\ar[d]\ar[r]& G\ar[d]\ar[r]& E\ar@{=}[d]\ar[r]& 0\\
0\ar[r]& E\otimes _{\mls O}N(I_\bullet )\ar[r]& \widetilde E\ar[r]& E\ar[r] & 0.}
$$

Tensoring with $\mls O$ as above we get a commutative diagram
$$
\xymatrix{
0\ar[r]& E\otimes _{\mls O}K\ar@{^{(}->}[rd]\ar[r]& S\otimes _{\mls O'}\mls O\ar[d]\ar[r]& G\otimes _{\mls O'}\mls O\ar[dd]\ar[r]& E\ar@{=}[dd]\ar[r]& 0\\
&& E\otimes N(I_\bullet )\ar[d]&&&\\
&0\ar[r]& E\otimes N(\overline I_\bullet)\ar[r]& \widetilde E\otimes _{N(\widetilde {\mls  O})}N(\mls O[\overline I_\bullet ]))\ar[r]& E\ar[r]& 0.}
$$
From this it follows that $o(E)$ is the image under the boundary map
$$
\mathrm{Ext}^1_{\mls O}(E, E\lotimes N(\overline I_\bullet ))\rightarrow \mathrm{Ext}^2_{\mls O}(E, E\lotimes K)
$$
of the class in $\mathrm{Ext}^1_{\mls O}(E, E\lotimes N(\overline I_\bullet ))$ given by the extension
$$
0\rightarrow E\otimes _{\mls O}N(\overline I_\bullet )\rightarrow \widetilde E\otimes _{N(\widetilde {\mls O})}N(\mls O[\overline I_\bullet ])\rightarrow E\rightarrow 0,
$$
which in turn implies that $o(E) = \omega (E)$.
\end{proof}

\subsection{$\omega (E) = 0$ if and only if $E$ lifts}\label{SS:9.15}

\begin{prop} The class $\omega (E)$ is $0$ if and only if $E$ lifts to $\mls O'$.
\end{prop}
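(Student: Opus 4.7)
The plan is to realize $\mls O'$ as a Milnor-type pullback of animated rings and then invoke descent for perfect complexes under such pullbacks.

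First I would verify that the square
$$
\xymatrix{
\mls O' \ar[r] \ar[d] & \mls O \ar[d]^-{s} \\
\widetilde{\mls O} \ar[r]^-{p} & \mls O[\overline I_\bullet]
}
$$
is a pullback in (sheaves of) animated rings, where $p$ is the pushout map of \cref{P:5.6} and $s$ is the canonical section of the augmentation $\mls O[\overline I_\bullet] \to \mls O$.  The square commutes because $K \hookrightarrow I_\bullet \twoheadrightarrow \overline I_\bullet$ is the zero map, which forces the composite $\mls O' \to \widetilde{\mls O} \xrightarrow{p} \mls O[\overline I_\bullet]$ to factor through $\mls O' \twoheadrightarrow \mls O \xrightarrow{s} \mls O[\overline I_\bullet]$.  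A short homotopy-group computation then shows that the induced map $\mls O' \to \widetilde{\mls O} \times_{\mls O[\overline I_\bullet]} \mls O$ is an equivalence of animated rings: the fiber of $p$ is the fiber of $I_\bullet \to \overline I_\bullet$, namely $K$, which matches the fiber of $\mls O' \to \mls O$, while the higher homotopy of the pullback vanishes.

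Second, I would invoke the Milnor/Mayer--Vietoris descent property for perfect complexes along this pullback square (using that $p$ is surjective on $\pi _0$):
$$
\mls D_\perf(\BaseSite, \mls O') \simeq \mls D_\perf(\BaseSite, \widetilde{\mls O}) \times_{\mls D_\perf(\BaseSite, \mls O[\overline I_\bullet])} \mls D_\perf(\BaseSite, \mls O).
$$
In the punctual case this is a standard consequence of the theory of animated rings and their module $\infty$-categories developed in \cite{LurieSAG, LurieHA}, and the sheaf-theoretic version follows by passing to global sections.

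Given this descent, the two implications follow by base change.  For the forward direction, assume $E$ admits a lift $E' \in \mls D_\perf(\BaseSite, \mls O')$: base change along $\mls O' \to \widetilde{\mls O}$ gives a lift of $E$ to $\widetilde{\mls O}$, which by \cref{L:5.7} must be equivalent to $\widetilde E$, and the factorization from the first paragraph yields
$$
\widetilde E \lotimes _{\widetilde{\mls O}} \mls O[\overline I_\bullet] \simeq E' \lotimes _{\mls O'} \mls O[\overline I_\bullet] \simeq E \lotimes _{\mls O} \mls O[\overline I_\bullet],
$$
so $\omega(E) = 0$.  Conversely, if $\omega(E) = 0$ then there is an equivalence $\widetilde E \lotimes _{\widetilde{\mls O}} \mls O[\overline I_\bullet] \simeq E \lotimes _{\mls O} \mls O[\overline I_\bullet]$; packaged with $\widetilde E$ and $E$ this determines an object of the $\infty$-pullback, yielding by the descent of the second paragraph a perfect complex $E' \in \mls D_\perf(\BaseSite, \mls O')$ that restricts to $E$ on $\mls O$.

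The main obstacle is the second step: pinning down a clean reference or proof of Milnor-type descent for perfect complexes along a pullback square of animated rings coming from a square-zero extension, particularly in the sheaf-theoretic setting rather than just the affine/punctual case.  Once this descent is in hand, both implications reduce to routine base-change manipulations.
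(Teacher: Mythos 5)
Your proposal takes a genuinely different route from the paper's proof, and it is sound in outline, but it reverses the paper's logical order in a way that hides the real work.

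The paper proves the proposition directly and explicitly, via Gabber's description of the obstruction class: one chooses a surjection $G \to E$ from an acyclic bounded-above complex of flat $\mls O'$-modules, forms the four-term sequence \eqref{E:7.5.1}, and shows that its Yoneda class in $\mathrm{Ext}^2_{\mls O}(E, E\lotimes K)$ is zero precisely when a splitting $S\otimes_{\mls O'}\mls O \to E\otimes_{\mls O}K$ exists, from which a lift $E'$ is constructed by pushout. The cartesian square you start from --- which appears in the paper immediately \emph{after} the proposition as the square \eqref{E:square} --- is there deduced \emph{from} the proposition (its proof cites ``the preceding discussion'' together with \cref{P:2.15}). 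You instead propose to establish the cartesian square first, by invoking Milnor-type patching for perfect complexes along a pullback of animated rings in which one leg is surjective on $\pi_0$, and then read off the lifting criterion by base change. This is a legitimate and more conceptual alternative, in the spirit of the ``infinitesimal cohesiveness'' of the moduli of perfect complexes discussed in \cite[Ch.~16--19]{LurieSAG}; in fact the paper's Remark after the lemma notes that in the punctual case the result is contained in \cite[16.2]{LurieSAG}. But the gap you flag yourself is the genuine one: the Milnor descent statement is the entire content of the proposition in disguise, and in the sheaf-theoretic setting it is not available as a clean citation --- you would need to prove it, for instance by checking the equivalence on mapping spaces via \cref{P:2.15} (fully faithfulness) and then separately arguing essential surjectivity, which at the level of isomorphism classes is exactly the assertion that $\omega(E)=0$ implies $E$ lifts. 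The paper's Gabber-style proof sidesteps this circularity by producing the lift by hand from the vanishing of an explicit Yoneda class. So your plan is correct in spirit, but as written it defers the crux to a descent theorem that must itself be proved; the paper's more elementary argument is what fills that hole.
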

\begin{proof}
We show this using Gabber's description of $\omega (E)$.

If $E$ lifts to a complex $E'$ over $\mls O'$ then as above we can choose $G\rightarrow E$ that factors through a map $G\rightarrow E'$.    Examining the commutative diagram
$$
\xymatrix{
0\ar[r]& S\ar[d]\ar[r]& G\ar[d]\ar[r]& E\ar[r]\ar@{=}[d]& 0\\
0\ar[r]& E\otimes _{\mls O}K\ar[r]& E'\ar[r]& E\ar[r]& 0.}
$$
The map $S\rightarrow E\otimes _{\mls O}K$ induces a retraction of the inclusion $E\otimes _{\mls O}K\hookrightarrow S\otimes _{\mls O'}\mls O$ and therefore the corresponding Yoneda class $\omega (E)$ is zero.

Conversely, suppose $\omega (E) = 0$.  Fix $G\rightarrow E$ as above, and let $T$ denote the image of the map
$$
S\otimes _{\mls O'}\mls O\rightarrow G\otimes _{\mls O'}\mls O.
$$
We then have short exact sequences
$$
0\rightarrow E\otimes _{\mls O}K\rightarrow S\otimes _{\mls O'}\mls O\rightarrow T\rightarrow 0
$$
and
$$
0\rightarrow T\rightarrow G\otimes _{\mls O'}\mls O\rightarrow E\rightarrow 0.
$$
The first of these defines a class
$$
\alpha \in \text{Ext}^1_{\mls O}(T, E\otimes _{\mls O}K)
$$
whose image under the boundary map
$$
\text{Ext}^1_{\mls O}(T, E\otimes _{\mls O}K)\rightarrow \text{Ext}^2_{\mls O}(E, E\otimes _{\mls O}K)
$$
defined by the second sequence is the class $\omega (E)$.  Since $G$ is acyclic this boundary map is injective (since $\text{Ext}^1_{\mls O}(G\otimes _{\mls O'}\mls O, E\otimes _{\mls O}K)$ surjects onto the kernel), so the class $\alpha $ is $0$.  Thus there exists a morphism $r:S\otimes _{\mls O'}\mls O\rightarrow E\otimes _{\mls O}K$ in the derived category splitting the inclusion $E\otimes _{\mls O}K\hookrightarrow S\otimes _{\mls O'}\mls O$.  Forming the pushout of the diagram
$$
\xymatrix{
0\ar[r]& S\ar[d]^-r\ar[r]& G\ar[r]& E\ar[r]& 0\\
& E\otimes _{\mls O}K&&}
$$
we obtain $E'$ lifting $E$.
\end{proof}

\begin{pg}
In fact a bit more is true.
Note that the natural map of simplicial rings
$$
\mls O'\rightarrow \mls O\times _{\mls O[\overline I_\bullet ]}\widetilde {\mls O}
$$
is an isomorphism, so there is a commutative square
\begin{equation}\label{E:square}
\xymatrix{
\Perf (\BaseSite, {\mls O'})\ar[d]\ar[r]&\Perf (\BaseSite, {\widetilde {\mls O}})\ar[d]\\
\Perf (\BaseSite, {\mls O})\ar[r]& \Perf (\BaseSite, {\mls O[\overline I_\bullet ]})}
\end{equation}
\end{pg}

\begin{lem} The  square \eqref{E:square}
is homotopy cartesian.
\end{lem}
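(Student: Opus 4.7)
The plan is to show that the natural map
$$
F\colon \Perf (\BaseSite, \mls O') \to \mathcal{P} := \Perf (\BaseSite, \mls O) \times _{\Perf (\BaseSite, \mls O[\overline I_\bullet ])} \Perf (\BaseSite, \widetilde{\mls O})
$$
is an equivalence of $\infty $-categories, which I would verify by checking that $F$ induces an equivalence on homotopy fibers over each object $E \in \Perf (\BaseSite, \mls O)$. Since the vertical projections on both sides target $\Perf (\BaseSite, \mls O)$, this will imply that the square is homotopy cartesian.

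The fiber of $\Perf (\BaseSite, \mls O') \to \Perf (\BaseSite, \mls O)$ over $E$ is, by construction, $\Def _\infty (E)$, which by \cref{P:2.15} is empty when $\omega (E)\ne 0$ and otherwise equivalent to $\DK (\tau _{\leq 0}\mathrm{RHom}_{\mls O}(E, E\lotimes K)[1])$. For the fiber of $\mathcal{P}$ over $E$, I would use \cref{L:5.7}: because $\Perf (\BaseSite, \widetilde{\mls O}) \to \Perf (\BaseSite, \mls O)$ is an equivalence, this fiber coincides with the fiber of $\Perf (\BaseSite, \widetilde{\mls O}) \to \Perf (\BaseSite, \mls O[\overline I_\bullet ])$ over $E\lotimes _{\mls O}\mls O[\overline I_\bullet ]$. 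Applying \cref{P:2.15} a second time to the square-zero surjection $\widetilde{\mls O} \to \mls O[\overline I_\bullet ]$ with kernel $K$, together with the adjunction
$$
\mathrm{RHom}_{\mls O[\overline I_\bullet ]}(E|_{\mls O[\overline I_\bullet ]}, E|_{\mls O[\overline I_\bullet ]}\lotimes K) \simeq \mathrm{RHom}_{\mls O}(E, E\lotimes K),
$$
identifies this second fiber (when nonempty) with the same space $\DK (\tau _{\leq 0}\mathrm{RHom}_{\mls O}(E, E\lotimes K)[1])$.

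The crucial compatibility is that both fibers are simultaneously empty or nonempty. The obstruction for nonemptiness on the pullback side is the class in $\mathrm{Ext}^1_{\mls O}(E, E\lotimes \overline I_\bullet ) \simeq \mathrm{Ext}^2_{\mls O}(E, E\lotimes K)$ of the difference between the two lifts of $E$ to $\mls O[\overline I_\bullet ]$: namely $\widetilde E \otimes _{\widetilde{\mls O}}\mls O[\overline I_\bullet ]$ (provided by \cref{L:5.7}) and the trivial section lift $E\otimes _{\mls O}\mls O[\overline I_\bullet ]$. By the very definition of $\omega (E)$ given in \cref{P:5.6}, this class is $\omega (E)$, so the two obstructions coincide, and both fibers are torsors under $\mathrm{Ext}^1_{\mls O}(E, E\lotimes K)$ when nonempty.

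Finally, I would verify that $F$ intertwines the two torsor structures under the identifications above. Unwinding the definitions, a deformation $E'\in \Def _\infty (E)$ corresponds, via the pullback presentation $\mls O'\simeq \mls O\times _{\mls O[\overline I_\bullet ]}\widetilde{\mls O}$, to the pair $(E, E'\otimes _{\mls O'}\widetilde{\mls O})$ together with the tautological identification of their further base changes to $\mls O[\overline I_\bullet ]$, and this assignment should be compatible with the $\DK$ descriptions on both sides. I expect this last compatibility to be the principal technical obstacle: it requires carefully tracking natural transformations between deformation-theoretic functors in the $\infty $-categorical setting, though once the correct framework is in place it reduces to a diagram chase.
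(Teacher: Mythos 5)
Your proposal is correct and is essentially the same argument that the paper gives, only spelled out in more detail: the paper's proof consists of a single sentence asserting that, by \cref{P:2.15} and the preceding discussion, the map on fibers over $\Perf(\BaseSite, \mls O)$ is an equivalence, and you are simply filling in what that means (identifying both fibers with $\DK(\tau_{\leq 0}\mathrm{RHom}_{\mls O}(E, E\lotimes K)[1])$ via the adjunction, matching up the nonemptiness obstructions with $\omega(E)$, and noting that $F$ must respect these identifications). Two minor points: the identification of the fiber of $\mathcal{P}\to\Perf(\BaseSite,\mls O)$ over $E$ with the fiber of $\Perf(\BaseSite,\widetilde{\mls O})\to\Perf(\BaseSite,\mls O[\overline I_\bullet])$ over $E|_{\mls O[\overline I_\bullet]}$ is a formal property of homotopy pullbacks and does not actually require \cref{L:5.7} (you invoke that lemma slightly out of place); and the final compatibility you flag as the ``principal technical obstacle'' is indeed left implicit in the paper as well, since it follows from tracing the natural identifications of \cref{P:2.15} through base change along the square of rings, where the induced map on the kernels $K$ is the identity.
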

\begin{proof}
This follows from the preceding discussion  combined with \cref{P:2.15}, which implies that the map on fibers is an equivalence.  
\end{proof}

\begin{rem} In the case of the punctual topos, the above results are special cases of the  results in \cite[16.2]{LurieSAG}.
\end{rem}

\begin{rem}
We have formulated \cref{T:5.2} in classical terms using the category $\Def (E)$.  This forces, in particular, the statement of \cref{T:5.2} (iii) to incorporate the assumption of vanishing $\mathrm{Ext}^{-1}$.  The proof, however, shows, that if one considers instead the category $[\Def _\infty (E)]$, then statements (i)-(iii) all hold and no assumption of vanishing $\mathrm{Ext}^{-1}$ is needed in (iii).  Furthermore, if the obstruction is zero then the choice of a lifting of $E$ identifies $[\Def _\infty (E)]$ with the Picard category associated to the two-term complex
$$
\tau _{\geq -1}\tau _{\leq 0}(\mathrm{RHom}(E, E\lotimes K)[1]).
$$
\end{rem}

\section{Proof of \cref{T:maintheorem}}\label{S:section8}

We continue with the notation of the preceding section.

\begin{pg}
For statement (i) of \cref{T:maintheorem} note that there is a commutative diagram
\begin{equation}\label{E:7.1.1}
\xymatrix{
\Perf (\BaseSite, {\mls O})^\simeq \ar[d]^-{\mdet }& \Perf (\BaseSite, {\widetilde {\mls O}})^\simeq \ar[l]_-{\simeq }\ar[d]^-\mdet \ar[r]& \Perf (\BaseSite, {\mls O[\overline I_\bullet ]})^\simeq \ar[d]^-{\mdet }& \mathrm{DK}\tau _{\leq 0}\mathrm{RHom}(E, E\lotimes K[2])\ar[d]^-{\mathrm{tr}}\ar[l]\\
\mls Pic ^{\mathbf{Z}}_{\mls O}& \mls Pic^{\mathbf{Z}}_{\widetilde {\mls O}}\ar[l]_-{\simeq }\ar[r]& \mls Pic^{\mathbf{Z}}_{\mls O[\overline I_\bullet ]}& R\Gamma ^\Delta (BK[1])\ar[l].}
\end{equation}
By definition the obstruction class $\omega (E)$ is obtained as follows: Let $\widetilde E\in [\mls D_\perf (\BaseSite, \widetilde {\mls O})]$ be the corresponding object.  Then the pushout of $\widetilde E$ to $\mls O[\overline I_\bullet ]$ is a deformation of $E$ to $\mls O[\overline I_\bullet ]$ and therefore defines an isomorphism class in the fiber
$$
\mathrm{DK}(\tau _{\leq 0}\mathrm{RHom}(E, E\lotimes K[2])).
$$
The class $\omega (E)\in \mathrm{Ext}^2(E, E\lotimes K)$ is the class of this isomorphism class.

The obstruction class $\omega (\det (E))$ is obtained similarly from the bottom row of \eqref{E:7.1.1}.  Statement (i) in \cref{T:maintheorem} therefore follows from the commutativity of \eqref{E:7.1.1}.
\end{pg}

\begin{rem}\label{R:7.1.half}
Note that the construction of the obstruction class $\omega (\mls L)$ for an invertible $\mls O$-modules is additive in $\mls L$ in the sense that for two invertible $\mls O$-modules $\mls L$ and $\mls M$ we have
$$
\omega (\mls L\otimes \mls M) = \omega (\mls L)+ \omega (\mls M).
$$
\end{rem}

\begin{pg} Next we turn to statements (ii) and (iii) in \cref{T:maintheorem}.  Write $\mls Pic ^{\mathbf{Z}}_{\det (E), \mls O'}$ for the fiber product of the diagram
$$
\xymatrix{
& \mls Pic ^{\mathbf{Z}}_{\mls O'}\ar[d]\\
\star \ar[r]^-{\det (E)}& \mls Pic ^{\mathbf{Z}}_{\mls O},}
$$
and similarly for $\mls O[K]$ and $\mls O'\times _{\mls O}\mls O'$.  By \cref{P:4.6.1} the following diagram commutes
$$
\xymatrix{
\mls F_{E, \mls O'}\times \mls F_{E, \mls O'}\ar[d]^-{\mathrm{det} \times \mathrm{det}}& \mls F_{E, \mls O'\times _{\mls O}\mls O'}\ar[l]_-{\eqref{E:6.5.0}}\ar[d]^-{\mathrm{det} }\ar[r]^-{\simeq }& \mls F_{E, \mls O'}\times \mls F_{E, \mls O[K]}\ar[d]^-{\mathrm{det} \times \mathrm{det} }\\
\mls Pic ^{\mathbf{Z}}_{\mathrm{det} (E), \mls O'}\times \mls Pic ^{\mathbf{Z}}_{\det (E), \mls O'}& \mls Pic ^{\mathbf{Z}}_{\det (E), \mls O'\times _{\mls O}\mls O'}\ar[l]\ar[r]^-{\simeq }& \mls Pic ^{\mathbf{Z}}_{\det (E), \mls O'}\times \mls Pic ^{\mathbf{Z}}_{\det (E), \mls O[K]},}
$$
$$
\xymatrix{
\mls F_{E, \mls O'}\times \mls F_{E, \mls O[K]}\ar[d]^-{\mathrm{det} \times \mathrm{det} }\ar[r]^-{\simeq }& \mls F_{E, \mls O'}\times \mathrm{DK}\tau _{\leq 0}\mathrm{RHom}(E, E\lotimes K[1])\ar[d]^-{\mathrm{det}\times \mathrm{tr}}\\
\mls Pic ^{\mathbf{Z}}_{\det (E), \mls O'}\times \mls Pic ^{\mathbf{Z}}_{\det (E), \mls O[K]}\ar[r]^-{\simeq }& \mls Pic ^{\mathbf{Z}}_{\det (E), \mls O'}\times R\Gamma ^\Delta (BK),}
$$
$$
\xymatrix{
\mls F_{E, \mls O'\times _{\mls O}\mls O'}\ar[r]\ar[d]_-{\mathrm{det}}& \mls F_{E, \mls O'}\ar[d]_-{\mathrm{det}}\\
\mls Pic ^{\mathbf{Z}}_{\det (E), \mls O'\times _{\mls O}\mls O'}\ar[r]& \mls Pic ^{\mathbf{Z}}_{\det (E), \mls O'}.}
$$
From this and the construction of the action in \cref{P:6.5} we get statements (ii) and (iii) in \cref{T:maintheorem}.
\end{pg}

\section{An alternate proof of \cref{T:maintheorem} (i) in the case of global resolutions}\label{S:section10}

For basic facts about the  filtered derived category see  \cite[Chapter V]{Illusie}.

As in the previous section we work with a site  $\BaseSite$ and  consider a surjective map of sheaves of rings  $\mls O'\rightarrow \mls O$ with square zero kernel $K$.

\begin{pg} Let $DF(\mls O)$ denote the filtered derived category of perfect complexes $E$ equipped with locally finite decreasing filtration $F_E^\bullet $ such that $F_E^i = 0$ for $i<<0$ and $F_E^i = E$ for $i>>0$, and such that each of the graded pieces $\text{gr}^iE$ are perfect complexes (see for example \cite[Chapter V, 3.1]{Illusie}). The category $DF(\mls O)$ is a triangulated category.

There is a forgetful functor
$$
\epsilon :DF(\mls O)\rightarrow D(\mls O), \ \ (E, F_E^\bullet )\mapsto E
$$
and for each $i$ a functor
$$
\text{gr}^i:DF(\mls O)\rightarrow D(\mls O),  \ \ (E, F_E^\bullet )\mapsto \text{gr}^iE.
$$

For $(E, F_E^\bullet )\in DF(A)$ we get by the same construction as in the unfiltered case a trace map
$$
\text{tr}:E\otimes E^\vee \rightarrow \mls O,
$$
where $\mls O$ is viewed as filtered with $F_{\mls O}^i =0$ for $i<0$ and $F_{\mls O}^0 = \mls O$.   

Let $I\in D(\mls O)$ be an object viewed as a filtered object with $F_I^i =0$ for $i<0$ and $F_I^0 =I$.  
\end{pg}

\begin{prop}\label{P:traceadd} For any $u\in \mathrm{Hom}_{DF(\mls O)}(E, E\otimes I)$ we have
$$
\mathrm{tr}(\epsilon (u)) = \sum _i\mathrm{tr}(\mathrm{gr}^i(u))
$$
in $H^0(\BaseSite, I).$
\end{prop}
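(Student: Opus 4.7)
The plan is to reduce, by induction on the length of the locally finite filtration, to the case of a two-step filtration, and then to invoke the classical two-term additivity of the trace for a distinguished triangle of perfect complexes, which is \cite[V, 3.7.7]{Illusie}.

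Since $F_E^\bullet$ is locally finite, after passing to a cover of the final object of $\BaseSite$ we may assume that $\mathrm{gr}^i E = 0$ outside a finite range $a \le i \le b$; this suffices because both sides of the desired identity are sections of the sheaf $I$ in degree $0$, and the trace map is compatible with restriction. The case $b = a$ is immediate, since then $\epsilon(u) = \mathrm{gr}^a(u)$ and there is nothing to prove. Otherwise, the inclusion $F^{a+1} E \hookrightarrow E$ in $DF(\mls O)$ yields a distinguished triangle of perfect complexes in $D(\mls O)$
\[
F^{a+1} E \longrightarrow E \longrightarrow \mathrm{gr}^a E \longrightarrow F^{a+1} E[1].
\]
Because $u$ is a morphism in the filtered derived category it restricts to a morphism $u_{\ge a+1} : F^{a+1} E \to F^{a+1} E \otimes I$ in $DF(\mls O)$ and induces $u_a : \mathrm{gr}^a E \to \mathrm{gr}^a E \otimes I$, with $\mathrm{gr}^i(u_{\ge a+1}) = \mathrm{gr}^i(u)$ for $i \ge a+1$. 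Applying the inductive hypothesis on $b-a$ to $u_{\ge a+1}$ reduces the proposition to the two-term identity
\[
\mathrm{tr}(\epsilon(u)) = \mathrm{tr}(\epsilon(u_{\ge a+1})) + \mathrm{tr}(u_a).
\]

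This two-term additivity is precisely the content of \cite[V, 3.7.7]{Illusie}: locally on $\BaseSite$ one chooses a strict filtered representative of $(E, F_E^\bullet)$ by a bounded complex of finitely generated projectives whose associated graded pieces represent the $\mathrm{gr}^i E$, whereupon $\epsilon(u)$ is presented by a block upper-triangular morphism with diagonal blocks representing $u_{\ge a+1}$ and $u_a$. The identity then collapses to the elementary fact that the trace of a block upper-triangular endomorphism equals the sum of the traces of its diagonal blocks, and globalizes to $H^0(\BaseSite, I)$ by the functoriality of the trace recalled in \cref{S:trace}. The main potential obstacle is the existence of strict filtered representatives of $(E, F_E^\bullet)$, which need not hold on a general site; however, this subtlety does not arise here, since the present section is explicitly restricted to the setting of global resolutions.
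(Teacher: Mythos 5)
The paper's entire proof of this proposition is the single sentence ``This is \cite[V, 3.7.7]{Illusie}'' — Illusie's result already states the multi-term additivity of traces for a morphism in the filtered derived category, in full ringed-topos generality, with a locally finite filtration. Your argument reduces the statement, by induction on filtration length after passing to a cover, to a two-term additivity that you then ascribe to \cite[V, 3.7.7]{Illusie}. If that reference only gave the two-term case your induction would add genuine content, but it does not: the reference already gives the general case, and the paper is invoking it as such (compare the phrasing in \cref{P:history}). So the induction is correct but redundant.

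Two further points are worth flagging. First, your final paragraph does not really \emph{invoke} \cite[V, 3.7.7]{Illusie}; it sketches a proof of it (choose a strict filtered projective representative, observe that block upper-triangular matrices have additive trace). If you want to cite a result as a black box, you should cite and stop; the sketch both duplicates work that is in the reference and, as you yourself note, introduces a representability concern that the reference handles differently. Second, the remark that the strict-representative subtlety ``does not arise here, since the present section is explicitly restricted to the setting of global resolutions'' misreads the structure of the section. It is \cref{P:alternate} and its application to \cref{T:maintheorem} (i) that are restricted to the global-resolution case; \cref{P:traceadd} itself is stated for an arbitrary filtered perfect complex over an arbitrary site, and the cited result of Illusie is proved in that generality (without global resolutions) precisely because the trace is defined intrinsically via $E^\vee \otimes E \to \mls O$ rather than via choice of projective models. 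So the caveat is unnecessary and the justification you give for dismissing it is not the right one.

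In short: the logic of your inductive reduction is sound, but the proposition is simply a quotation of \cite[V, 3.7.7]{Illusie}, and your proof neither shortens the argument nor makes the citation more transparent; it rederives a general result from its own two-term specialization and appends an informal sketch of the reference's proof with a slightly off-target discussion of where strict filtered models are available.
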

\begin{proof}
This is \cite[V, 3.7.7]{Illusie}.
\end{proof}

\begin{pg} 
Gabber's construction of the obstruction to deforming a perfect complex carries through in the filtered context as well.

Let $(E, F_E^\bullet )\in DF(\mls O)$ be an object with each $\text{gr}^iE$ a perfect complex.    Repeating the discussion in \cref{P:7.5}, choosing $G$ to be a filtered complex such that for all $i$ the complex $F_G^i$ is a bounded above acyclic complex of $\mls O'$-modules and
$$
F_G^i\rightarrow F_E^i
$$
is surjective.  Then $S$ is also filtered and the sequence \eqref{E:7.5.1} becomes an exact sequence of filtered complexes.  We then get a class
$$
\tilde o(E, F_E^\bullet )\in \mathrm{Ext}^2_{DF(\mls O)}(E, E\lotimes _{\mls O}K)
$$
mapping to 
$$
o(E)\in \mathrm{Ext}^2_{D(\mls O)}(E, E\lotimes K).
$$

Moreover, by the construction the class
$$
\mathrm{gr}^i(\tilde o(E, F_E^\bullet ))\in \mathrm{Ext}^2_{\mls O}(\mathrm{gr}^iE, \mathrm{gr}^iE\lotimes K)
$$
is equal to the obstruction $o(\mathrm{gr}^iE)$.
\end{pg}

\begin{pg} Now by \cref{P:traceadd} we have
$$
\mathrm{tr}(\epsilon (\tilde o(E))) = \sum \text{tr}(\text{gr}^i(\tilde o(E))).
$$
Furthermore, since each $\text{gr}^i$ is a triangulated functor we have
$$
\text{gr}^i(\tilde o(E)) = o(\text{gr}^iE).
$$
We conclude that
$$
\mathrm{tr}(o(E)) = \sum _i \mathrm{tr}(o(\text{gr}^i(E))).
$$
Now we have
$$
\mathrm{det}(E) = \otimes _i\det(\mathrm{gr}^iE)),
$$
and by \cref{R:7.1.half} we have
$$
o(\mathrm{det}(E)) = \sum _io(\det (\mathrm{gr}^iE)).
$$
We conclude:
\end{pg}

\begin{prop}\label{P:alternate} If \cref{T:maintheorem} (i)  holds for $\mathrm{gr}^iE$ for all $i$ then \cref{T:maintheorem} (i)  holds for $E$.
\end{prop}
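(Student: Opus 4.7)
The plan is essentially to assemble the identities already derived in the two preceding paragraphs.  By hypothesis we have $\mathrm{tr}(o(\mathrm{gr}^iE)) = o(\det(\mathrm{gr}^iE))$ in $H^2(\BaseSite, K)$ for every $i$, so the task reduces to showing that both sides of the desired identity for $E$ split as sums over $i$ of the corresponding quantities for the graded pieces.

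On the trace side, I would invoke the filtered trace additivity formula $\mathrm{tr}(\epsilon (u)) = \sum _i \mathrm{tr}(\mathrm{gr}^i(u))$ of \cref{P:traceadd}, applied to the filtered obstruction class $\tilde o(E, F_E^\bullet )\in \mathrm{Ext}^2_{DF(\mls O)}(E, E\lotimes _{\mls O}K)$ constructed via Gabber's filtered resolution.  Since $\epsilon (\tilde o(E, F_E^\bullet )) = o(E)$ and $\mathrm{gr}^i(\tilde o(E, F_E^\bullet )) = o(\mathrm{gr}^iE)$, this gives $\mathrm{tr}(o(E)) = \sum _i \mathrm{tr}(o(\mathrm{gr}^iE))$.

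On the determinant side, I would use the classical identity $\det (E) = \bigotimes _i \det (\mathrm{gr}^iE)$ in $\mls Pic ^{\mathbf{Z}}_{(\BaseSite, \mls O)}$, together with the additivity of the obstruction for invertible sheaves recorded in \cref{R:7.1.half}, to get $o(\det (E)) = \sum _i o(\det (\mathrm{gr}^iE))$.

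Combining the two displays with the hypothesis applied termwise yields
$$
\mathrm{tr}(o(E)) \;=\; \sum _i \mathrm{tr}(o(\mathrm{gr}^iE)) \;=\; \sum _i o(\det (\mathrm{gr}^iE)) \;=\; o(\det (E)),
$$
which is exactly \cref{T:maintheorem} (i) for $E$.  There is no real obstacle here since all three ingredients have been separately established; the only point requiring a small verification is that the filtered obstruction $\tilde o$ really does recover the unfiltered obstructions on the associated gradeds, which is immediate from the construction in \cref{P:7.5} carried out in the filtered setting (one uses a filtered acyclic resolution whose graded pieces are acyclic resolutions of the $\mathrm{gr}^iE$).
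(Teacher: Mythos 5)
Your proposal is correct and follows essentially the same route as the paper: use Gabber's filtered obstruction together with \cref{P:traceadd} to write $\mathrm{tr}(o(E))$ as $\sum_i\mathrm{tr}(o(\mathrm{gr}^iE))$, apply the hypothesis termwise, and reassemble using $\det(E)=\bigotimes_i\det(\mathrm{gr}^iE)$ and the additivity of the obstruction for line bundles (\cref{R:7.1.half}). The paper's proof is just the displayed chain of equalities, with the supporting observations placed in the surrounding paragraphs rather than inside the proof environment.
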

\begin{proof}
Indeed we have
$$
\mathrm{tr}(o(E)) = \sum _i \mathrm{tr}(o(\text{gr}^i(E))) = \sum _io(\det (\mathrm{gr}^i(E))) = o(\det (E)).
$$
\end{proof}

\begin{rem} In particular, if $E$ is a strictly perfect complex then we can consider the filtration on $E$ whose successive quotients are $E^i$ and conclude that \cref{T:maintheorem} (i) holds by \cref{P:alternate} and the case of locally free sheaves (which is straightforward).
\end{rem}

\begin{appendix}

\section{$\mls D(\mathbf{S}, A_\bullet )$ and dg-modules}\label{A:appendixA}

Let $\Lambda $ be a commutative ring, let $\BaseSite$ be a site, and let $\mls O$ be a sheaf of $\Lambda $-algebras on $\BaseSite$.

For a sheaf of simplicial $\mls O$-algebras $A_\bullet $ on $\BaseSite $ there are different approaches to defining the associated $\infty $-categorical derived category of $A_\bullet $-modules.  For the convenience of the reader we summarize here how one can compare the different approaches.

Let  $\text{Sh} (\mathbf{S}, \mls O)$ denote the category of sheaves of $\mls O$-modules, and let $C(\mathbf{S}, \mls O)$ denote the category of complexes of $\mls O$-modules. 

\subsection{Differential graded modules}

For a strictly commutative differential graded algebra $\mls O$-algebra $B^\bullet $
(see for example \SPcite{061V} and \SPcite{061W}), we can consider its associated category of differential graded modules \SPcite{09JI}, which we will denote by $\mathrm{Mod}^{\dg }_{B^\bullet }$.

There is a forgetful functor
$$
\Sigma :\mathrm{Mod}_{B^\bullet }^\dg \rightarrow C(\BaseSite, \mls O),
$$
  We consider the flat model category structure on $C(\BaseSite, \mls O)$, defined in this generality in \cite[2.1.3]{LZ}.  Since the flat model category structure is monoidal, with respect to the usual tensor product of complexes, we can then use \cite[4.1]{SS} to get a model category structure on $\mathrm{Mod}_{B^\bullet }^\dg $.  

Note that $\mathrm{Mod}_{B^\bullet }^\dg $ is again a $\Lambda $-linear dg-category.  We can therefore apply the construction of \cite[1.3.1.6]{LurieHA} to get an $\infty $-category
$$
\mls D (\BaseSite, B^\bullet ):= N_\dg (\mathrm{Mod}_{B^\bullet }^{\dg, \circ  }),
$$
where $\mathrm{Mod} _{B^\bullet }^{\dg, \circ } \subset \mathrm{Mod}^\dg _{B^\bullet }$ denotes the subcategory of cofibrant-fibrant objects.

\begin{rem} One can show directly that the category $\text{Mod}^{\dg , \circ }_{B^\bullet }$ is pre-triangulated in the sense of \cite{BondalKapranov}.  From this and \cite[4.3.1]{Faonte} it follows that $\mls D_{\dg }(\mathbf{S}, B^\bullet )$ is a stable $\infty $-category.  
\end{rem}

\begin{pg}
The stable $\infty $-category $\mls D(\BaseSite, B^\bullet )$ can also be described using the dg-category of cofibrant objects $\mathrm{Mod}^{\dg, \cof }_{B^\bullet }$ as follows.

The inclusion
$$
\mathrm{Mod}_{B^\bullet }^{\dg, \circ }\hookrightarrow \mathrm{Mod}_{B^\bullet }^{\dg ,\cof}
$$
induces a morphism of $\infty $-categories
\begin{equation}\label{E:2.3.1}
N_\dg (\mathrm{Mod}_{B^\bullet }^{\dg, \circ })\rightarrow N_\dg (\mathrm{Mod}_{B^\bullet }^{\dg, \cof }).
\end{equation}
\end{pg}

\begin{lem}\label{L:A.6} The inclusion \eqref{E:2.3.1} admits a left adjoint in the sense of \cite[5.2.2.1]{LurieHT}.
\end{lem}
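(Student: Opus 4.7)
The plan is to construct an explicit fibrant replacement functor on the cofibrant objects and then invoke a pointwise criterion for the existence of an $\infty$-categorical left adjoint.

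First, using the functorial factorization in the model category $\mathrm{Mod}^{\dg}_{B^\bullet}$ (which exists by the application of \cite[4.1]{SS} to the flat model structure on $C(\BaseSite,\mls O)$), I would factor the map $X\to 0$, for any $X\in \mathrm{Mod}^{\dg,\cof}_{B^\bullet}$, as a trivial cofibration $\eta_X:X\to R(X)$ followed by a fibration $R(X)\to 0$. Since cofibrations are closed under composition, $R(X)$ is cofibrant; since $R(X)\to 0$ is a fibration, $R(X)$ is fibrant. Functoriality of the factorization makes $R$ a dg-functor $\mathrm{Mod}^{\dg,\cof}_{B^\bullet}\to \mathrm{Mod}^{\dg,\circ}_{B^\bullet}$ equipped with a natural trivial cofibration $\eta:\mathrm{id}\Rightarrow i\circ R$.

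Second, I would apply the standard pointwise criterion for the existence of a left adjoint (the dual of \cite[5.2.4.2]{LurieHT}): the inclusion $i$ admits a left adjoint precisely when, for every $X\in N_\dg(\mathrm{Mod}^{\dg,\cof}_{B^\bullet})$, there exists $LX\in N_\dg(\mathrm{Mod}^{\dg,\circ}_{B^\bullet})$ and a morphism $X\to i(LX)$ such that the induced map
$$
\mathrm{Map}_{N_\dg(\mathrm{Mod}^{\dg,\circ}_{B^\bullet})}(LX,Y)\to \mathrm{Map}_{N_\dg(\mathrm{Mod}^{\dg,\cof}_{B^\bullet})}(X,i(Y))
$$
is an equivalence for every cofibrant-fibrant $Y$. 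I would take $LX:=R(X)$ with unit $\eta_X$. By the formula for mapping spaces in a dg-nerve \cite[1.3.1.17]{LurieHA}, both sides are computed as $\DK(\tau_{\leq 0}\mathrm{Hom}^\bullet_{B^\bullet}(-,Y))$ applied to $R(X)$ and $X$ respectively, so the verification reduces to showing that
$$
\mathrm{Hom}^\bullet_{B^\bullet}(R(X),Y)\longrightarrow \mathrm{Hom}^\bullet_{B^\bullet}(X,Y)
$$
is a quasi-isomorphism. This is a Ken Brown-type statement: a weak equivalence between cofibrant objects ($\eta_X:X\to R(X)$) is sent by $\mathrm{Hom}^\bullet(-,Y)$ to a quasi-isomorphism because $Y$ is fibrant, a standard consequence of the fact that on cofibrant objects $\mathrm{Hom}^\bullet(-,Y)$ is a right Quillen functor when $Y$ is fibrant.

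The main obstacle I anticipate is not the construction of $R$, but the verification that the dg-nerve $N_\dg(\mathrm{Mod}^{\dg,\cof}_{B^\bullet})$ really encodes the correct mapping spaces, that is, that $\mathrm{Hom}^\bullet_{B^\bullet}(X,Y)$ computes derived Hom whenever $X$ is cofibrant and $Y$ is cofibrant-fibrant. In the flat model structure this requires a bit of care since not every object is fibrant; however, once one has the standard compatibility between simplicial mapping spaces in a dg model category and the dg-nerve of its cofibrant-fibrant subcategory (in the spirit of \cite[1.3.1.20]{LurieHA}), the invariance under weak equivalences of cofibrant sources is automatic. With this compatibility in hand, the verification of the adjoint criterion is formal and the lemma follows.
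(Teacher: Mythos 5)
Your argument is correct and follows essentially the same strategy as the paper: produce a fibrant replacement $M\to R(M)$ by a trivial cofibration, check that $\mathrm{Hom}^\bullet_{B^\bullet}(R(M),E)\to\mathrm{Hom}^\bullet_{B^\bullet}(M,E)$ is a quasi-isomorphism for each cofibrant-fibrant $E$, and conclude by a pointwise criterion for the existence of a left adjoint. The paper argues this quasi-isomorphism by a shift-to-$H^0$ trick (both sides compute homotopy-category morphisms), while you invoke the SM7/enriched-model-category property of the module structure from \cite[4.1]{SS}; both are valid, and the functorial replacement you construct is unnecessary since the criterion (\cite[5.2.7.8]{LurieHT}, which the paper uses in place of your \cite[5.2.4.2]{LurieHT}) only requires an objectwise localization map.
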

\begin{proof}
Let $M \in \mathrm{Mod}_{B^\bullet }^{\dg, \cof }$ be an object and let $i:M \rightarrow I $ be a trivial cofibration with $I $ fibrant.  Then $I \in \mathrm{Mod}_{B^\bullet }^{\dg, \circ }$ and for any $E\in \mathrm{Mod}_{B^\bullet }^{\dg, \circ }$ the natural map
$$
\mathrm{Hom}^\bullet _{B^\bullet }(I , E)\rightarrow \mathrm{Hom}^\bullet _{B^\bullet }(M , E)
$$
is an equivalence.  Indeed it suffices that this map induces an isomorphism on $H^i$ for each $i$, and by shifting for this it suffices to verify that it holds for $H^0$ where it holds since both sides are calculating morphisms in the homotopy category. It follows that $i$ exhibits $I $ as a localization of $M $ in the sense of \cite[5.2.7.6]{LurieHT}, and therefore by \cite[5.2.7.8]{LurieHT} there exists a left adjoint of the inclusion \eqref{E:2.3.1}.
\end{proof}

\begin{lem} Let $W$ be the collection of equivalences in $\mathrm{N}_{\dg }(\mathrm{Mod}_{B^\bullet }^{\dg, \cof })$ and consider the associated localization $\mathrm{N}_{\dg }(\mathrm{Mod}_{B^\bullet }^{\dg, \cof })[W^{-1}]$ (see \cite[1.3.4.2]{LurieHT} for the existence of this localization).  Then the composition
$$
N_\dg (\mathrm{Mod}_{B^\bullet }^{\dg, \circ })\rightarrow \mathrm{N}_{\dg }(\mathrm{Mod}_{B^\bullet }^{\dg, \cof })\rightarrow \mathrm{N}_{\dg }(\mathrm{Mod}_{B^\bullet }^{\dg, \cof })[W^{-1}]
$$
is an equivalence of $\infty $-categories
$$
N_\dg (\mathrm{Mod}_{B^\bullet }^{\dg, \circ })\rightarrow \mathrm{N}_{\dg }(\mathrm{Mod}_{B^\bullet }^{\dg, \cof} )[W^{-1}].
$$
\end{lem}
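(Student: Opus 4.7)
The plan is to exhibit an explicit inverse to the composition, using the left adjoint $L$ supplied by the preceding lemma (Lemma A.6). Concretely, I would take for $L$ the functor that sends a cofibrant object $M$ to a chosen trivial cofibration $M \hookrightarrow I_M$ with $I_M$ fibrant (so $I_M \in \mathrm{Mod}_{B^\bullet}^{\dg,\circ}$), and argue that this $L$ descends to the localization $N_\dg(\mathrm{Mod}_{B^\bullet}^{\dg,\cof})[W^{-1}]$ and there provides a two-sided inverse to $\iota$.

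First I would verify that $L$ carries each element of $W$ to an equivalence in $N_\dg(\mathrm{Mod}_{B^\bullet}^{\dg,\circ})$. If $f\colon M\to N$ is a weak equivalence of cofibrant objects, then by the adjunction unit we have a commutative square in which the maps $M\to I_M$ and $N\to I_N$ are weak equivalences; by two-out-of-three, $L(f) = I_M\to I_N$ is a weak equivalence between fibrant-cofibrant objects in the flat model structure on $\mathrm{Mod}_{B^\bullet}^{\dg}$. Because the dg-nerve construction of \cite[1.3.1.6]{LurieHA} turns chain-homotopy equivalences into equivalences, and since weak equivalences between cofibrant-fibrant objects in this model structure are chain-homotopy equivalences, the image of $L(f)$ in $N_\dg(\mathrm{Mod}_{B^\bullet}^{\dg,\circ})$ is an equivalence. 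The universal property of the localization \cite[1.3.4.2]{LurieHT} then produces a factorization
$$
\bar L\colon \mathrm{N}_{\dg}(\mathrm{Mod}_{B^\bullet}^{\dg,\cof})[W^{-1}]\longrightarrow N_\dg(\mathrm{Mod}_{B^\bullet}^{\dg,\circ}).
$$

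Next I would check that $\bar L$ is inverse to the functor in the lemma. On the one hand, if $I\in \mathrm{Mod}_{B^\bullet}^{\dg,\circ}$ then the identity of $I$ serves as a trivial cofibration to a fibrant replacement, so $L(\iota(I))\simeq I$ naturally; this gives $\bar L\circ \iota \simeq \mathrm{id}$. On the other hand, for $M\in \mathrm{Mod}_{B^\bullet}^{\dg,\cof}$ the unit $M\to L(M)$ is a trivial cofibration, hence lies in $W$, hence becomes an equivalence in the localization; naturality of this unit exhibits $\iota\circ \bar L\simeq \mathrm{id}$ after passing to the localization.

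The principal technical obstacle is the step identifying weak equivalences between fibrant-cofibrant objects with equivalences in the dg-nerve: one must confirm that in the flat model structure on $\mathrm{Mod}_{B^\bullet}^{\dg}$ (built via \cite[4.1]{SS}), a weak equivalence between cofibrant-fibrant objects is a chain-homotopy equivalence in the dg-sense, and then that $N_\dg$ converts such an equivalence into an equivalence of $\infty$-categories. Both of these facts are standard (the first is a Whitehead-style argument using the lifting properties, the second is essentially built into the construction of $N_\dg$), but they are the pivot on which the argument turns, so I would make them explicit. Everything else is formal manipulation of the reflective localization produced in Lemma A.6.
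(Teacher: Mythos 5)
Your argument is correct and is essentially the paper's proof unwound: the paper simply cites \cite[5.2.7.12]{LurieHT}, which is precisely the abstract statement that a reflective localization (here furnished by Lemma~A.6) agrees with the localization at the class of morphisms inverted by the left adjoint, and your two-sided-inverse construction via fibrant replacement is exactly the content of that proposition specialized to this setting. The Whitehead-style step you flag (weak equivalences of fibrant--cofibrant objects are chain-homotopy equivalences, and $N_{\dg}$ turns these into equivalences) is indeed the model-theoretic input hiding behind that citation, and it holds for the model structure obtained from \cite[4.1]{SS}.
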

\begin{proof}
This follows from \cite[5.2.7.12]{LurieHT}.
\end{proof}

\subsection{Comparison}

 By \cite[1.3.5.3]{LurieHA} there is a model category structure on $C(\mathbf{S}, \mls O)$ in which a morphism $f:M \rightarrow N $ is a weak equivalence (resp. cofibration) if $f$ is a quasi-isomorphism (resp. term-wise injection), and fibrations are defined by the right lifting property with respect to trivial cofibrations.  We refer to this as the \emph{injective model structure}.  

The derived $\infty $-category $\mls D(\mathbf{S}, \mls O)$ is defined in \cite[1.3.5.8]{LurieHA} as the dg-nerve
$$
\mls D(\mathbf{S}, \mls O):= N_{\text{dg}}(C(\mathbf{S}, \mls O)^\circ _{\text{inj}})
$$
of the fibrant-cofibrant objects $C(\mathbf{S}, \mls O)^\circ _{\text{inj}}\subset C(\mathbf{S}, \mls O)$ with respect to the injective model structure.
By \cite[2.1.2.3]{LurieSAG} the $\infty $-category $\mls D(\mathbf{S}, \mls O)$ is identified with the hypercomplete objects in $\text{Mod}_{(\mathbf{S}, \mls O)}$.  By \cite[1.3.5.15]{LurieHA}, and the observation that every object of $C(\mathbf{S}, \mls O)$ is cofibrant, we deduce that
$$
N(C(\mathbf{S}, \mls O))[W^{-1}]\rightarrow \mls D(\mathbf{S}, \mls O)
$$
is an equivalence, where $W$ denotes the set of quasi-isomorphisms.  We therefore obtain an equivalence between $N(C(\mathbf{S}, \mls O))[W^{-1}]$ and the $\infty $-category of hypercomplete objects in $\text{Mod}_{(\mathbf{S}, \mls O)}$.  

By \cite[2.1.4 (1)]{LZ} the identity functor is  a Quillen equivalence between $C(\mathbf{S}, \mls O)$ with the flat model category structure and $C(\mathbf{S}, \mls O)$ with the injective model structure, and therefore by \cite[1.3.4.21]{LurieHA} we can also describe $\mls D(\mathbf{S}, \mls O)$ as 
$$
N(C(\mathbf{S}, \mls O)_{\text{fl}}^\cof )[W^{-1}],
$$
where $C(\mathbf{C}, \mls O)^\cof _{\text{fl}} \subset C(\mathbf{S}, \mls O)$ are the cofibrant objects with respect to the flat model category structure.   

If $A_\bullet $ is a simplicial $\mls O$-algebra with each $A_n$ flat over $\mls O$, then the corresponding differential graded algebra $N(A_\bullet )$ is a cofibrant monoid in the monoidal model category $C(\mathbf{S}, \mls O)$.  Combining this with \cite[4.3.3.17]{LurieHA} taking $B = \mls O$, using \cite[4.3.2.8]{LurieHA}, we find that 
\begin{equation}\label{E:A1}
N(\text{Mod}^{\text{dg}, \cof }_{(\mathbf{S}, N(A_\bullet ))})[W^{-1}]\simeq \text{Mod}_{A_\bullet }(\mls D(\mathbf{S}, \mls O)).
\end{equation}
That is, the $\infty $-category associated to the model category of dg-modules over $N(A_\bullet )$ is equivalent to the $\infty $-category of hypercomplete objects in $\text{Mod}_{(\mathbf{S}, A_\bullet )}$.

In fact, the assumption that the $A_n$'s are flat over $\mls O$ is unnecessary.  If $B_\bullet \rightarrow A_\bullet $ is an equivalence, then it follows from \cite[4.3.2.8]{LurieHA} (applied to the $\infty $-category of $(A_\bullet, B_\bullet )$-bimodules in the category of $B_\bullet $-modules) that the restriction functor
$$
\text{Mod}_{A_\bullet }(\mls D(\mathbf{S}, \mls O))\rightarrow \text{Mod}_{B_\bullet }(\mls D(\mathbf{S}, \mls O))
$$
is an equivalence, and similarly by \cite[4.4]{SS} the restriction
$$
\text{Mod}^{\text{dg}}_{(\mathbf{S}, N(A_\bullet ))}\rightarrow \text{Mod}^{\text{dg}}_{(\mathbf{S}, N(B_\bullet ))}
$$
is a Quillen equivalence.  Thus if $B_\bullet \rightarrow A_\bullet $ is a cofibrant replacement we see that we also have \eqref{E:A1} without assuming that $A_\bullet $ is flat.  Summarizing (using also \cite[1.3.1.17]{LurieHA}):

\begin{thm}\label{A:thm1} Let $A_\bullet $ be a simplicial $\mls O$-algebra with associated differential graded algebra $N(A_\bullet )$.  Then the $\infty $-category $N_\dg (\text{\rm Mod}_{(\mathbf{S}, N(A_\bullet ))}^{\dg, \circ })\simeq N_{\text{\rm dg}}(\text{\rm Mod}_{(S, N(A_\bullet ))}^{\text{\rm dg}, \cof })[W^{-1}]$ is equivalent to the $\infty $-category of hypercomplete objects in $\text{\rm Mod}_{(\mathbf{S}, A_\bullet )}$.
\end{thm}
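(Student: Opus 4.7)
The plan is to build the equivalence in stages: first identify $\mls D(\mathbf{S},\mls O)$ with hypercomplete objects of $\text{Mod}_{(\mathbf{S},\mls O)}$; then move modules over $N(A_\bullet)$ into the picture under the hypothesis that $A_\bullet$ is termwise flat, where standard monoidal model category machinery applies; and finally remove the flatness assumption by cofibrant replacement. Throughout, Lemma~A.6 handles the passage between $N_{\dg}$ of fibrant-cofibrant objects and the localization of $N_{\dg}$ of cofibrant objects.

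First I would invoke \cite[1.3.5.15]{LurieHA}, combined with \cite[2.1.2.3]{LurieSAG}, to identify $\mls D(\mathbf{S},\mls O)$ — defined as the dg-nerve of fibrant-cofibrant objects in the injective model structure on $C(\mathbf{S},\mls O)$ — with hypercomplete objects in $\text{Mod}_{(\mathbf{S},\mls O)}$, and simultaneously with the localization $N(C(\mathbf{S},\mls O))[W^{-1}]$. Using the Quillen equivalence between the injective and flat model structures \cite[2.1.4(1)]{LZ} together with \cite[1.3.4.21]{LurieHA}, I would then re-present $\mls D(\mathbf{S},\mls O)$ as $N(C(\mathbf{S},\mls O)^{\cof}_{\mathrm{fl}})[W^{-1}]$. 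This is the form amenable to comparison with dg-modules.

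Next, under the temporary assumption that each $A_n$ is flat over $\mls O$, the associated dg-algebra $N(A_\bullet)$ is a cofibrant associative monoid in the monoidal model category $(C(\mathbf{S},\mls O)_{\mathrm{fl}},\otimes)$. Applying \cite[4.3.3.17]{LurieHA} with $B=\mls O$ and unwinding via \cite[4.3.2.8]{LurieHA} yields the identification
$$
N(\text{Mod}^{\dg,\cof}_{(\mathbf{S},N(A_\bullet))})[W^{-1}]\simeq \text{Mod}_{A_\bullet}(\mls D(\mathbf{S},\mls O)),
$$
and the right-hand side is by construction the hypercomplete objects in $\text{Mod}_{(\mathbf{S},A_\bullet)}$. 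To remove the flatness hypothesis, I would choose a cofibrant replacement $B_\bullet\to A_\bullet$ of simplicial $\mls O$-algebras (so in particular each $B_n$ is flat); the preceding step applies to $B_\bullet$. Then restriction of scalars along $B_\bullet\to A_\bullet$ is an equivalence on the $\infty$-categorical side by \cite[4.3.2.8]{LurieHA} applied to the $\infty$-category of $(A_\bullet,B_\bullet)$-bimodules in $B_\bullet$-modules, and is a Quillen equivalence on the dg-module side by \cite[4.4]{SS}; passing to $\infty$-categories and combining these yields the conclusion for $A_\bullet$.

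The main obstacle is not any one step of the above chain but the bookkeeping required to make the comparison \eqref{E:A1} functorial enough to survive the reduction from general $A_\bullet$ to a flat replacement. Once one trusts that the Quillen equivalence of \cite[4.4]{SS} matches the restriction-of-scalars equivalence coming from \cite[4.3.2.8]{LurieHA} — which is essentially forced by both constructions computing the same derived functor — the theorem follows by gluing the identifications together and invoking Lemma~A.6 to pass between $N_{\dg}(\text{Mod}^{\dg,\circ}_{(\mathbf{S},N(A_\bullet))})$ and $N_{\dg}(\text{Mod}^{\dg,\cof}_{(\mathbf{S},N(A_\bullet))})[W^{-1}]$.
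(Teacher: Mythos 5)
Your proposal follows the same route as the paper's proof, with the same key ingredients and in the same order: \cite[2.1.2.3]{LurieSAG} and \cite[1.3.5.15]{LurieHA} to identify $\mls D(\mathbf{S},\mls O)$ with hypercomplete objects, the Quillen equivalence \cite[2.1.4(1)]{LZ} with \cite[1.3.4.21]{LurieHA} to pass to the flat model structure, \cite[4.3.3.17]{LurieHA} and \cite[4.3.2.8]{LurieHA} to obtain \eqref{E:A1} when $A_\bullet$ is termwise flat, and cofibrant replacement together with \cite[4.4]{SS} and another application of \cite[4.3.2.8]{LurieHA} to remove the flatness hypothesis, with \cref{L:A.6} handling the passage between $N_\dg(\text{Mod}^{\dg,\circ})$ and $N_\dg(\text{Mod}^{\dg,\cof})[W^{-1}]$. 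This matches the argument in the paper essentially step for step.
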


\qed

\end{appendix}

\providecommand{\bysame}{\leavevmode\hbox to3em{\hrulefill}\thinspace}
\providecommand{\MR}{\relax\ifhmode\unskip\space\fi MR }
\providecommand{\MRhref}[2]{%
  \href{http://www.ams.org/mathscinet-getitem?mr=#1}{#2}
}
\providecommand{\href}[2]{#2}

\end{document}